\newtheorem{thm}{Theorem}[section]
\newtheorem{prop}[thm]{Proposition}
\newtheorem{lem}[thm]{Lemma}
\newtheorem{cor}[thm]{Corollary}
\theoremstyle{remark}
\newtheorem{rem}[thm]{Remark}
\newtheorem{ex}[thm]{Example}
\theoremstyle{definition}
\newtheorem{defn}[thm]{Definition}
\renewcommand{\phi}{\varphi}
\newcommand{\A }{\mathcal A}
\newcommand{\B}{\mathcal B}
\newcommand{\U}{\mathcal U}
\renewcommand{\Re}{\mathrm{Re}}
\newcommand{\Tr}{\mathrm{Tr}}
\renewcommand{\>}{\rangle}
\begin{document}
%\date{\today, Paul 0043 1 58801 10552}
\author[Kallsen]{Jan Kallsen}
\address[Jan Kallsen]{\\
Department of Mathematics\\
Kiel University \\
Westring 383\\
24853 Kiel\\ Germany}
\email[]{kallsen\@@math.uni-kiel.de}
\author[Kr\"uhner]{Paul Kr\"uhner}
\address[Paul Kr\"uhner]{\\
Financial \& Actuarial Mathematics\\
Vienna University of Technology\\
Wiedner Hauptstr. 8/E105-1\\
1040 Vienna\\
Austria}
\email[]{paulkrue\@@fam.tuwien.ac.at}

\title[On unique solutions to martingale problems]{On uniqueness of solutions to martingale problems --- \\ counterexamples and sufficient criteria}

\begin{abstract}
 The dynamics of a Markov process are often specified by its infinitesimal generator or, equivalently, its symbol. This paper contains examples of analytic symbols which do not determine the law of the corresponding Markov process uniquely. These examples also show that the law of a polynomial process in the sense of \cite{cuchiero.11, cuchiero.al.12} is not necessarily determined by its generator.  On the other hand, we show that a combination of smoothness of the symbol and ellipticity warrants uniqueness in law. The proof of this result is based on proving stability of univariate marginals relative to some properly chosen distance.\\

\noindent {\bf Keywords:} symbol, martingale problem, uniqueness, polynomial process, Markov process, pseudo-differen\-tial operator\\

\noindent {\bf MSC (2010) classification:} 47G30, 60J35, 60J75
\end{abstract}
\maketitle

\section{Introduction}
Consider a system whose state at time $t$ is represented by a vector $X(t)$ in $\mathbb R^d$. In applications the dynamics of such a system are often described by specifying how $X(t)$ changes as a function of the current state $X(t)$. In a deterministic setup this is typically expressed in terms of an ordinary differential equation. If, on the other hand, $X(t)$ is random, it may be viewed as a Markov process whose local dynamics can be specified in terms of a stochastic differential equation, its infinitesimal generator, its local semimartingale characteristics, or its symbol. As in the deterministic case, this immediately leads to the question of existence and uniqueness of a stochastic process exhibiting the given local dynamics.

This can be rephrased in terms of existence and uniqueness of the solution to a corresponding martingale problem.
Existence is known to hold under relatively weak continuity conditions, cf.\ e.g.\ \cite[Theorem 4.5.4]{ethier.kurtz.86}, \cite[Theorem 3.15]{hoh.98a}, \cite[Theorem 3.24]{boettcher.al.13},
and Theorem \ref{Satz: Existenz unter Stetigkeit} below.
For continuous processes uniqueness holds for Lipschitz- resp.\ Hölder-continuous coefficients or under some ellipticity condition, cf.\ e.g.\ \cite{revuz.yor.99}. The situation is less obvious for processes with jumps. Lipschitz conditions only help for generators which have a natural representation as an SDE, which often is not the case. Ellipticity, on the other hand, requires a continuous martingale part to be present, which often is not the case either. 

This piece of research is motivated by the desire to come up with a general uniqueness result for Markov processes that may not have a continuous martingale part or a natural representation as a SDE. In this context we share the point of view of \cite{hoh.98a,jacob.schilling.01,boettcher.al.13} that it is natural to study Markov processes through their symbol.
Indeed, e.g.\ weak convergence of a sequence of Levy processes corresponds to pointwise convergence of their symbols.

From the analogy to ODE's one may expect uniqueness to hold if the symbol of the process depends smoothly on the state $X(t)$. Unfortunately, smoothness alone does not seem to suffice in order to warrant uniqueness. In Section \ref{s:examples} we present two examples of even analytic symbols where uniqueness in law of the corresponding Markov process does not hold. This is the first main result of this paper. These examples also show that the law of a polynomial process in the sense of \cite{cuchiero.11, cuchiero.al.12} is not in general uniquely determined by its generator. 

Section \ref{s:exuni} contains a positive result, which is the second main contribution of this paper. It is shown that the combination of sufficient smoothness and ellipticity warrants uniqueness in law. In contrast to \cite[Theorem 4.3]{stroock.75} and related results, the continuous martingale part may vanish. The probably closest relative to our Theorem \ref{Satz: Eindeutigkeit unter Elliptizitaet neu} below is  \cite[Theorem 2.8]{boettcher.05} which also relies on smoothness and ellipticity of the symbol. However, Böttcher requires a certain boundedness for derivatives of any order while we need this condition only for finitely many derivatives. Nevertheless, \cite[Theorem 2.8]{boettcher.05} is not a special case of our Theorem \ref{Satz: Eindeutigkeit unter Elliptizitaet neu} below.
Another closely related result is \cite[Theorem 5.24]{hoh.98a} which, however, requires the symbol to be real.

Uniqueness results have been obtained by a number of different approaches, cf.\ \cite{jacob.schilling.01} for an overview. From a very rough perspective, the most commonly used techniques are
\begin{itemize}
 \item SDE methods where uniqueness is often obtained from fixed-point arguments,
 \item construction of a solution to the backward equation, i.e.\ construction of solutions for the associated abstract Cauchy problem, and
 \item so-called interlacing techniques which allow to add finitely many jumps. 
\end{itemize}
By contrast, our approach is based on establishing stability of the univariate marginals relative to a properly chosen distance. This kind of reasoning seems to be new and it constitutes the third main contribution of this paper.

The paper is structured as follows. In Section \ref{s:symbol} we recall various notions
and properties concerning symbols and martingale problems. Moreover,
we state an existence result which follows from  \cite[Theorem 4.5.4]{ethier.kurtz.86}.
Subsequently, we present examples showing that smoothness of the symbol does not imply uniqueness
of the solution to a martingale problem.
In Section \ref{s:exuni} a uniqueness result under smoothness and some mild ellipticity of the
symbol is stated. Section \ref{proofs} contains proofs.
In the appendix, we recall some facts on complex measures.

\subsection{Notation}
$d\in\mathbb N$ generally denotes the dimension of the space under consideration.
We denote the trace of a matrix $C\in\mathbb R^{d\times d}$ by $\mathrm{Tr}(C)$. For any two vectors $x,y\in\mathbb C^d$ we define the standard bilinear form $xy:=\sum_{j=1}^dx_jy_j$. Moreover, we set $Cx := (\sum_{k=1}^dC_{jk}x_k)_{j=1,\dots,d}$ and $yCx := y(Cx)$ for any matrix $C\in\mathbb C^{d\times d}$ and vectors $x,y\in \mathbb C^d$. The set of positive semidefinite $d\times d$-matrices is denoted by $S^d$. We fix a {\em truncation function} $\chi:\mathbb R^d\rightarrow\mathbb R^d$, i.e.\ $\chi$ is measurable, bounded and it equals the identity in a neighbourhood of zero. W.l.o.g.\ we suppose that $\chi(x)=x$ for $|x|\leq1$, where
$\vert x\vert:=(\sum_{j=1}^d\vert x_j\vert^2)^{1/2}$ denotes the Euclidean norm on $\mathbb R^d$. We write $B(x,r) := \{y\in\mathbb R^d: \vert x-y\vert< r\}$ for the open ball with radius $r>0$ centered at $x\in\mathbb R^d$. We denote the gradient of $f\in C^1(\mathbb R^d,\mathbb C)$ by $\nabla f(x) := (\partial_1f,\dots,\partial_df)$,  $x\in\mathbb R^d$,
the Hessian of $f\in C^2(\mathbb R^d,\mathbb C)$ by $Hf(x):=(\partial_{jk}^2f(x))_{j,k=1}^d$, $x\in\mathbb R^d$, and  the Laplacian of $f\in C^2(\mathbb R^d\times\mathbb R^d,\mathbb C)$ by $\Delta f(x) := \sum_{j=1}^d\partial_j^2f(x)$,  $x\in\mathbb R^d$.  For functions $f:\mathbb R^d\times\mathbb R^d\rightarrow\mathbb C$ we write $\nabla_1f(x,y):=(\nabla f(\cdot,y))(x)$, $x,y\in\mathbb R^d$ if $f$ is differentiable with respect to the first coordinate  and $\Delta_1f(x,y):=(\Delta f(\cdot,y))(x)$, $x,y\in\mathbb R^d$ for sufficiently smooth $f$. 
If $f$ is smooth enough in the second coordinate,$\nabla_2f(x,y)$ and $H_2f(x,y)$ are defined accordingly.
By $\hat C(\mathbb R^d)$ (resp.\ $\hat C(\mathbb R^d,\mathbb C)$) we denote the set of real-valued (resp.\ complex-valued)
continuous functions on $\mathbb R^d$ that vanish in $\infty$.
The greatest integer less or equal $x\in\mathbb R$ is written as $[x]$.
Further unexplained notation is used as in \cite{ethier.kurtz.86, js.87}.

\section{The symbol and the existence theorem}\label{s:symbol}
We start by recalling the notion of the symbol and its associated martingale problem, cf.\  \cite{jacob.schilling.01,boettcher.al.13}.
A systematic theory for symbols was first developed by Hoh \cite{hoh.95,hoh.98,hoh.00}.
Other important references include \cite{jacob.93}, which is more in view of strongly continuous semigroups,
and \cite{bogachev.et.al.99}, who developed a theory for symbols on nuclear separable spaces.

\begin{defn}\label{Definition: Symbol}
\begin{enumerate}
 \item A measurable function $q:\mathbb R^d\times\mathbb R^d\rightarrow\mathbb C$ is a {\em symbol} if $q(x,\cdot)$ is a {\em L\'evy exponent} for all $x\in\mathbb R^d$, i.e.\ there are functions
$b:\mathbb R^d\rightarrow \mathbb R^d$,
$c:\mathbb R^d\rightarrow S^d$, and
$F:\mathbb R^d\times\mathcal B(\mathbb R^d) \rightarrow\mathbb R^d$
such that $F(x,\cdot)$ is a L\'evy measure and
\begin{equation}\label{e:symbol}
q(x,u) = iub(x)-\frac{1}{2}uc(x)u+\int_{}\left( e^{iuy}-1-iu\chi(y) \right)F(x,dy)
\end{equation}
for any $x,u\in\mathbb R^d$. 
We call a symbol $q:\mathbb R^d\times\mathbb R^d\rightarrow\mathbb C$ {\em ($f$-)Hölder continuous} if there is a continuous bounded function $f:\mathbb R^d\rightarrow\mathbb R_+$ such that
$$\vert q(x,u)-q(y,u)\vert\leq f(x-y)(1+\vert u\vert^2),\quad x,y,u\in\mathbb R^d.$$
 \item
If $q$ denotes a symbol,
an adapted c\`adl\`ag $\mathbb R^d$-valued stochastic process $X$ is called {\em solution to the $q$-martingale problem} if the process
$$M_u(t):=\exp(iuX(t))-\int_0^t q(X(s),u)\exp(iuX(s))ds$$
is a local martingale for any $u\in\mathbb R^d$. {\em Uniqueness} for the $q$-martingale problem means that any two solutions $X,Y$ to the $q$-martingale problem with the same initial law (i.e.\ $X(0)$ has the same law as $Y(0)$) have the same distribution. Finally, we say that {\em existence} holds for the $q$-martingale problem if, for any probability measure $\mu$, there is a solution $X$ to the $q$-martingale problem with initial law $P^{X(0)}=\mu$.
\end{enumerate}
\end{defn}
\begin{rem}\label{Bemerkung: triplet}
The functions $b,c$ in  the L\'evy-Khintchine representation (\ref{e:symbol})
are measurable and $F$ is a transition kernel from $\mathbb R^d$ to $\mathbb R^d$,
which can e.g.\ be derived from the construction in \cite[Lemma II.2.44]{js.87}.
As opposed to $c$ and $F$, the drift coefficient $b$ depends on the choice of $\chi$, cf.\ \cite[Theorem 8.1]{sato.99}. 
We call $(b(x),c(x),F(x,\cdot))$  {\em L\'evy-Khintchine triplet} of the L\'evy exponent $q(x,\cdot)$.
\end{rem}

For further use we make the following observation.
\begin{rem}
 Let $q$ be a symbol. A simple Taylor approximation argument shows that 
 for any $x\in\mathbb R^d$ there exists $C_x<\infty$ such that  $\vert q(x,u)\vert \leq C_x(1+\vert u\vert^2)$,
 $u\in\mathbb R^d$. Hence 
 $$ \int_{} \vert f(u)q(x,u)\vert du < \infty $$
 for any Schwartz function $f$  in the sense of 
\cite[Definition 2.2.1]{grafakos.08}
and any $x\in \mathbb R^d$.
\end{rem}
In order to relate a symbol to a martingale problem in the sense of 
\cite[Section 4.3]{ethier.kurtz.86}, we define an operator corresponding to the symbol. 
\begin{defn}\label{Definition: Generator}
 Let $q$ be a symbol. The {\em operator $\A$ associated with $q$} is defined as
$$ \A  f(x) := \int_{}q(x,u)\check{f}(u)e^{iux}du$$
where $x\in\mathbb R^d$, $f$ is any real-valued Schwartz function,
and 
$$\check{f}:\mathbb R^d\rightarrow\mathbb C,\quad u\mapsto\frac{1}{(2\pi)^d}\int_{}f(x)e^{-iux}dx,$$ 
denotes the inverse Fourier transform of $f$.
\end{defn}
This operator can be expressed in terms of the L\'evy-Khintchine triplet.
\begin{lem}\label{l:Trippel Darstellung}
 Let $q$ be a symbol, $\A$ the operator associated with $q$, and $(b,c,F)$ the triplet of $q$. 
 Then
 $$\A f(x) = \nabla f(x)b(x) + \frac{1}{2}\mathrm{Tr}(Hf(x)c(x)) + \int_{} \left(f(x+y)-f(x)-\nabla f(x)\chi(y)\right)F(x,dy)$$
 for any real-valued Schwartz function $f$ and any $x\in\mathbb R^d$.
\end{lem}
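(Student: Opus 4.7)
The plan is to substitute the Lévy--Khintchine representation \eqref{e:symbol} of $q(x,u)$ into the defining integral $\A f(x)=\int q(x,u)\check f(u)e^{iux}\,du$, split the result into the drift, Gaussian, and jump pieces, and identify each piece via Fourier inversion. Since $f$ is Schwartz, so is $\check f$, and $f$ can be recovered by $f(z)=\int\check f(u)e^{iuz}\,du$; moreover, differentiation corresponds to multiplication by $iu$ on the Fourier side, i.e.\ $\partial_j f(x)=\int iu_j\check f(u)e^{iux}\,du$ and $\partial^2_{jk}f(x)=-\int u_ju_k\check f(u)e^{iux}\,du$.

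First I would treat the linear and quadratic terms. The contribution $\int iub(x)\check f(u)e^{iux}\,du$ equals $\nabla f(x)b(x)$ by the Fourier formula for the gradient. For the Gaussian piece, using the symmetry of $c(x)$,
$$-\tfrac12\int uc(x)u\,\check f(u)e^{iux}\,du=-\tfrac12\sum_{j,k}c_{jk}(x)\int u_ju_k\check f(u)e^{iux}\,du=\tfrac12\sum_{j,k}c_{jk}(x)\partial^2_{jk}f(x),$$
which is $\tfrac12\mathrm{Tr}(Hf(x)c(x))$. These two steps are pure Fourier calculus and present no difficulty because $\check f$ decays faster than any polynomial.

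The main obstacle, and the only genuine work, is the jump term
$$\int\left[\int\bigl(e^{iuy}-1-iu\chi(y)\bigr)F(x,dy)\right]\check f(u)e^{iux}\,du,$$
where I would like to swap the two integrals and then evaluate the inner Fourier integral pointwise in $y$ to obtain $f(x+y)-f(x)-\nabla f(x)\chi(y)$. Fubini must be justified. I would split the $y$-integration into $\{|y|\le 1\}$ and $\{|y|>1\}$. On $\{|y|\le1\}$ one has $\chi(y)=y$ and a second-order Taylor estimate gives $|e^{iuy}-1-iuy|\le\tfrac12|u|^2|y|^2$; on $\{|y|>1\}$ one simply uses $|e^{iuy}-1-iu\chi(y)|\le 2+\|\chi\|_\infty|u|$. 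Since $F(x,\cdot)$ is a Lévy measure, $\int(1\wedge|y|^2)F(x,dy)<\infty$, and since $\check f$ is Schwartz, $\int(1+|u|^2)|\check f(u)|\,du<\infty$. Multiplying the two finite quantities yields absolute integrability of the double integral, so Fubini applies.

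Finally I would interchange the integrals and compute
$$\int\bigl(e^{iu(x+y)}-e^{iux}-iu\chi(y)e^{iux}\bigr)\check f(u)\,du = f(x+y)-f(x)-\nabla f(x)\chi(y)$$
using $f(z)=\int\check f(u)e^{iuz}\,du$ for $z\in\{x,x+y\}$ and $\int iu\,\check f(u)e^{iux}\,du=\nabla f(x)$. Summing the three contributions gives the claimed identity. The only subtlety beyond bookkeeping is the Fubini justification just described; everything else follows from standard Fourier inversion for Schwartz functions.
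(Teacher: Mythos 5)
Your argument is correct and is essentially the route the paper takes: the paper's proof consists of citing the standard Fourier-transform identities for Schwartz functions from Grafakos, which is precisely the calculus you carry out explicitly (derivatives corresponding to multiplication by $iu$, plus Fourier inversion under the jump integral). Your Fubini justification via the splitting at $|y|=1$ is the right way to make the compressed citation rigorous, so nothing is missing.
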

\begin{proof}
 This follows from \cite[Proposition 2.3.22]{grafakos.08}.
\end{proof}

 Let $q$ be a symbol with associated operator $\A $. Moreover, denote by $\mathcal B$ the restriction of $\A $ to the set of real-valued Schwartz functions $f$ such that $\A f$ is bounded. The following lemma shows that any solution $X$ to the $q$-martingale problem is a solution to the martingale problem in the sense of \cite[Section 4.3]{ethier.kurtz.86} for $\mathcal B$. Under suitable conditions the converse is also true, cf.\ Theorem \ref{Satz: Existenz unter Stetigkeit}(2) below.
\begin{lem}\label{l:paulneu2}
  Let $X$ be a solution to the $q$-martingale problem,
  $f$ a real-valued Schwartz function such that $\mathcal Af$ is bounded, and
   $$ M_f(t) := f(X(t)) - \int_0^t \mathcal Af(X(s)) ds $$
  for any $t\geq 0$. Then $M_f$ is a martingale.
\end{lem}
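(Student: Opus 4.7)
The plan is to realize $M_f$ as a continuous superposition of the local martingales $\{M_u\}_{u\in\mathbb R^d}$ arising from the definition of a solution to the $q$-martingale problem, via Fourier inversion. For a Schwartz function $f$ the inversion formula yields
$$f(x) = \int_{\mathbb R^d}\check f(u)\,e^{iux}\,du,\qquad \A f(x) = \int_{\mathbb R^d}\check f(u)\,q(x,u)\,e^{iux}\,du,$$
the second by Definition~\ref{Definition: Generator}. Applying the first at $x=X(t)$ and integrating the second over $[0,t]$, one should arrive, after a pathwise Fubini interchange, at the almost sure identity
$$M_f(t) = \int_{\mathbb R^d}\check f(u)\,M_u(t)\,du.$$

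The first main step is to justify this Fubini interchange. The pointwise bound $|q(x,u)|\le C_x(1+|u|^2)$ from the preceding remark together with the rapid decay of $\check f$ reduces the question to controlling $C_{X(s)}$ uniformly in $s\in[0,t]$ for $\omega$ fixed; I would do this via a preliminary localization along $\tau_n:=\inf\{t:|X(t)|\ge n\}$, which confines $X$ to a bounded set on which the relevant constants can be taken uniform.

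For the martingale property, fix $s\le t$ and $A\in\mathcal F_s$; a second Fubini gives
$$E\bigl[(M_f(t)-M_f(s))\mathbf 1_A\bigr] = \int_{\mathbb R^d}\check f(u)\,E\bigl[(M_u(t)-M_u(s))\mathbf 1_A\bigr]\,du,$$
so it suffices to show the integrand vanishes for each $u$. This follows by stopping the local martingale $M_u$ at $\tau_n$, applying optional sampling to the stopped (and, on $[0,T]$, bounded) process, and then letting $n\to\infty$ by dominated convergence. The dominant and the integrability of $M_f(t)$ are both supplied by the standing hypotheses: since $f$ is Schwartz and $\A f$ is bounded, one has $|M_f(t)|\le\|f\|_\infty+t\,\|\A f\|_\infty$, so $M_f$ is bounded on each $[0,T]$ and is automatically a true martingale once the local martingale property is in hand.

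The main obstacle is executing both Fubini interchanges and the local-to-true martingale passage uniformly in $u$. The constant $C_x$ is only pointwise finite in $x$, and the boundedness of $\A f$ is precisely the hypothesis that compensates for this lack of a uniform bound once we integrate out $u$.
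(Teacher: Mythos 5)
Your route — realizing $M_f$ as the Fourier superposition $\int\check f(u)\,M_u\,du$ of the exponential local martingales — is genuinely different from the paper's, which instead invokes Proposition \ref{Satz: Semimartingaleigenschaft} (so that $X$ is a semimartingale with characteristics $(b(X_-),c(X_-),F(X_-,\cdot))$), computes the local characteristics of $f(X)$ by Itô's formula, concludes via \cite[Theorem II.2.42]{js.87} that $M_f$ is a local martingale, and then uses $|M_f(t)|\le\|f\|_\infty+t\|\A f\|_\infty$ to upgrade to a true martingale. Your idea is the natural converse of Step 4 in the proof of Theorem \ref{Satz: Existenz unter Stetigkeit}, and it can be made to work, but not with the justifications you give.

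The gap is in how you propose to control the interchanges. The lemma assumes only that $q$ is a symbol, i.e.\ measurable in $x$; the constant $C_x$ in $|q(x,u)|\le C_x(1+|u|^2)$ is comparable to $g_0(x):=|b(x)|+\Tr(c(x))+\int(1\wedge|y|^2)F(x,dy)$ and is merely pointwise finite, not locally bounded. Hence stopping at $\tau_n=\inf\{t:|X(t)|\ge n\}$ does \emph{not} make "the relevant constants uniform", and the stopped process $M_u^{\tau_n}$ is \emph{not} bounded; both claims fail already here. Likewise, boundedness of $\A f(x)=\int q(x,u)\check f(u)e^{iux}\,du$ is a statement about an oscillatory integral and does not dominate $\int|\check f(u)|\,|q(x,u)|\,du$, so it supplies neither the pathwise Fubini bound nor an integrable dominant for exchanging $E$ with $\int du$. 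Finally, even granting the pathwise identity, you need $E[(M_u(t)-M_u(s))1_A]=0$ for each $u$, i.e.\ that each $M_u$ is a true martingale, which is not part of the hypotheses. The repair is to first establish (exactly as the paper does, via the semimartingale property) that $A(t):=\int_0^t g_0(X(s))\,ds<\infty$ a.s., and then localize along $\sigma_n:=\inf\{t:A(t)\ge n\}$ rather than $\tau_n$: on $[\![0,\sigma_n]\!]$ one has $|M_u|\le 1+\tilde C(1+|u|^2)n$ uniformly, all Fubini interchanges and optional sampling go through, one obtains that $M_f$ is a local martingale with localizing sequence $(\sigma_n)$, and only then does the bound $\|f\|_\infty+t\|\A f\|_\infty$ finish the argument. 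Without first securing the a.s.\ finiteness of $A(t)$ — which is where the semimartingale machinery enters — the superposition argument does not close.
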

\begin{proof}
  Theorem \ref{Satz: Semimartingaleigenschaft} states that $X$ is a semimartingale with local characteristics 
  $(b(X_-),$ $c(X_-),F(X_-,\cdot))$. %hier
  Thus It\=o's formula for the local characteristics \cite[Proposition 2.5]{kallsen.04} together with Lemma \ref{l:Trippel Darstellung} yield that
    \begin{align*}
        G(t,A) &:= \int1_A(f(X_{t-}+x)-f(X_{t-})) F_t(dx),\quad A\in\B\mbox{ with }0\notin A, \\
        \gamma(t) &:=  \nabla f(X(t-)) c(X(t-)) \nabla f(X(t-)), \\
        \beta(t) &:= \mathcal A f(X(t-)) + \int (h(y) -y) G(t,dy),
    \end{align*}
    is a version of the local characteristics of $f(X)$ relative to the truncation function $h$. 
    By \cite[Theorem II.2.42]{js.87}
     $$ f(X(t)) - \int_0^t \mathcal Af(X(s))ds $$
    is a local martingale. However, $M_f$ is bounded on compact time intervals and hence it is a martingale.
\end{proof}

We now state an essentially  well-known existence result which follows from \cite[Theorem 4.5.4]{ethier.kurtz.86},
cf.\ also \cite[Theorem 3.15]{hoh.98a} or  \cite[Theorem 3.24]{boettcher.al.13}. 
\begin{thm}[Existence]\label{Satz: Existenz unter Stetigkeit}
 Let $q$ be a continuous symbol with associated operator $\A $
 and triplet $(b,c,F)$.
 Assume that 
$$g:\mathbb R^d\rightarrow\mathbb R\cup\{\infty\},\quad x\mapsto\vert b(x)\vert +\Tr(c(x)) +\int_{}\vert y\vert^2 F(x,dy)$$
is bounded by some finite constant.
Then the following statements hold.
\begin{enumerate}
 \item For any probability measure $\mu$ on $\mathbb R^d$ there is a solution $X$ to the $q$-martingale problem with $P^{X(0)}=\mu$. 
 \item A stochastic process $X$ is a solution to the $q$-martingale problem if and only if
 $$ M_f(t) := f(X(t)) - \int_0^t\A f(X(s))ds,\quad t\in\mathbb R_+$$
 defines a martingale for any real-valued Schwartz function $f$ (or, equivalently, any
 smooth function with compact support),
 i.e.\ if and only if $X$ is a solution to the martingale problem $\A$ in the sense of
 \cite[Section 4.3]{ethier.kurtz.86}.
 \item
The operator $\A $ has the following properties:
\begin{enumerate}
 \item its range is contained in $\hat C(\mathbb R^d)$,
 \item it satisfies the positive maximum principle in the sense of \cite[p.~165]{ethier.kurtz.86}, 
 i.e.\ 
 $$0\leq f(x_0)=\sup_{x\in\mathbb R^d}f(x)$$ implies $\A f(x_0)\leq0$ 
 for any real-valued Schwartz function $f$ and any $x_0\in\mathbb R^d$, and
 \item it is conservative, i.e.\ there is a bounded sequence of real-valued Schwartz functions $(f_n)_{n\in\mathbb N}$ which converges pointwise to $1$ such that $(\A f_n)_{n\in\mathbb N}$ is a bounded sequence which converges pointwise to $0$.
\end{enumerate}
\item
It is possible to choose measures $(P_x)_{x\in\mathbb R^d}$ on the Skorokhod space such that the canonical process $X$ is a solution to the $q$-martingale problem with $X(0) = x$ a.s.\ under $P_x$, $x\in\mathbb R^d$ and such that $x\mapsto P_x(X(t)\in A)$ is measurable for any $t\geq 0$ and any Borel set $A\subset\mathbb R^d$. Moreover, $(X,(P_x)_{x\in\mathbb R^d})$ is strong Markov. 
\item If the $q$-martingale problem has several solutions for some initial law $P^{X(0)}=\mu$, then there are several families of measures $(P_x)_{x\in\mathbb R^d}$ as in (4).
\item
Finally,
 $$ q(x,u) = \lim_{t\downarrow0} \frac{E_x(e^{iu(X(t)-x)})-1}{t},\quad x,u\in\mathbb R^d, $$
\end{enumerate}
where $E_x$ denotes expectation relative to $P_x$.
\end{thm}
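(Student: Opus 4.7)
The strategy is to apply \cite[Theorem 4.5.4]{ethier.kurtz.86} to the operator $\A$. This requires verifying the three conditions (3a)--(3c) on $\A$, which together form item (3); these in turn unlock items (1), (2), (4), and (5), while item (6) follows by direct computation.

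For part (3), I would work throughout with the triplet representation from Lemma \ref{l:Trippel Darstellung}. The elementary bound
\[
|f(x+y)-f(x)-\nabla f(x)\chi(y)|\le \frac{1}{2}\|Hf\|_\infty(|y|^2\wedge 1) + 2\|f\|_\infty\mathbf{1}_{\{|y|>1\}},
\]
valid for any Schwartz $f$, combined with the uniform bound on $g$, provides a uniform-in-$x$ envelope for the jump term, hence boundedness of $\A f$. Continuity of $\A f$ follows by dominated convergence from continuity of $q$, and the Schwartz decay of $f$, $\nabla f$, $Hf$ then forces $\A f\in\hat C(\mathbb R^d)$; this is (3a). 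Property (3b) is almost tautological: at a non-negative global maximum $x_0$ of $f$ one has $\nabla f(x_0)=0$, $Hf(x_0)$ negative semidefinite, and $f(x_0+y)\le f(x_0)$, so every summand in $\A f(x_0)$ is nonpositive. For (3c), I would take $f_n(x):=\varphi(x/n)$ for a Schwartz $\varphi\equiv 1$ in a neighbourhood of zero: then $\|\nabla f_n\|_\infty=O(n^{-1})$, $\|Hf_n\|_\infty=O(n^{-2})$, and the same envelope as above controls the jump integral, so $\A f_n\to 0$ pointwise boundedly.

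With (3) in hand, \cite[Theorem 4.5.4]{ethier.kurtz.86} produces, for every initial law, a solution to the martingale problem for $\A$ in the sense of \cite[Section 4.3]{ethier.kurtz.86}, together with a measurable strong-Markov family $(P_x)$ via the selection results in the same section; this gives (4), and (5) is its contrapositive. Statement (1) and the ``$\Leftarrow$'' direction of (2) are then mutually intertwined. The ``$\Rightarrow$'' direction of (2) is immediate from Lemma \ref{l:paulneu2} since (3a) makes every $\A f$ bounded for Schwartz $f$. The ``$\Leftarrow$'' direction is the main technical obstacle, because the test function $e^{iu\cdot}$ is not Schwartz; I would approximate it by $e^{iu\cdot}\varphi_n$ with a Schwartz cutoff sequence $\varphi_n\uparrow 1$ having locally uniformly bounded derivatives. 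A dominated-convergence argument applied to the triplet formula for $\A(e^{iu\cdot}\varphi_n)$ shows that $\A(e^{iu\cdot}\varphi_n)(x)\to q(x,u)e^{iux}$ boundedly, and bounded convergence inside the martingale identity for the real and imaginary parts then identifies $M_u$ as a local (in fact true) martingale.

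Finally, for (6), both ingredients of $M_u$ are bounded on compact time intervals ($|e^{iu\cdot}|=1$, and $|q(\cdot,u)|$ is bounded uniformly in the first variable by the boundedness of $g$), so $M_u$ is a true martingale. Taking expectations under $P_x$ yields
\[
\frac{E_x(e^{iu(X(t)-x)})-1}{t}=\frac{1}{t}\int_0^t e^{-iux}E_x\bigl(q(X(s),u)e^{iuX(s)}\bigr)\,ds,
\]
and the right-hand side converges to $q(x,u)$ as $t\downarrow 0$ by right-continuity of $X$ at $0$, continuity of $q$ in its first variable, and dominated convergence.
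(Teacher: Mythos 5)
Your treatment of items (1), (2), (3) and (6) follows essentially the same route as the paper: the envelope bound on the jump term plus boundedness of $g$ for (3a), the tautological argument for (3b), the rescaled cutoffs $\varphi(\cdot/n)$ for (3c), the cutoff approximation $e^{iu\cdot}\varphi_n$ of the non-Schwartz test function for the equivalence in (2), and taking expectations in the martingale identity for (6). One small caveat on (3a): dominated convergence does not apply directly in the triplet representation to prove continuity of $\A f$, because the measure $F(x,dy)$ itself varies with $x$; you need either to pass to the Fourier-integral form $\A f(x)=\int q(x,u)\check f(u)e^{iux}\,du$ (where the uniform bound $|q(x,u)|\le C(1+|u|^2)$ coming from boundedness of $g$ does give a dominating function), or to invoke the weak continuity of $x\mapsto(b(x),\tilde c(x),F(x,\cdot))$ as the paper does via \cite[Theorem VII.2.9]{js.87}.

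The genuine gap is in items (4) and (5). First, the selection theorem you appeal to (\cite[Theorem 4.5.19]{ethier.kurtz.86}) does not follow from \cite[Theorem 4.5.4]{ethier.kurtz.86} alone; its hypotheses are verified through \cite[Theorem 4.5.11(b)]{ethier.kurtz.86}, which requires a compact-containment estimate of the form \cite[(4.5.33)]{ethier.kurtz.86}. The paper obtains this from a second-moment bound on solutions (Lemma \ref{l:zweites Moment}, proved via the canonical decomposition and Doob's inequality) together with Chebyshev's inequality; your proposal supplies nothing in its place. Second, statement (5) is emphatically not ``the contrapositive'' of (4): (4) asserts existence of one measurable strong Markov family, while (5) asserts that non-uniqueness of the martingale problem forces the existence of \emph{two distinct} such families. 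The paper's argument (Step 3 of Lemma \ref{l:paulneu}) constructs two distinct maximal dissipative extensions $\A\neq\B$ of the operator, picks $(f,g)\in\B\setminus\A$, and shows by a maximality/dissipativity contradiction that $M(t)=f(X(t))-\int_0^t g(X(s))\,ds$ cannot be a $P_x$-martingale for every $x$, whereas it is a $Q_x$-martingale for the family built from $\B$; hence $P_x\neq Q_x$ for some $x$. Without this (or an equivalent) argument, item (5) is unproved. A further, smaller omission: the strong Markov property is delivered by \cite[Theorem 4.5.19]{ethier.kurtz.86} only for bounded stopping times, and extending it to finite stopping times requires the $L^2$-limiting argument of the paper, as does the measurability of $x\mapsto P_x(X(t)\in A)$.
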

The proof is to be found in Section \ref{Abschnitt: Bew. Existenzsatz}.

\begin{rem}
\begin{enumerate}
 \item Provided that uniqueness holds, the boundedness condition in Theorem \ref{Satz: Existenz unter Stetigkeit} can be relaxed by localisation, i.e.\ by applying \cite[Theorems 4.6.2 or 4.6.3]{ethier.kurtz.86}.
 \item For his related existence result \cite[Theorem 3.15]{hoh.98a} 
 (cf.\ also \cite[Theorem 3.24]{boettcher.al.13}),
 Hoh requires slightly weaker conditions. However, he focuses on statement (1),
 i.e., the existence of a Markovian solution is not considered.
 \item Statement (6) means that $-q$ is a symbol in the sense of \cite{jacob.schilling.01}.
\end{enumerate}
\end{rem}

The assumption on the triplet in Theorem \ref{Satz: Existenz unter Stetigkeit} can be replaced 
by a smoothness condition on the symbol:
\begin{cor}\label{co:exuntermomenten}
 Let $q$ be a continuous symbol such that $u\mapsto q(x,u)$ is twice differentiable
 with bounded gradient $x\mapsto \nabla_2q(x,0)$ and bounded Hessian $x\mapsto H_2q(x,0)$.
Then statements (1-6) in Theorem \ref{Satz: Existenz unter Stetigkeit} hold.
\end{cor}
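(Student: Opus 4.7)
The strategy is to reduce the corollary to Theorem~\ref{Satz: Existenz unter Stetigkeit}, whose only additional hypothesis is the boundedness of
$$g(x) = |b(x)| + \Tr(c(x)) + \int |y|^2 F(x, dy).$$
Everything thus amounts to reading off $b(x)$, $c(x)$ and the second moment of $F(x, \cdot)$ from the values of $\nabla_2 q(x, 0)$ and $H_2 q(x, 0)$, which by assumption are bounded uniformly in $x$.

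The first and main step is to show that twice differentiability of $q(x, \cdot)$ at $u = 0$ already forces $F(x, \cdot)$ to have a finite second moment. Inserting the Lévy-Khintchine form~(\ref{e:symbol}) into the symmetric second difference in the $u_j$-direction and using $q(x, 0) = 0$ yields
$$\frac{q(x, h e_j) + q(x, -h e_j)}{h^2} = -c_{jj}(x) - \int \frac{2 - 2\cos(h y_j)}{h^2} F(x, dy),$$
the drift contribution cancelling by symmetry. The integrand is nonnegative and converges pointwise to $y_j^2$ as $h \downarrow 0$, while the left-hand side tends to $\partial^2_{u_j} q(x, 0)$ by twice differentiability at the origin. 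Fatou's lemma therefore gives
$$\int y_j^2 F(x, dy) \leq -c_{jj}(x) - \partial^2_{u_j} q(x, 0) < \infty.$$
Dominated convergence (using $(2 - 2\cos(h y_j))/h^2 \leq y_j^2$ and the cross-term bound $|y_j y_k| \leq |y|^2$) then justifies differentiating twice under the integral in~(\ref{e:symbol}) and yields the identification
$$H_2 q(x, 0) = -c(x) - \int y y^\top F(x, dy).$$

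Knowing $\int |y|^2 F(x, dy) < \infty$, one may differentiate once under the integral as well to obtain
$$\nabla_2 q(x, 0) = i\left( b(x) + \int (y - \chi(y)) F(x, dy) \right).$$
Since $y - \chi(y)$ vanishes on $\{|y| \leq 1\}$ and $|y - \chi(y)| \leq (1 + \|\chi\|_\infty) |y|^2$ on $\{|y| > 1\}$, the correction term is dominated by a constant multiple of $\int |y|^2 F(x, dy)$.

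Combining the two identifications, boundedness of $H_2 q(\cdot, 0)$ (via its trace) gives boundedness of $\Tr(c(x)) + \int |y|^2 F(x, dy)$, and together with boundedness of $\nabla_2 q(\cdot, 0)$ this yields boundedness of $|b(x)|$. Hence $g$ is bounded and Theorem~\ref{Satz: Existenz unter Stetigkeit} applies, delivering statements (1)--(6). The only genuinely delicate point is the Fatou step: one must exploit the sign of $(2 - 2\cos(h y_j))/h^2$ to pass from a pointwise regularity property of $q$ at the origin to an integrability property of $F(x, \cdot)$ without any a priori moment assumption.
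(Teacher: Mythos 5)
Your proof is correct and follows the same overall architecture as the paper's: read off $\Tr(c(x))+\int|y|^2F(x,dy)$ and $b(x)$ from $H_2q(x,0)$ and $\nabla_2q(x,0)$, conclude that $g$ is bounded, and invoke Theorem \ref{Satz: Existenz unter Stetigkeit}. Where you genuinely diverge is at the step you yourself flag as delicate, namely deducing $\int|y|^2F(x,dy)<\infty$ from twice differentiability at the origin. The paper splits $F(x,\cdot)$ into the part $\overline F_x$ carried by the closed unit ball (which has finite second moment automatically, being part of a L\'evy measure) and the finite large-jump part $F_x$, shows that the Fourier transform $\hat F_x$ is twice differentiable at $0$ by subtracting the explicitly controllable terms from $q(x,\cdot)$, and then cites \cite[Lemma A.1]{duffie.al.03} to conclude that the finite measure $F_x$ has finite second moments. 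You instead work with the full L\'evy measure at once: the symmetric second difference kills the odd (drift and compensator) terms, leaves the nonnegative kernel $(2-2\cos(hy_j))/h^2$, and Fatou's lemma does the rest. This is in effect an inlined, self-contained proof of the cited lemma, arranged so that no small-jump/large-jump decomposition is needed, since the nonnegativity of the kernel holds on all of $\mathbb R^d$; the paper's version is shorter by citation but imports an external result and an extra decomposition. Both arguments then finish identically by dominated convergence, using $(2-2\cos(hy_j))/h^2\le y_j^2$ and $|y_jy_k|\le|y|^2$ for the Hessian identity and the bound of $|y-\chi(y)|$ by a constant multiple of $|y|^2$ off the unit ball for the gradient identity. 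I see no gap in your argument.
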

\begin{proof}
  W.l.o.g.\ $|\chi(y)|=0$ for $|y|>1$.
  Fix $x\in\mathbb R^d$ and define the finite measure 
 $F_x(A):=F(x,A\setminus \overline{B(0,1)})$, $A\in\mathcal B(\mathbb R^d)$
  as well as $\overline F_x:=F(x,\cdot)-F_x$.
 We denote the Fourier transform of $F_x$ by
  $$ \hat F_x(u) := \int e^{iuy} F_x(dy),\quad u\in\mathbb R^d. $$
 Observe that
$$   \hat F_x(u)
= q(x,u)-iub(x)+{1\over2}uc(x)u-\int\big(e^ {iuy}-1-iuy\big)\overline F_x(dy)+\hat F_x(0).$$
 Dominated convergence and $\int |y|^2  \overline F_x(dy)<\infty $ yield that 
$\hat F_x$ is twice differentiable in $0$.
 By \cite[Lemma A.1]{duffie.al.03}, this implies that  $F_x$ has finite second moments given by
 $\int y_j^2  F_x(dy) =-\hat F_x''(0)$, $j=1,\dots,d$. 
 Again by dominated convergence we obtain
\begin{align*}
      (H_2q(x,0))_{jj} &= - c(x)_{jj} - \int y_j^2 \overline F_x(dy)- \int y_j^2 F_x(dy)\\
& =- c(x)_{jj} - \int y_j^2 F(x,dy),\quad j=1,\dots,d.
\end{align*}
   Boundedness of $H_2q(\cdot,0)$ now yields that $\mathrm{Tr}(c(\cdot))$ and $\int \vert y\vert^2 F(\cdot,dy)$ are bounded as well.
 
   Once more from dominated convergence we conclude
    \begin{align*}
      \nabla_2q(x,0) = i\left( b(x) + \int (y-\chi(y)) F(x,dy) \right).
    \end{align*}
   Since  $\int (y-\chi(y)) F(\cdot,dy)$ is a bounded function, $b$ is bounded as well. 
   Theorem \ref{Satz: Existenz unter Stetigkeit} now yields the assertion.
\end{proof}

Next we restate a result by van Casteren which shows that uniqueness implies the Feller property.
\begin{prop}[Feller property]\label{p:feller}
  Let $q$ be a symbol satisfying the requirements of Theorem \ref{Satz: Existenz unter Stetigkeit}. If uniqueness holds for the $q$-martingale problem, there is a closed extension $\mathcal C$ of $\A $ which generates a strongly continuous positivity preserving contraction semigroup on $\hat C(\mathbb R^d)$. In other words, any solution to the martingale problem $\A $ is a Feller process.
\end{prop}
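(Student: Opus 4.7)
The plan is to read Proposition \ref{p:feller} as a direct application of van Casteren's abstract characterization of Feller generators via well-posed martingale problems. All the structural ingredients required by such a theorem --- namely a densely defined operator on $\hat C(\mathbb R^d)$ whose range lies in $\hat C(\mathbb R^d)$, which satisfies the positive maximum principle, is conservative, and is associated with a well-posed martingale problem --- are already furnished by Theorem \ref{Satz: Existenz unter Stetigkeit} together with the standing uniqueness assumption, so the proof will reduce to a verification step plus the citation.

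Concretely, I would proceed as follows. Working with the operator $\mathcal B$ (the restriction of $\mathcal A$ to real-valued Schwartz functions $f$ for which $\mathcal A f$ is bounded, so that the range lands in $\hat C(\mathbb R^d)$ as a subspace of bounded continuous functions), I would verify the hypotheses of van Casteren in turn: the domain is dense in $\hat C(\mathbb R^d)$ since $C_c^\infty(\mathbb R^d)$ already is, and for every such $f$ the function $\mathcal A f$ is bounded by the local boundedness of the triplet guaranteed under the assumptions of Theorem \ref{Satz: Existenz unter Stetigkeit}; the range lies in $\hat C(\mathbb R^d)$ by part (3)(a); the positive maximum principle holds by (3)(b); and conservativity holds by (3)(c). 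Existence of a solution for any initial law is part (1), uniqueness is the standing hypothesis of the proposition, and Lemma \ref{l:paulneu2} together with Theorem \ref{Satz: Existenz unter Stetigkeit}(2) identifies the $q$-martingale problem with the martingale problem for $\mathcal A$ in the sense of \cite[Section 4.3]{ethier.kurtz.86}, so the latter is well-posed.

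With these ingredients van Casteren's theorem yields a closed extension $\mathcal C$ of $\mathcal A$ that generates a strongly continuous, positivity preserving contraction semigroup $(T_t)_{t\geq 0}$ on $\hat C(\mathbb R^d)$. By uniqueness in law this semigroup must coincide with the transition semigroup of the strong Markov family $(P_x)_{x\in\mathbb R^d}$ provided by Theorem \ref{Satz: Existenz unter Stetigkeit}(4), i.e.\ $T_tf(x)=E_x[f(X(t))]$, so any solution to the martingale problem is Markov with this Feller transition semigroup and is therefore a Feller process. The only real obstacle is administrative rather than analytic: matching the precise form of van Casteren's hypotheses (which variant of well-posedness is required, whether one feeds in $\mathcal A$, $\mathcal B$, or its closure, and whether strong Markov solutions are needed) to what Theorem \ref{Satz: Existenz unter Stetigkeit} delivers. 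All substantive work --- building resolvents via the positive maximum principle, applying Hille--Yosida, and upgrading pointwise continuity at $t=0$ to norm continuity --- is performed once and for all inside van Casteren's theorem.
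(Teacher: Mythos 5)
Your proposal is correct and follows essentially the same route as the paper: the paper's proof likewise observes that boundedness of $g$ forces $\A$ to map Schwartz functions into $\hat C(\mathbb R^d)$ and then invokes \cite[Theorem 3.1]{casteren.okitaloshima.96} (or \cite[Proposition 5.18]{hoh.98a}), with the remaining hypotheses (positive maximum principle, conservativity, dense domain, well-posedness) supplied by Theorem \ref{Satz: Existenz unter Stetigkeit} and the standing uniqueness assumption. You merely spell out the verification that the paper leaves implicit.
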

\begin{proof}
Boundedness of $g$ in Theorem  \ref{Satz: Existenz unter Stetigkeit}
implies that $\A$ maps Schwartz functions on a subset of $\hat C(\mathbb R^d)$.
In view of \cite[Theorem 3.1]{casteren.okitaloshima.96} or \cite[Proposition 5.18]{hoh.98a}, this yields the claim.
\end{proof}

\section{Counterexamples}\label{s:examples}
In this section we provide an example of a real-valued analytic symbol which fails to have the uniqueness property in the sense of Definition \ref{Definition: Symbol}. Moreover, we present a closely related example. Both correspond to polynomial processes in the sense of
\cite{cuchiero.11, cuchiero.al.12}, 
i.e.\ the extended operator $\A $ in the sense of \cite[Definition 2.3]{cuchiero.al.12} 
maps polynomials on polynomials of at most the same degree.
\begin{ex}\label{Beispiel: analytisches Symbol ohne Eindeutigkeit}
There is an analytic symbol, namely 

\begin{equation}\label{e:ex1}
q:\mathbb R\times\mathbb R\rightarrow\mathbb R,\quad (x,u)\mapsto
\begin{cases}
\frac{\cos(xu)-1}{x^2}&\text{for }x\neq0,\\
-\frac{u^2}{2}&\text{otherwise},
\end{cases}
\end{equation}
 satisfying the requirements of Theorem \ref{Satz: Existenz unter Stetigkeit} and having an entire extension to $\mathbb C\times\mathbb C$ where, however, uniqueness does not hold for the $q$-martingale problem. 

Moreover, there are solutions $X,Y$ to the $q$-martingale problem with $X(0)=0=Y(0)$ and
$P^{X(t)}\not=P^{Y(t)}$ for any $t>0$.
More generally, there are strong Markov processes $X$, $Y$ on $\mathbb R^d$ with the above symbol
which do not have the same law.
Moreover, $X$, $Y$ are {\em polynomial processes} in the sense of 
\cite[Definition 2.1]{cuchiero.al.12}. %\cite[Definition 4.1.1]{cuchiero.11} 
Starting in $X(0)=0=Y(0)$, their $n$-th moment at time $t$ is given by
$$ E_0(X^n(t)) = E_0(Y^n(t)) =
1_{\mathbb N}(n/2) \frac{t^{n/2}}{(n/2)!}\prod_{k=1}^{n/2}\left(2^{2k-1}-1\right),\quad t\geq 0,
\quad n\in\mathbb N. $$
\end{ex}
\begin{proof}
Suppose that the truncation function $\chi$ is continuous and anti-symmetric. Define $q$ as in (\ref{e:ex1}). 
The function $q$ has an obvious entire extension. Define
\begin{eqnarray}
 b(x)&=&0,\nonumber\\
 c(x)&=& 1_{\{x=0\}},\nonumber\\
 F(x,\cdot)&:=&1_{\{x\neq0\}}\frac{\delta_x+\delta_{-x}}{2x^2}.\label{e:a}
\end{eqnarray}
Then $(b,c,F)$ is the corresponding triplet in the sense of Remark \ref{Bemerkung: triplet}. For any $n\in\mathbb N,k\in\mathbb R_+$ we also define
\begin{equation}\label{e:b}
q_{k,n}(x,u):=\begin{cases} q(x,u) & \text{ if }\vert x\vert\geq k2^{-n}, \\ 4^n\frac{\cos(uk2^{-n})-1}{k^2} & \text{otherwise.} \end{cases}
\end{equation}
Then $q_{k,n}$ is a continuous symbol and the associated linear operator is given by
$$\A _{k,n}f(x) = \begin{cases} \frac{f(2x)-2f(x)+f(0)}{2x^2} & \text{ if }\vert x\vert\geq k2^{-n}, \\ 4^n\frac{f(x+k2^{-n})-2f(x)+f(x-k2^{-n})}{2k^2} & \text{otherwise.} \end{cases}$$
The symbol $q_{k,n}$ satisfies the requirements of Theorem \ref{Satz: Existenz unter Stetigkeit} whence
there is a solution $X_{k,n}$ to the martingale problem  $(\A _{k,n},\delta_0)$ in the sense of \cite[Section 4.3]{ethier.kurtz.86}. 
Since 
\begin{eqnarray*}
X_{k,n}(t)&=&
X_{k,n}(0)+\int_0^tb(X_{k,n}(s))ds+X_{k,n}^c(t)\\
&&{}+(x-\chi(x))*\mu^{X_{k,n}}(t)+\chi(x)*(\mu^{X_{k,n}}-\nu^{X_{k,n}})(t)\\
&=&x*\mu^{X_{k,n}}(t)
=\sum_{s\leq t}\Delta X_{k,n}(s)
\end{eqnarray*}
and since
$\Delta X_{k,n}(t)\in\{\pm X_{k,n}(t-),\pm k2^{-n}\}$
by (\ref{e:a}, \ref{e:b}),
we conclude  
that $X_{k,n}$ takes values only in the closed set $M_k:=k\{\pm2^{z}:z\in\mathbb Z\}\cup\{0\}$. Moreover, for any real-valued Schwartz function $f$ we have uniform convergence
$\A _{k,n}f\rightarrow \A  f$ for $n\to\infty$,
where $\A$ denotes the operator associated with $q$.
Using the proof of Lemma \ref{l:zweites Moment} and Chebyshev's inequality,
we have that the sequence $(X_n)$ satisfies inequality (5.2) in \cite[Remark 4.5.2]{ethier.kurtz.86}.
The set of Schwartz functions is an algebra which separates points.
By \cite[Lemma 4.5.1, Remark 4.5.2]{ethier.kurtz.86} there is a subsequence of $(X_{k,n})_{n\in\mathbb N}$ which converges weakly to a solution $X_k$ of the martingale problem $(\A,\delta_0)$. Note that  $X_k$ takes values only in $M_k$. 

Both $X_1,X_{\sqrt 2}$ are solutions to the martingale problem related to $\A $ and initial law $\delta_0$, where $X_1$ takes values only in $M_1$ and $X_{\sqrt{2}}$ only in $M_{\sqrt{2}}$. 
Their extended generator in the sense of \cite[Definition 2.3]{cuchiero.al.12} %\cite[Definition 1.4.1]{cuchiero.11} 
is defined for all polynomials. Polynomials are mapped to polynomials of at most the same degree, which means that $X_1,X_{\sqrt 2}$ are polynomial processes by \cite[Theorem 2.10]{cuchiero.al.12}. \cite[Theorem 2.7]{cuchiero.al.12} and its proof %\cite[Theorem 4.1.8]{cuchiero.11} 
yields the moments.
In particular, $E(X_1(t)^2)=E(X_{\sqrt{2}}(t)^2)>0$ for any  $t>0$, which means that 
$X_1(t)$, $X_{\sqrt{2}}(t)$ are not concentrated in zero.
Since $M_1\cap M_{\sqrt{2}}=\{0\}$, this implies that $X_1,X_{\sqrt{2}}$
cannot have the same law.
\end{proof}

\begin{figure}
\includegraphics[scale=0.8]{./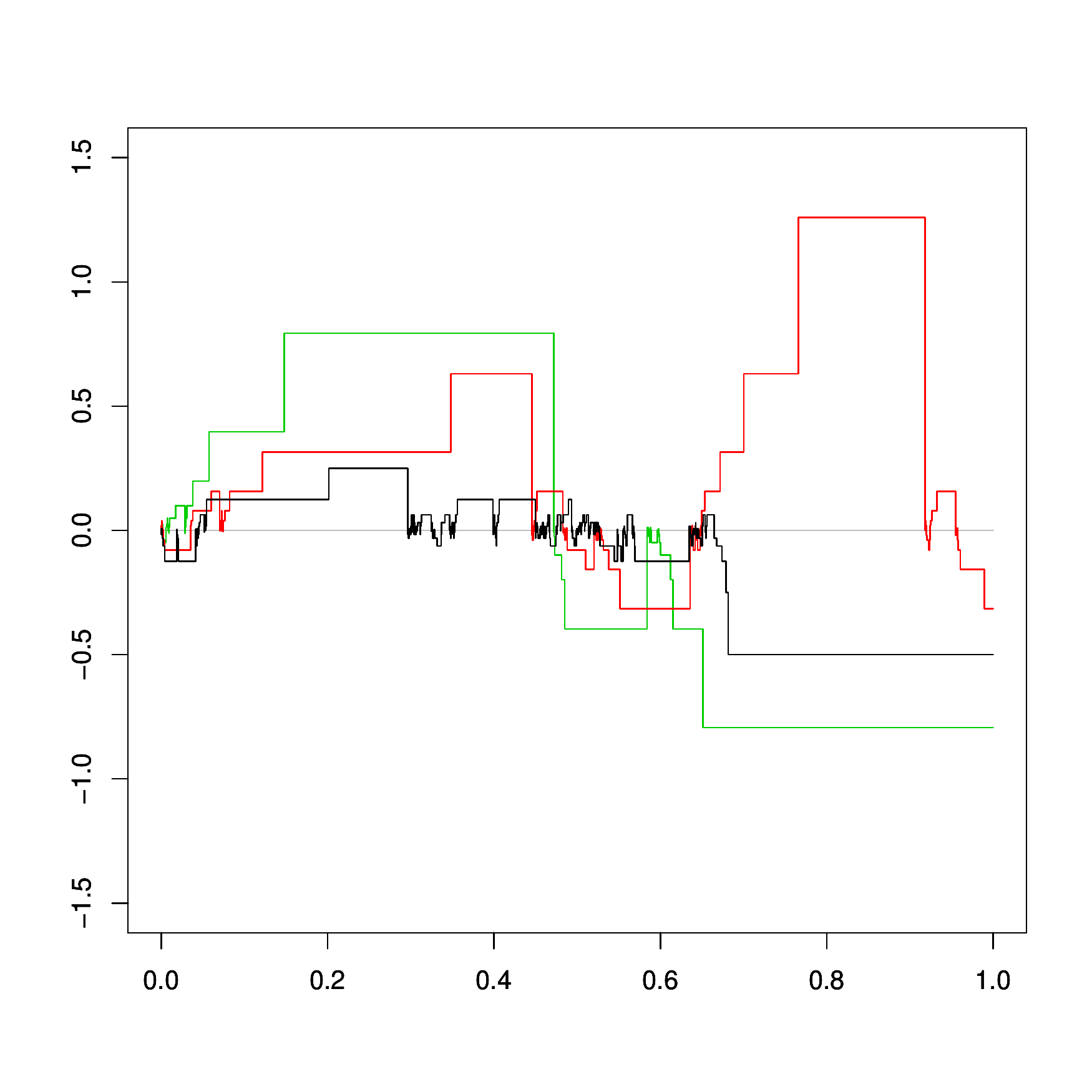}
\caption{The picture shows simulated paths from three different Markov processes with symbol 
(\ref{e:ex1}). All of them are based on the approximate generator $\A _{k,n}$ in the proof of Example \ref{Beispiel: analytisches Symbol ohne Eindeutigkeit} with $n=10$ and $k=1$ (black line), $k=\sqrt[3]{2}$ (red line) resp.\ $k=\sqrt[3]{4}$ (green line).}
\end{figure}

We now turn to a related example with analytic symbol where uniqueness fails.
It corresponds to an increasing process.
It is once more polynomial in the sense of 
\cite[Definition 2.3]{cuchiero.al.12}. %\cite[Definition 4.1.1]{cuchiero.11} 
However, since its state space equals $\mathbb R_+$, it is not perfectly in line with the setup of this paper.
\begin{ex}\label{Beispiel 2:analytisches symbol ohne Eindeutigkeit}
 Let 
 $$q:\mathbb R_+\times\mathbb R\rightarrow\mathbb C,\quad (x,u)\mapsto \begin{cases}\frac{e^{iux}-1}{x}&\text{for }x\neq0,\\iu&\text{otherwise.}\end{cases}$$ This function clearly has an entire extension to $\mathbb C\times\mathbb C$. 
 There are solutions $X,Y$ to the $q$-martingale problem which start in $0$ and are singular in the same sense as in the previous example.
 More generally, there are strong Markov processes $X$ and $Y$ with values in $\mathbb R_+$ and symbol $q$, which do not have the same law.
 Once more
$X$, $Y$ are polynomial processes. 
Starting in $0$, their $n$-th moment  at time $t$ is given by
 $$ E_0(X^n(t)) = E_0(Y^n(t)) =\frac{t^n}{n!}\prod_{k=1}^n\left(2^k-1\right),\quad t\geq0,n\in\mathbb N.$$
Exponent $q(x,\cdot)$ has L\'evy-Khintchine triplet $(\chi(x)/x,0,\delta_x/x)$ for $x>0$ and $(1,0,0)$ for $x=0$.
 Finally, observe that the continuous continuation given by
 $$ \tilde q(x,u) := 1_{\{x\geq 0\}}q(x,u)+1_{\{x<0\}}iu,\quad x,u\in\mathbb R$$
 on state space $\mathbb R$ yields the same process.
\end{ex}
\begin{figure}
\includegraphics[scale=0.8]{./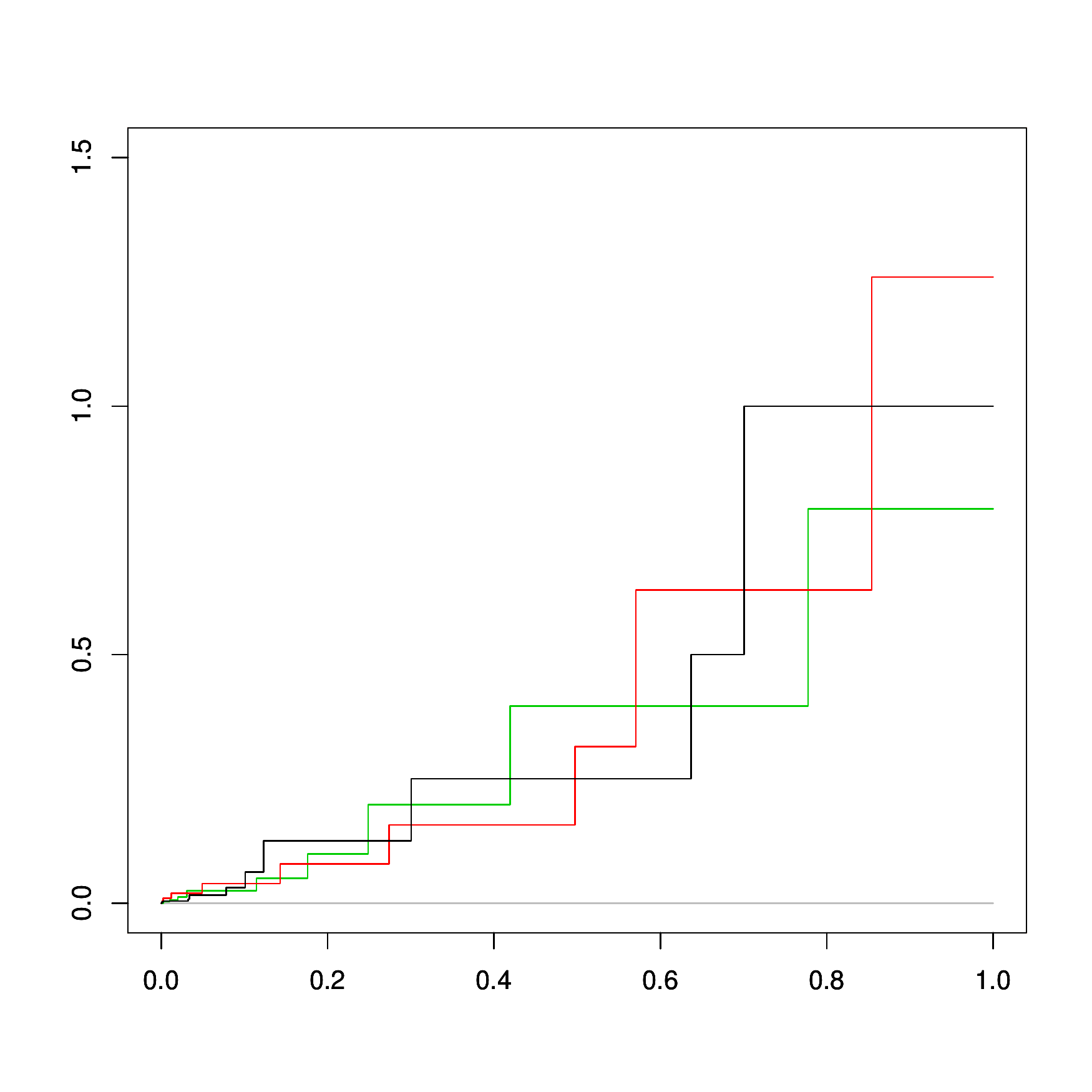}

The picture shows simulated paths from three different Markov processes. Each of them uses the approximate generator $\A _{k,n}$ appearing in the proof of Example \ref{Beispiel 2:analytisches symbol ohne Eindeutigkeit} with $k=1$ (black line), $k=\sqrt[3]{2}$ (red line) resp.\ $k=\sqrt[3]{4}$ (green line) and $n=10$ for all of them.
\end{figure}
\begin{proof}
 For any $n\in\mathbb N,k\in\mathbb R_+$ and $x,u\in\mathbb R$ we define
$$q_{k,n}(x,u):=\begin{cases} q(x,u) & \text{ for }x\in[ k2^{-n},k2^{n}], \\ 
q(k2^{-n},u) & \text{ for }x< k2^{-n}, \\
q(k2^{n},u) & \text{ for }x>k2^{n}. 
\end{cases}$$
Then $q_{k,n}$ is a continuous symbol with corresponding triplet
$(\chi(h(x))/h(x),0,\delta_{h(x)}/h(x))$
for $h(x):=(x\vee k2^{-n})\wedge k2^n$.
The associated linear operator is given by
$$\A _{k,n}f(x) = 
 \frac{f(x+h(x)) - f(x)}{h(x)} ,\quad x\in\mathbb R.$$
By Theorem \ref{Satz: Existenz unter Stetigkeit} there is a solution $X_{k,n}$ for the martingale problem related to $\A _{k,n}$ and initial law $\delta_0$. 
As in the previous example, we conclude that
$X_{k,n}(t)=\sum_{s\leq t}\Delta X_{k,n}(s)$ and that
$X_{k,n}$ takes values only in the closed set 
$M_k:=\{\ell k2^{n}:\ell\in\mathbb N\}\cup \{k2^{z}:z\in\mathbb Z\}\cup\{0\}$. 
Also as in Example \ref{Beispiel: analytisches Symbol ohne Eindeutigkeit},
for any real-valued Schwartz function $f$ on $\mathbb R$ we have uniform convergence
$\A _{k,n}f\rightarrow \A f$, where
$$\A(f):=
\begin{cases} \frac{f(2x) - f(x)}{x} & \text{ for }x>0, \\ 
f'(x) & \text{ for }x<0.
\end{cases}$$
Since
$$E\bigg(\sup_{t\in[0,T]}|X_{k,n}(t)|\bigg)
=E(X_{k,n}(T))\leq T,$$
Markov's inequality yields inequality (5.2) in \cite[Remark 4.5.2]{ethier.kurtz.86}.
Hence \cite[Lemma 4.5.1, Remark 4.5.2]{ethier.kurtz.86}
implies that there is a subsequence of $(X_{k,n})_{n\in\mathbb N}$ which converges weakly to a solution $X_k$ of the martingale problem $(\A,\delta_0)$. Obviously, $X_k$ takes values only in 
$M_k:=\{k2^{z}:z\in\mathbb Z\}\cup\{0\}$. 
Thus $X_1,X_{\sqrt 2}$ are solutions to the martingale problem related to $\A$ and initial law $\delta_0$, where $X_1$ takes values only in $M_1$ and $X_{\sqrt{2}}$ takes values only in $M_{\sqrt{2}}$. Since they are non-constant they cannot have the same distribution.

Observe that $\A $ satisfies the assumptions of \cite[Theorem 4.5.11(b)]{ethier.kurtz.86} because the real-valued Schwartz functions are an algebra that separates points and the second requirement is implied by \cite[Remark 4.5.12]{ethier.kurtz.86}. By \cite[Theorem 4.5.19]{ethier.kurtz.86}, $\A $ has different extensions $\A _1$, $\A _2$ which satisfy \cite[Theorem 4.5.19(a)]{ethier.kurtz.86} and correspond to generators of strong Markov processes by \cite[Theorem 4.5.19(d)]{ethier.kurtz.86}, cf.\ the proof of Theorem \ref{Satz: Existenz unter Stetigkeit} for detailed arguments. 

The statements on preservation of polynomials and the moments of $X$ follow as in the previous example.
\end{proof}

\section{The symbol and the uniqueness problem}\label{s:exuni}
The obvious question to ask is what conditions are needed to ensure uniqueness of the $q$-martingale problem for a given symbol $q$. For continuous processes the situation is well understood, The fact that SDE's have unique solutions under Lipschitz conditions directly yields uniqueness for $C^2$-symbols without jump part. 
\begin{prop}[no jumps]\label{Satz: SDE Bedingung}
 Let $q\in C^2(\mathbb R^d\times\mathbb R^d,\mathbb C)$ be a symbol which has the representation $q(x,u)=iub(x)-\frac{1}{2}uc(x)u$ for functions $b:\mathbb R^d\rightarrow\mathbb R^d,c:\mathbb R^d\rightarrow\mathbb R^{d\times d}$. Then uniqueness holds for the $q$-martingale problem.
\end{prop}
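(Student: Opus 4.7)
My plan is to reduce the $q$-martingale problem to an SDE driven by Brownian motion and apply the classical Yamada--Watanabe uniqueness theorem.

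First I would deduce $C^2$-regularity of the coefficients from the polynomial form of $q$ in $u$. Since $q(x,u) = iub(x) - \tfrac12 u c(x) u$, one can recover each component of $b$ and $c$ as an explicit finite linear combination of values $q(x, u_\ell)$ with fixed $u_\ell \in \mathbb R^d$; for instance $b_j(x) = \tfrac{1}{2i}(q(x, e_j) - q(x, -e_j))$ and $c_{jj}(x) = -q(x, e_j) - q(x, -e_j)$, with analogous formulae for the off-diagonal entries $c_{jk}$. The assumption $q \in C^2(\mathbb R^d \times \mathbb R^d, \mathbb C)$ therefore yields $b, c \in C^2$, and in particular both are locally Lipschitz. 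By Lemma \ref{l:Trippel Darstellung}, the associated operator is the purely differential second-order operator
$$\A f(x) = \nabla f(x)\, b(x) + \tfrac12 \mathrm{Tr}(Hf(x)\, c(x)),$$
since $F(x, \cdot) \equiv 0$ by construction.

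Second, by Theorem \ref{Satz: Semimartingaleigenschaft} any solution $X$ to the $q$-martingale problem is a semimartingale with local characteristics $(b(X_-), c(X_-), 0)$, so $X$ has continuous sample paths. A standard Brownian representation theorem for continuous semimartingales (cf.\ \cite{revuz.yor.99}) then yields, on a suitable enlargement of the probability space, a $d$-dimensional Brownian motion $W$ such that
$$X(t) = X(0) + \int_0^t b(X(s))\, ds + \int_0^t \sigma(X(s))\, dW(s),$$
where $\sigma := c^{1/2}$ denotes the nonnegative symmetric square root of $c$.

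Third, I would invoke the classical fact that the nonnegative symmetric square root of a $C^2$ positive-semidefinite matrix-valued function is locally Lipschitz. With $b$ locally Lipschitz and $\sigma$ locally Lipschitz, pathwise uniqueness holds for the SDE above on every interval $[0, \tau_n]$ with $\tau_n := \inf\{t \geq 0 : |X(t)| \geq n\}$, on which $b$ and $\sigma$ are globally bounded and Lipschitz. Yamada--Watanabe then gives uniqueness in law of the stopped process $X^{\tau_n}$. Since any solution is by definition $\mathbb R^d$-valued on $[0, \infty)$, we have $\tau_n \uparrow \infty$ almost surely, and uniqueness in law of $X$ follows by passing to the limit.

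I expect the main obstacle to be the possible degeneracy of $c$ at points where $c(x)$ loses rank: there, the Brownian motion $W$ cannot be extracted directly from $X$ but must be constructed via an independent auxiliary Brownian motion on an extension of the underlying probability space. Once the SDE representation is in place, the remaining argument is essentially routine.
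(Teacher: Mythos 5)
Your proposal is correct and follows essentially the same route as the paper, which likewise takes the nonnegative square root $\sigma=c^{1/2}$, invokes \cite[Theorem V.12.12]{rogers.williams.00} for its Lipschitz continuity, and then applies \cite[Theorem V.12.1 and Section V.22]{rogers.williams.00} for the SDE/martingale-problem correspondence and pathwise uniqueness. You merely spell out the steps the paper delegates to Rogers--Williams (recovering $b,c\in C^2$ from $q$, the Brownian representation on an extension, and the localisation/Yamada--Watanabe argument), all of which are sound.
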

\begin{proof}
Observe that $c$ takes actually values in the set of positive semidefinite matrices. Let $\sigma(x)$ be the positive square root of $c(x)$ for all $x\in\mathbb R^d$. Then \cite[Theorem V.12.12]{rogers.williams.00} implies that $\sigma$ is Lipschitz.  \cite[Theorem V.12.1 and Section V.22]{rogers.williams.00} applied to $(b,\sigma)$ yields the claim.
\end{proof}

As Example \ref{Beispiel: analytisches Symbol ohne Eindeutigkeit} indicates, the previous theorem does not hold for processes with jumps. One could express $q$-martingale problems for general symbols $q$ in terms of an SDE, but it is not clear what conditions on $q$ warrant Lipschitz continuity of the corresponding coefficients. Stroock-Varadhan type results, however, require the diffusion part of the symbol to be non-singular. A systematic study of existence and uniqueness property has been undertaken by Hoh in a number of papers, see e.g.\ \cite[Theorem 5.7]{hoh.98}, \cite[Theorem 1.1]{hoh.00} and the related article \cite[Theorem 9.4]{jacob.93}.

One of the main contributions of the present paper is the following uniqueness result.
\begin{thm}[Uniqueness]\label{Satz: regulaeres Symbol}
 Let $q\in C(\mathbb R^d\times\mathbb R^d,\mathbb C)$ be a Hölder-continuous symbol 
satisfying the requirements of Theorem \ref{Satz: Existenz unter Stetigkeit}. Moreover, suppose that
there are $K\in\mathbb R_+$ and complex measures $(P_{t,u})_{t\in[0,1],u\in\mathbb R^d}$ such that 
\begin{enumerate}
 \item $\widehat{P_{t,u}}(x):=\int_{}e^{ivx}P_{t,u}(dv)=e^{tq(x,u)}$ for all $x\in\mathbb R^d$,
  \item $\int_{}\frac{1+\vert u+v\vert^2}{1+\vert u\vert^2}\vert P_{t,u}\vert(dv)\leq 1+Kt$.
\end{enumerate} 
 Then existence and uniqueness holds for the $q$-martingale problem. In particular, there is a unique Feller process starting in any given distribution and having symbol $q$.
\end{thm}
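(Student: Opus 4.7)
The plan is to prove uniqueness via a stability estimate for the one-dimensional marginal characteristic functions, discretised in time by an Euler-type scheme built from the complex measures $P_{t,u}$. Existence is immediate from Theorem \ref{Satz: Existenz unter Stetigkeit}(1) since $q$ satisfies its hypotheses, and the Feller statement follows from Proposition \ref{p:feller}, so the entire effort is in uniqueness.

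The functional framework is the weighted sup-norm $\|\phi\|_*:=\sup_u|\phi(u)|/(1+|u|^2)$ on bounded measurable $\phi:\mathbb R^d\to\mathbb C$, together with the one-parameter family of linear operators
\[
(T_h\phi)(u):=\int_{\mathbb R^d}\phi(u+v)\,P_{h,u}(dv),\qquad h\in[0,1].
\]
Assumption~(2) gives $\|T_h\|_*\le 1+Kh$, hence $\|T_{t/n}^n\|_*\le e^{Kt}$. Assumption~(1) produces the identity $T_h\hat\mu(u)=\int e^{iux}e^{hq(x,u)}\mu(dx)$: applied to the characteristic function of a probability measure $\mu$, the operator $T_h$ returns the characteristic function of $\mu$ propagated for time $h$ by the L\'evy dynamics with the symbol frozen at each $x$.

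By Theorem \ref{Satz: Existenz unter Stetigkeit}(5), uniqueness of the $q$-martingale problem for all initial laws reduces to uniqueness of the Markov family $(P_x)$ from part~(4), which in turn reduces to uniqueness of the one-dimensional marginals $\mu_t^{x_0}:=P_{x_0}^{X(t)}$ for each $x_0$. Writing $\phi_t:=\widehat{\mu_t^{x_0}}$, the heart of the argument is the stability bound $\|\phi_{t+h}-T_h\phi_t\|_*\le \varepsilon(h)$ with $\varepsilon(h)/h\to 0$ uniformly in $t$ on compacts. Conditioning on $\mathcal F_t$ by the strong Markov property gives
\[
\phi_{t+h}(u)-T_h\phi_t(u)=\int\bigl[\psi_h(u,x)-e^{iux}e^{hq(x,u)}\bigr]\mu_t^{x_0}(dx),
\]
where $\psi_h(u,x):=E_x[e^{iuX(h)}]$. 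I would then apply It\^o's formula to the bounded semimartingale $M(s)=e^{iuX(s)}e^{(h-s)q(x,u)}$ on $[0,h]$, using the local-martingale property built into Definition \ref{Definition: Symbol}(2), to obtain
\[
\psi_h(u,x)-e^{iux}e^{hq(x,u)}=E_x\!\left[\int_0^h\bigl(q(X(s),u)-q(x,u)\bigr)e^{iuX(s)}e^{(h-s)q(x,u)}\,ds\right].
\]
Since $\Re q(x,u)\le 0$, both exponential factors have modulus at most one, so H\"older continuity controls the integrand by $f(X(s)-x)(1+|u|^2)$; dividing by $1+|u|^2$ yields a bound independent of $u$, namely $\int_0^hE_x[f(X(s)-x)]\,ds$. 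The uniform second-moment bound $E_x[|X(s)-x|^2]\le Cs$ (inherited from the boundedness of $|b|+\Tr(c)+\int|y|^2F(\cdot,dy)$), continuity of $f$ at $0$, and Chebyshev's inequality then give $\sup_xE_x[f(X(s)-x)]\to 0$ as $s\to 0$, securing the stability bound uniformly in $t$.

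Finally I would telescope: with $h=t/n$,
\[
\phi_t-T_h^n\phi_0=\sum_{k=1}^n T_h^{n-k}\bigl(\phi_{kh}-T_h\phi_{(k-1)h}\bigr),
\]
so $\|\phi_t-T_{t/n}^n\phi_0\|_*\le e^{Kt}\,n\,\varepsilon(t/n)\to 0$. Since $T_h$ depends only on $q$ and $\phi_0(u)=e^{iux_0}$ only on the starting point, the limit is the same for every Markov family of solutions, forcing $\phi_t$, and hence $\mu_t^{x_0}$, to be unique. The main technical obstacle I anticipate is the stability estimate itself: the H\"older bound on $q$ carries an unavoidable factor $1+|u|^2$, and only the combination of the weighted norm, the exponential damping $|e^{rq(x,u)}|\le 1$, and the It\^o representation above makes this factor cancel exactly, leaving a bound of the correct order in $h$. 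This coupling is presumably why the theorem pairs H\"older continuity with the $(1+|u+v|^2)/(1+|u|^2)$-bound on $|P_{t,u}|$.
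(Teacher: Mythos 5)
Your proposal is correct and follows essentially the same route as the paper: the weighted norm $\sup_u|\phi(u)|/(1+|u|^2)$, the use of $P_{h,u}$ as a one-step propagator whose norm is controlled by condition (2), the comparison of each solution to a conditional L\'evy process with frozen symbol (your It\^o/product-rule identity is exactly the variation-of-constants formula of Lemma \ref{Lemma: Evolution der charakteristischen Funktion}), the H\"older bound to make the error $o(h)$ (Lemma \ref{Lemma: Dynamik verschwindet}), and a telescoping/Gr\"onwall step (Lemma \ref{Proposition: Groenwall} is itself proved by your discretization). The only genuine variations are cosmetic: you reduce to the Markov families of Theorem \ref{Satz: Existenz unter Stetigkeit}(4)--(5) rather than comparing two arbitrary solutions via regular conditional laws and then invoking Ethier--Kurtz's marginal-to-law theorem, which is what the paper does in Lemma \ref{Proposition: 1d Eindeutigkeitssatz} and Corollary \ref{Satz: Eindeutigkeitssatz}.
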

\begin{proof}
The proof is to be found in Section \ref{proofs2}.
\end{proof}

\begin{ex}
 Let $q(x,u) := (1-\cos(x))\psi(u)$, $x,u\in\mathbb R$ where $\psi$ is a real-valued L\'evy exponent on $\mathbb R$ which satisfies the requirements of Theorem \ref{Satz: Existenz unter Stetigkeit}, e.g.\ $\psi(u) = \frac{-u^2}{2}$. In particular, $q(0,u) = 0$ for any $u\in\mathbb R$. Then existence and uniqueness for the $q$-martingale problem hold. 
\end{ex}
\begin{proof}
Instead of verifying the conditions in the previous theorem directly, 
we refer to Lemma \ref{Satz: Eindeutigkeit unter Fourier Bedingung} below, which follows from Theorem \ref{Satz: regulaeres Symbol}.
\end{proof}

If the measure $P_{t,u}$ in Theorem \ref{Satz: regulaeres Symbol} happens to be nonnegative as in Example \ref{Beispiel: analytisches Symbol ohne Eindeutigkeit},  
we have 
$$\int\frac{1+\vert u+v\vert^2}{1+\vert u\vert^2} P_{t,u}(dv)\leq
1+{2|u|\over 1+|u|^2}\left|\int vP_{t,u}(dv)\right|
+{1\over 1+|u|^2}\int|v|^2 P_{t,u}(dv).$$
In this case condition (2) in Theorem \ref{Satz: regulaeres Symbol} 
can be interpreted as first and second moment condition on $P_{t,u}$,
which
can be vaguely viewed as a
``smoothness'' condition on $q$. However, in particular for complex $P_{t,u}$ it is less obvious how restrictive
condition (2) is and how one can verify it. We therefore provide a second uniqueness result which follows from
Theorem \ref{Satz: regulaeres Symbol}, but which is stated directly in terms of $q$.

\begin{thm}\label{Satz: Eindeutigkeit unter Elliptizitaet neu}
Let $q$ be a continuous symbol with $q(\cdot,u)\in C^{[d/2]+3}(\mathbb R^d,\mathbb C)$ for all $u\in\mathbb R^d$ and such that
$q$ satisfies the requirements of Theorem \ref{Satz: Existenz unter Stetigkeit},
 cf.\ also Corollary \ref{co:exuntermomenten}.
 Let  $\phi:\mathbb R^d\rightarrow\mathbb C$ be
a characteristic exponent satisfying the following conditions
\begin{align}
 \label{e:elliptizitaet} \vert \Re(q(x,u)) \vert \geq&\ g_1(x)\vert\phi(u)\vert, \\
 \label{e:wachstum} \vert \partial_x^\alpha q(x,u)\vert \leq&\ g_2(x)\vert\phi(u)\vert
\end{align}
for some bounded functions $g_1,g_2:\mathbb R^d\rightarrow(0,\infty)$ and any $\alpha\in\mathbb N^d$ with $\vert\alpha\vert\leq [d/2]+3$. Then existence and uniqueness holds for the $q$-martingale problem.
\end{thm}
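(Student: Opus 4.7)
The plan is to derive Theorem~\ref{Satz: Eindeutigkeit unter Elliptizitaet neu} from Theorem~\ref{Satz: regulaeres Symbol} by constructing the family of complex measures $(P_{t,u})_{t\in[0,1],u\in\mathbb R^d}$. Existence of a solution is already guaranteed by the hypothesis that $q$ satisfies the requirements of Theorem~\ref{Satz: Existenz unter Stetigkeit}. H\"older continuity of $q$ follows from $|\nabla_1 q(x,u)|\le\|g_2\|_\infty|\phi(u)|$ combined with the fact that $|\phi(u)|\le C(1+|u|^2)$ for every L\'evy exponent $\phi$, together with the uniform pointwise bound $|q(x,u)-q(y,u)|\le 2\|g_2\|_\infty|\phi(u)|$; this gives an appropriate bounded dominating function $f$ in the sense of Definition~\ref{Definition: Symbol}. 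The central task is to build the measures $P_{t,u}$ and verify condition~(2) of Theorem~\ref{Satz: regulaeres Symbol}.

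For fixed $u\in\mathbb R^d$, I would define $P_{t,u}$ as the inverse Fourier transform in $v$ of the smooth bounded function $x\mapsto e^{tq(x,u)}$. Applying Fa\`a~di~Bruno's formula together with (\ref{e:wachstum}) and the uniform bound $|e^{tq(x,u)}|\le 1$ (which holds because $\mathrm{Re}\,q\le 0$), one obtains for every multi-index $|\alpha|\le[d/2]+3$ the pointwise estimate
$$ \bigl|\partial_x^\alpha e^{tq(x,u)}\bigr|\le C_\alpha\bigl(1+t|\phi(u)|\bigr)^{|\alpha|} $$
uniformly in $x\in\mathbb R^d$. The smoothness threshold $[d/2]+3$ is exactly what makes an integration-by-parts/Sobolev argument on the formal inversion integral $p_{t,u}(v)=(2\pi)^{-d}\int e^{-ivx}e^{tq(x,u)}\,dx$ produce sufficient decay of $p_{t,u}$ in $v$ (namely $|v|^{[d/2]+3}p_{t,u}(v)$ bounded), so that $p_{t,u}\in L^1(\mathbb R^d)$ and $P_{t,u}$ is a finite complex measure with density $p_{t,u}$.

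Fourier duality now converts multiplication by $(u+v)$ on the $v$-side into the operator $-i\partial_x$ applied to $e^{iux}e^{tq(x,u)}$; consequently $\int\frac{1+|u+v|^2}{1+|u|^2}|P_{t,u}|(dv)$ is controlled by uniform norms of $e^{tq(x,u)}$ and of its $x$-derivatives of order up to two, divided by $1+|u|^2$. The linear-in-$t$ structure required by the bound $1+Kt$ is extracted by writing
$$ e^{tq(x,u)}-1=\int_0^t q(x,u)e^{sq(x,u)}\,ds, $$
and similarly for its derivatives, and then using $|q(x,u)|\le\|g_2\|_\infty|\phi(u)|\le C(1+|u|^2)$ to absorb the factors of $|\phi(u)|$ into the denominator $1+|u|^2$. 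The ellipticity hypothesis (\ref{e:elliptizitaet}) is what ensures that the overall size of $P_{t,u}$ is genuinely governed by $|\phi(u)|$ rather than by anything larger that could spoil the absorption.

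The main obstacle is the rigorous passage from pointwise derivative estimates of $x\mapsto e^{tq(x,u)}$ to total-variation control of the complex measure $P_{t,u}$. Because $e^{tq(\cdot,u)}$ is typically not in $L^1(\mathbb R^d_x)$---indeed for small $|\phi(u)|$ it stays close to $1$ uniformly in $x$---the formal inversion integral does not converge absolutely as written, so one cannot just cite Fourier inversion. This will have to be handled either by a mollification (multiplying the integrand by $e^{-\varepsilon|x|^2}$ and passing to the limit $\varepsilon\downarrow 0$) or by a subtraction trick (splitting $q(x,u)$ into a constant-in-$x$ piece, which produces a weighted Dirac mass at the origin, plus a decaying remainder on which Fourier inversion is unproblematic). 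This is the step at which the complex-measure technicalities collected in the appendix are invoked, and where the smoothness order $[d/2]+3$ and the ellipticity (\ref{e:elliptizitaet}) are forced to cooperate in order to close the estimate.
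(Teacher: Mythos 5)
Your reduction to Theorem \ref{Satz: regulaeres Symbol} is the right target, and your treatment of existence and of H\"older continuity matches the paper (Lemma \ref{Lemma: fHoelder stetig}). But the construction of $P_{t,u}$ by continuous Fourier inversion of $x\mapsto e^{tq(x,u)}$ on $\mathbb R^d$ does not work, and the difficulty you flag at the end is not a technicality that mollification or a subtraction trick can remove --- it is the reason the whole approach fails. After subtracting the constant-in-$x$ piece $e^{tq(x_0,u)}$, the remainder is still merely bounded with bounded derivatives; nothing in \eqref{e:elliptizitaet}--\eqref{e:wachstum} gives any decay in $x$, so the remainder is not in $L^1(\mathbb R^d_x)$ and its inverse Fourier transform is only a tempered distribution. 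Crucially, boundedness of finitely many (or even all) $x$-derivatives of a function on $\mathbb R^d$ does \emph{not} imply that its Fourier transform is a finite complex measure (think of almost periodic functions $\sum_n a_n e^{i\lambda_n x}$ with $\sum_n|a_n|=\infty$), so the claimed decay $|v|^{[d/2]+3}p_{t,u}(v)$ bounded cannot be extracted from your pointwise Fa\`a di Bruno estimates. Moreover, even granting a finite measure, you give no mechanism by which its total variation is $\le 1+Kt$: a global bound $|\partial_x^\alpha q|\le g_2|\phi|$ with $g_2/g_1$ possibly large in no way forces the oscillatory part of $e^{tq(\cdot,u)}$ to be dominated by the damping $e^{t\Re q(x_0,u)}$, and without such domination the total variation can grow exponentially in $t|\phi(u)|$ rather than linearly.

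The paper's proof supplies exactly the ingredients that are missing here. First it localises (Proposition \ref{S:Lokalisation}): near each $x_0$ it replaces $q$ by $q_\ell(y,u)=\phi(y)(q(\gamma_\ell(y),u)-\psi(u))+\psi(u)$ with $\psi=q(x_0,\cdot)$, a compactly supported perturbation of a fixed L\'evy exponent on a cube of side $1/\ell$. It then periodises and expands in a Fourier \emph{series}; on the torus, control of $[d/2]+1$ derivatives does give absolute summability of the Fourier coefficients (\cite[Theorem 3.2.16]{grafakos.08}), which is the correct substitute for your unavailable $L^1$ inversion, and two further derivatives control $\sum|n|^2(|a_n|+|b_n|)$, explaining the threshold $[d/2]+3$. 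The smallness of the cube produces the factor $1/\ell$ that, combined with ellipticity, yields the domination $-\Re(a_0(u))\ge\sum_{n\ne 0}(|a_n(u)|+|b_n(u)|)$ (condition \eqref{e:fc1}); this is where \eqref{e:elliptizitaet} actually enters. Finally, each Fourier mode is handled by an explicit lattice-supported complex measure $\exp(t(b\cos(kn\cdot)-a))^{\vee}$ of total variation $\le 1$ (Lemma \ref{Lemma: Existenz eines Masses}, via the analytic functional calculus of the appendix), and these are assembled into $P_{t,u}$ by infinite convolution (Lemma \ref{Proposition: Abgeschlossenheit}), giving the bound $1+Kt$ factor by factor. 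Without localisation, periodisation, and the per-mode domination, condition (2) of Theorem \ref{Satz: regulaeres Symbol} cannot be verified, so your argument as it stands has a genuine gap at its central step.
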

\begin{proof}
The proof is to be found in Section \ref{proofs2}.
\end{proof}

Condition \eqref{e:wachstum} is a uniform smoothness requirement.
Condition \eqref{e:elliptizitaet}, however, means that the symbol is bounded from below in an appropriate sense. 
Such an ellipticity condition occurs in the Stroock-Varadhan existence and uniqueness result, cf.\ \cite[Theorem 4.3]{stroock.75}. 
The advantage of the result in \cite{stroock.75} is that continuity suffices and no extra smoothness is needed. Moreover, the drift only needs to be measurable. However, Stroock requires an ellipticity condition with respect to the explicit symbol $\phi(u)=-\frac{1}{2}u^2$, which means that a continuous diffusion part is present everywhere. His proof also uses some extra regularity for the jump measure which, however, could be relaxed.

Since Stroock and Varadhan have published their result, some variants of the Stroock-Varadhan theorem with a more general ellipticity condition have been established by several authors, i.e.\ with a more general function $\phi$ than for the original result, cf.\ \cite{jacob.schilling.01} for an overview. A recent result is due to B\"ottcher, cf.\ \cite[Theorem 2.8]{boettcher.05}, who requires equations (\ref{e:elliptizitaet},\ref{e:wachstum}) for arbitrary $\alpha$ and, moreover, a certain boundedness for the derivatives with respect to $u$. Theorem \ref{Satz: Eindeutigkeit unter Elliptizitaet neu} may be easier to apply in practice because it involves only finitely many derivatives and no smoothness in $u$.

\section{Proofs}\label{proofs}
\subsection{Proof of the existence theorem}\label{Abschnitt: Bew. Existenzsatz}
We start with the semimartingale property of solutions, cf.\ \cite[Corollary 2.6]{veerman.11} for a more general setup.
\begin{prop}[Semimartingale characteristics]\label{Satz: Semimartingaleigenschaft}
Let $q$ be a symbol and $X$  a solution to the $q$-martingale problem. Then $X$ is a semimartingale which allows for local or differential characteristics in the sense of \cite[Definition 2.2]{kallsen.04}. 
Moreover, $X$ is quasi-left continuous, i.e.\ $\Delta X_T=0$ almost surely for any finite predictable stopping time $T$.
If $q$ is represented by triplet  $(b,c,F)$ as in Definition \ref{Definition: Symbol},
\begin{equation}\label{e:tripel}
 (\omega,t)\mapsto\left(b(X(t-)),c(X(t-)),F(X(t-),\cdot)\right)(\omega)
\end{equation}
is a version of the local characteristics of $X$ relative to truncation function $\chi$.
\end{prop}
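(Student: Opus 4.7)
The plan is to first promote the local-martingale property from the complex exponentials $e^{iu\cdot}$, $u\in \mathbb R^d$, to all real-valued Schwartz test functions via Fourier inversion, and then to read off both the semimartingale property and the differential characteristics from this enlarged family. I would proceed in three steps.

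\textbf{Step 1 (Fourier extension).} Introduce the localizing stopping times $\tau_R := \inf\{t \geq 0 : |X(t)| \geq R \text{ or } |X(t-)| \geq R\}$, which increase to $\infty$ by c\`adl\`ag-ness of $X$. The estimate $|q(x,u)| \leq C_x(1+|u|^2)$ from the remark in Section \ref{s:symbol}, with $x \mapsto C_x$ measurable and locally bounded (inspection of the triplet construction in \cite[Lemma II.2.44]{js.87}), gives $|q(X(s-), u)| \leq C_R(1+|u|^2)$ for $s \leq \tau_R$. Hence each stopped process $M_u^{\tau_R}$ is a genuine martingale bounded by $1 + C_R(1+|u|^2)t$. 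For a Schwartz function $f$, Definition \ref{Definition: Generator} reads $\A f(x) = \int q(x,u)\check f(u)e^{iux} du$, and two Fubini interchanges (one for Lebesgue integration, one for conditional expectation, both legitimised by the rapid decay of $\check f$ against the polynomial growth $1+|u|^2$) produce
$$M_f(t\wedge \tau_R) \;=\; \int \check f(u)\,M_u(t\wedge \tau_R)\,du.$$
The right-hand side is a martingale, so $M_f(t):=f(X(t))-\int_0^t \A f(X(s))\,ds$ is a local martingale for every Schwartz $f$ (and, by approximation, for every $f \in C_c^\infty(\mathbb R^d,\mathbb R)$).

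\textbf{Step 2 (Semimartingale property and characteristics).} Choose Schwartz functions $f_j^R$ that agree with the coordinate $x_j$ on $B(0,R)$. Since $f_j^R(X(t))=X_j(t)$ on the event $\{t \leq \tau_R\}$, Step 1 makes each coordinate $X_j$ a special semimartingale up to $\tau_R$; letting $R \to \infty$ yields the semimartingale property of $X$. To identify the differential characteristics relative to $\chi$, I would rewrite $\A f$ via Lemma \ref{l:Trippel Darstellung} as
$$\A f(x) \;=\; \nabla f(x)b(x) + \tfrac12 \Tr(Hf(x)c(x)) + \int\bigl(f(x+y)-f(x)-\nabla f(x)\chi(y)\bigr)F(x,dy),$$
and compare this with the It\^o-formula decomposition of $f(X)$ for a semimartingale with differential characteristics $(\beta,\gamma,\kappa)$ in the sense of \cite[Definition 2.2]{kallsen.04}. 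Requiring the local-martingale property of $M_f$ on a separating class $f \in C_c^\infty$ (cut-offs of monomials of degree at most $2$, which isolate drift, diffusion and jump contributions respectively) forces by a standard uniqueness argument $(\beta,\gamma,\kappa) = (b(X_-),c(X_-),F(X_-,\cdot))$.

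\textbf{Step 3 (Quasi-left continuity).} All three components of the characteristics are absolutely continuous in $t$, so the compensator $\nu^X$ of the jump measure $\mu^X$ does not charge any predictable graph. By standard random-measure theory this forces $\Delta X_T = 0$ a.s.\ at every finite predictable stopping time $T$.

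The step I expect to be the main obstacle is Step 1: the uniform bound on $M_u^{\tau_R}$ grows like $1+|u|^2$, so integrability against $\check f(u)$ and the conditional-expectation Fubini must both be carried by the Schwartz decay of $\check f$, and one has to be attentive that the Lebesgue integral in $u$ genuinely commutes with the conditional expectation on a suitable dominating set. Once the enlarged class of test functions has been secured, Steps 2 and 3 are routine consequences of the well-known correspondence between the martingale-problem formulation and the Jacod–Shiryaev characteristics.
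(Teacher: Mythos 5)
The paper disposes of everything except quasi-left continuity in one line: Definition \ref{Definition: Symbol}(2) states verbatim that the processes $e^{iuX}-\int_0^\cdot q(X(s),u)e^{iuX(s)}\,ds$ are local martingales, and this is precisely the exponential condition of \cite[Theorem II.2.42]{js.87} with $A(u)_t=\int_0^t q(X(s-),u)\,ds$; that theorem then delivers the semimartingale property and the identification of the characteristics as $(b(X_-),c(X_-),F(X_-,\cdot))$ in one stroke. Your Steps 1 and 2 essentially re-derive the relevant implication of that theorem by hand (Fourier extension to Schwartz test functions, then identification of the characteristics via cut-off monomials). That is a legitimate route in spirit, but it reproduces exactly the content of the cited result while exposing you to a technical point that the citation absorbs.

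That technical point is a genuine gap in your Step 1: you assert that $x\mapsto C_x$ is locally bounded ``by inspection of the triplet construction''. The proposition assumes only that $q$ is a symbol, i.e.\ measurable in $x$; no continuity or local boundedness of the triplet is imposed (contrast with Theorem \ref{Satz: Existenz unter Stetigkeit}, where such boundedness is an explicit hypothesis). The optimal constant $C_x$ is comparable to $|b(x)|+\Tr(c(x))+\int(1\wedge|y|^2)F(x,dy)$, which is merely measurable and can be unbounded on every ball. Consequently $\sup_{|x|\le R}C_x$ may be infinite, the stopped processes $M_u^{\tau_R}$ need not be bounded martingales, and the dominating function needed for your two Fubini interchanges is not supplied by the spatial localization $\tau_R$ alone. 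A repair via $\sigma_K:=\inf\{t:\int_0^t C_{X(s)}\,ds\ge K\}$ requires first showing that $t\mapsto C_{X(t)}$ is locally integrable, which again does not follow from measurability alone. The clean way out is the paper's: invoke \cite[Theorem II.2.42]{js.87} directly on the exponential family. Your Step 3 is correct and coincides with the paper's appeal to \cite[Proposition II.2.9(i)]{js.87}: the compensator $F(X(t-),dx)\,dt$ charges no predictable graph, hence $\Delta X_T=0$ a.s.
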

\begin{proof}
 Except for quasi-left continuity this follows from the definition and \cite[Theorem II.2.42]{js.87}. Quasi-left continuity is obtained from \cite[Proposition II.2.9(i)]{js.87}.
\end{proof}

We continue with a  
lemma  which yields a sufficient condition for the existence of second moments.
\begin{lem}\label{l:zweites Moment}
 Let $q$ be a symbol which satisfies the requirements of Theorem \ref{Satz: Existenz unter Stetigkeit}. Moreover, let $X$ be a solution to the $q$-martingale problem. Then 
 $E(\sup_{s\in[0,t]}|X(s)-X(0)|^2)<\infty$ for any $t\in\mathbb R_+$.
\end{lem}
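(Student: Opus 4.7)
The plan is to exploit the canonical semimartingale decomposition of $X$ and control each of its four components separately in the $L^2$-norm of the running supremum, using the uniform bound on $g$.

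First, Proposition \ref{Satz: Semimartingaleigenschaft} applies directly to show that $X$ is a semimartingale whose differential characteristics relative to $\chi$ are $(b(X_-),c(X_-),F(X_-,\cdot))$. This yields the canonical decomposition
$$X(t)-X(0)=\int_0^t b(X(s-))ds+X^c(t)+\chi(y)*(\mu^X-\nu^X)(t)+(y-\chi(y))*\mu^X(t),$$
with $[X^{c,i},X^{c,j}]_t=\int_0^t c_{ij}(X(s-))ds$ and $\nu^X(ds,dy)=F(X(s-),dy)ds$. The hypothesis of Theorem \ref{Satz: Existenz unter Stetigkeit} guarantees that $\vert b\vert$, $\Tr(c)$ and $\int\vert y\vert^2 F(\cdot,dy)$ are each bounded by some finite $M$, which will be the sole quantitative input.

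Second, I would dispatch the three easier pieces. The drift part has total variation at most $Mt$ pathwise. For the continuous martingale part, Doob's $L^2$-inequality gives $E(\sup_{s\leq t}\vert X^c(s)\vert^2)\leq 4E(\sum_i[X^{c,i}]_t)\leq 4dMt$. For the small-jump martingale $\chi*(\mu^X-\nu^X)$, the Burkholder--Davis--Gundy inequality together with $\vert\chi(y)\vert^2\leq (1+\Vert\chi\Vert_\infty^2)\vert y\vert^2$ (valid because $\chi(y)=y$ for $\vert y\vert\leq 1$ and $\vert\chi\vert\leq\Vert\chi\Vert_\infty$ elsewhere) yields an $L^2$-supremum bound of the form $C_1 Mt$.

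Third comes the one genuinely delicate piece, the finite-variation large-jump term $J:=(y-\chi(y))*\mu^X$, which is the main obstacle since it is neither pathwise bounded nor a priori a martingale. I would split $J=(y-\chi(y))*(\mu^X-\nu^X)+(y-\chi(y))*\nu^X$. The integrand vanishes on $\{\vert y\vert\leq 1\}$, so on its support one has $\vert y-\chi(y)\vert\leq(1+\Vert\chi\Vert_\infty)\vert y\vert\leq(1+\Vert\chi\Vert_\infty)\vert y\vert^2$ and $(y-\chi(y))^2\leq 2(1+\Vert\chi\Vert_\infty^2)\vert y\vert^2$. The second-moment control $\int\vert y\vert^2 F(\cdot,dy)\leq M$ therefore bounds the compensator pathwise by a constant times $t$ and, via BDG, bounds the $L^2$-supremum of the compensated martingale by a constant times $t$ as well. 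Assembling the four estimates and applying the $L^2$ triangle inequality gives $\Vert\sup_{s\leq t}\vert X(s)-X(0)\vert\Vert_{L^2}<\infty$, which is the assertion.
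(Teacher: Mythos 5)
Your proof is correct and follows essentially the same route as the paper: decompose $X-X(0)$ into a finite-variation part, bounded pathwise via the uniform bound on $g$, and martingale parts controlled in $L^2$ by Doob/BDG through the (predictable) quadratic variation $\int_0^t\bigl(c_{ii}(X(s-))+\int y_i^2F(X(s-),dy)\bigr)ds$. The only cosmetic difference is that the paper invokes the special semimartingale canonical decomposition $X=X(0)+M+A$ once (so the compensated large jumps are already folded into $M$ and their compensator into $A$), whereas you keep the four Lévy--Khintchine pieces separate and compensate the large-jump term by hand; the quantitative input and the key inequality are identical.
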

\begin{proof}
Proposition \ref{Satz: Semimartingaleigenschaft} implies that $X$ is a semimartingale with local characteristics 
of the form (\ref{e:tripel}).
Boundedness of $g$ implies that it is a special semimartingale, cf.\ e.g.\ \cite[Proposition II.2.29(a)]{js.87}.
The finite variation part $A$ in its canonical decomposition $X=X(0)+M+A$
is of the form 
$$A(t)=\int_0^t\bigg( b(X(s-))+\int (x-\chi(x))F(X(s-),dx)\bigg)ds$$ 
and hence bounded on any compact interval.
For the local martingale part $M$ we have
$$\langle M_i,M_i\rangle(t)=\int_0^t \bigg(c_{ii}(X(s-))+\int x_i^2F(X(s-),dx)\bigg)ds,\quad i=1,\dots,d,$$
which is bounded on any compact interval as well.
Doob's inequality yields that
$$E\bigg(\sup_{s\in[0,t]}|M_i(s)|^2\bigg)\leq 4 E([M_i,M_i](t))= 4E( \langle M_i,M_i\rangle(t))<\infty,\quad i=1,\dots,d$$
for any $t\in\mathbb R_+$, which yields the claim.
\end{proof}

\begin{lem}\label{l:paulneu}
  Assume that the requirements of \cite[Theorem 4.5.19]{ethier.kurtz.86} are met where $\mathcal A_0$ denotes the corresponding operator on the state space $\mathbb R^d$ and $\mathcal T$ is the set of all bounded stopping times relative to the natural filtration on the Skorokhod space. 
 Then it is possible to choose measures $(P_x)_{x\in\mathbb R^d}$ on the Skorokhod space such that the canonical process $X$ is a solution to the $\mathcal A_0$-martingale problem with $X(0) = x$ a.s.\ under $P_x$, $x\in\mathbb R^d$ and such that $x\mapsto P_x(X(t)\in A)$ is measurable for any $t\geq 0$ and any Borel set $A\subset\mathbb R^d$. Moreover, $(X,(P_x)_{x\in\mathbb R^d})$ is strong Markov. 
Finally, if the $q$-martingale problem has several solutions for some initial law $P^{X(0)}=\nu$, then there are several such families of measures $(P_x)_{x\in\mathbb R^d}$.
\end{lem}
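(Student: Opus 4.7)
The plan is to obtain the first three assertions as a direct consequence of \cite[Theorem 4.5.19]{ethier.kurtz.86} and then derive the last assertion by combining a regular disintegration argument with a second invocation of the same theorem.

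First I would apply \cite[Theorem 4.5.19]{ethier.kurtz.86}, whose hypotheses are assumed to hold. Its Markov selection procedure furnishes a closed extension $\mathcal{A}_1$ of $\mathcal{A}_0$ together with a family of probability measures $(P_x)_{x\in\mathbb R^d}$ on the Skorokhod space such that, under $P_x$, the canonical process $X$ solves the $\mathcal{A}_1$-martingale problem with $X(0)=x$ almost surely, such that $x\mapsto P_x(X(t)\in A)$ is Borel measurable for every $t\geq 0$ and $A\in\mathcal{B}(\mathbb R^d)$, and such that $(X,(P_x))$ is strong Markov. Since $\mathcal{A}_1$ extends $\mathcal{A}_0$, every solution of the $\mathcal{A}_1$-martingale problem is \emph{a fortiori} a solution of the $\mathcal{A}_0$-martingale problem, yielding the first three assertions.

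For the final assertion, suppose that $X^1,X^2$ are two solutions of the $\mathcal{A}_0$-martingale problem sharing the initial law $\nu$ but with $P^{X^1}\neq P^{X^2}$. Using that the local martingale property of the processes $M_u$ from Definition \ref{Definition: Symbol} is preserved under conditioning on $\mathcal{F}_0=\sigma(X(0))$, the regular conditional distributions $P^{X^i}_x:=P^{X^i}(\,\cdot\,\mid X^i(0)=x)$ are, for $\nu$-almost every $x$, themselves solutions of the martingale problem started at $x$. If these disintegrations coincided $\nu$-a.s., integration against $\nu$ would yield $P^{X^1}=P^{X^2}$, a contradiction. Hence there is $x_0$ with $P^{X^1}_{x_0}\neq P^{X^2}_{x_0}$. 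Applying \cite[Theorem 4.5.19]{ethier.kurtz.86} twice, using the flexibility of its fiberwise selection to lock in $P^{X^1}_{x_0}$ resp.\ $P^{X^2}_{x_0}$ as the measure at the starting point $x_0$, produces two strong Markov families $(P^1_x),(P^2_x)$ that disagree at $x_0$ and are therefore distinct.

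The main obstacle is the justification in the last step that the Ethier--Kurtz selection procedure can indeed be steered to deliver a prescribed value at a single initial point $x_0$: the selection iteratively shrinks the (compact) set of admissible solutions at each $x$, and at $x_0$ the admissible set contains at least two elements, so either of the prescribed candidates can be retained throughout the construction without destroying the Markov property secured by 4.5.19. The auxiliary technical fact that a regular disintegration of a martingale-problem solution with respect to its initial value is again a martingale-problem solution relies on the canonical path space being Polish and is routine.
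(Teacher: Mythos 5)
Your first two assertions follow the same route as the paper: invoke \cite[Theorem 4.5.19]{ethier.kurtz.86} and its Markov selection. Two small points you gloss over: the theorem's part (d) gives the Markov property only at \emph{bounded} stopping times, so the strong Markov property at a general finite stopping time $\tau$ still requires an approximation $\tau\wedge n\to\tau$ (the paper does this via an $L^2$ argument for the conditional expectations), and the joint measurability of $(x,t)\mapsto P_x(X(t)\in A)$ needs a short argument combining right-continuity in $t$ with measurability in $x$. These are repairable.

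The last step, however, contains a genuine gap, and it is exactly the point you flag as ``the main obstacle'': the claim that the Ethier--Kurtz (Krylov) selection can be steered so as to retain a \emph{prescribed} element of $\Gamma_{\delta_{x_0}}$ at a single point $x_0$. This is unsubstantiated and false in general. The selection proceeds by lexicographically optimizing a countable family of functionals simultaneously over all starting points; the measure surviving at $x_0$ is forced to be an optimizer at every stage, and in addition must be consistent with the measures selected at all other points (restarting $P_{x_0}$ at a stopping time must reproduce $P_{X(\tau)}$). The conditional law $P^{X^1}_{x_0}$ you obtain by disintegration can be a nontrivial mixture of Markovian solutions, in which case it cannot occur in \emph{any} strong Markov family, let alone be ``locked in'' by the selection. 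So finding $x_0$ with $\Gamma_{\delta_{x_0}}$ non-singleton does not by itself yield two distinct Markov families. The paper circumvents this by working with the operator extensions rather than with individual measures: the proof of \cite[Theorem 4.5.19]{ethier.kurtz.86} produces, under non-uniqueness, two distinct \emph{maximal} extensions $\mathcal A\neq\mathcal B$ of $\mathcal A_0$. Choosing $(f,g)\in\mathcal B\setminus\mathcal A$, maximality of $\mathcal A$ together with dissipativity of the enlarged operator (\cite[Proposition 4.3.5]{ethier.kurtz.86}) forces $M(t)=f(X(t))-\int_0^t g(X(s))\,ds$ to fail to be a $P_x$-martingale for some $x$, whereas $M$ is a $Q_x$-martingale for every member of the family attached to $\mathcal B$; hence $P_x\neq Q_x$. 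If you want to keep your disintegration idea, you would still need an argument of this type to convert non-uniqueness of solutions into non-uniqueness of \emph{Markov selections}.
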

\begin{proof}
 {\em Step 1:}
 Let $\mathcal A$ be an extension of $\mathcal A_0$ as in \cite[Theorem 4.5.19]{ethier.kurtz.86}. Moreover, let $P_x \in \Gamma_{\delta_x}$ for $x\in \mathbb R^d$
 such that $P_x$ solves the $\A$-martingale problem, which exists by \cite[Theorem 4.5.19(c)]{ethier.kurtz.86}.
  We define the law $P(x,t,dy) := P_x(X(t) \in dy)$ for $x\in \mathbb R^d$, $t\geq 0$. Then $P(x,0,\cdot) = \delta_x$. 
Let $f:\mathbb R^d\to\mathbb R$ be continuous and bounded, which implies that 
$t\mapsto f(X(t))$ is right-continuous and bounded. 
By dominated convergence we have that $t\mapsto E_xf(X(t))$ is right-continuous as well. 
Since $x\mapsto E_x(f(X(t)))$ is measurable, $(x,t)\mapsto E_x(f(X(t))$ is measurable
by \cite[I.1.21 and I.1.26]{js.87}, applied to the right-continuous process $Y(x,t):=E_xf(X(t))$.
Since this holds for any continuous bounded $f$, we conclude that
$(x,t)\mapsto P(x,t,A)=P_x(X(t)\in A)$
is measurable for any $A\in\mathcal B(\mathbb R^d)$.

Let $s,t\geq 0$. \cite[Theorem 4.5.19(d)]{ethier.kurtz.86} yields
\begin{align*}
P(t+s,x,A)
&=  P_x( X(s+t)\in A)\\ 
&= E_x(E_x(1_A(X(s+t))|\mathcal F_t))\\
&= E_x(P_{X(t)}(X(s)\in A))\\
                                      &= \int P_y(X(s)\in A) P_x^{X(t)}(dy) \\
                                      &= \int P(y,s,A) P(x,t,dy)
\end{align*}
and hence $P$ is a transition function in the sense of \cite[Page 156]{ethier.kurtz.86}. 
  
{\em Step 2:}
From \cite[Theorem 4.5.19(d)]{ethier.kurtz.86} we also get that $X$ is a strong Markov process in the sense of \cite[Page 158]{ethier.kurtz.86} with transition function $P$.
 Indeed, let $P$ be a solution to the $\A$-martingale problem, $\tau$ a finite stopping time and $C\in\mathcal B(D_{\mathbb R^d}[0,\infty))$. 
 Then we have
$ E( 1_{\{X((\tau\wedge n)+\cdot)\in C\}}|\mathcal F_{\tau\wedge n}) = P_{X({\tau\wedge n})}(C) $
for any $n\in\mathbb N$
by \cite[Theorem 4.5.19(d)]{ethier.kurtz.86} because $\tau\wedge n$ is bounded.
For the strong Markov property we have to show that equality holds for $\tau$ instead of $\tau\wedge n$.
Clearly, $P_{X(\tau\wedge n)}(C)\to P_{X(\tau)}(C)$ pointwise for $n\rightarrow\infty$.
Moreover, $Y_n:=1_{\{X((\tau\wedge n)+\cdot)\in C\}} \rightarrow Y_\infty:=1_{\{X(\tau+\cdot)\in C\}}$ pointwise and hence in $L^2(P)$ for $n\rightarrow\infty$. We obtain
\begin{align*}
E( Y_n-Y_\infty |\mathcal F_{\tau\wedge n}) + E(Y_\infty|\mathcal F_{\tau\wedge n})
=E( Y_n|\mathcal F_{\tau\wedge n})
\to P_{X(\tau)}(C) 
\end{align*}
and $E( Y_n-Y_\infty |\mathcal F_{\tau\wedge n}) \rightarrow 0$ in $L^2(P)$ as $n\rightarrow \infty$ because
$$ E( |E( Y_n-Y_\infty |\mathcal F_{\tau\wedge n})|^2 ) \leq E(|Y_n-Y_\infty|^2) \rightarrow 0.$$
Moreover,
$E(Y_\infty|\mathcal F_{\tau\wedge n})= E(E(Y_\infty|\mathcal F_{\tau})|\mathcal F_{\tau\wedge n}) 
\rightarrow E(Y_\infty|\mathcal F_{\tau})$ in $L^2(P)$ when $n\rightarrow \infty$. 
Consequently,
 $$P_{X(\tau)}(C) = E(Y_\infty|\mathcal F_{\tau}).$$
 
{\em Step 3:} 
Now we turn to the last part of the claim. Here, we assume that $\Gamma_\nu$ contains more than one element for some Borel measure $\nu$ on $\mathbb R^d$. The proof of \cite[Lemma 4.5.19]{ethier.kurtz.86} actually constructs extensions $\mathcal A\not=\mathcal B$ given in \cite[Lemma 4.5.19(b)]{ethier.kurtz.86} which are maximal in the sense that any further extension of $\mathcal A$ or $\mathcal B$ does not meet \cite[Lemma 4.5.19(c)]{ethier.kurtz.86} any more.
  
Then, $\mathcal B \not\subseteq \mathcal A$ due to maximality. Thus there is $(f,g)\in\mathcal B$ with $(f,g)\notin\mathcal A$.
Assume by contradiction that $M(t) := f(X(t)) - \int_0^t g(X(s)) ds$ is a $P_x$-martingale for any $x\in \mathbb R^d$. Then $$\mathcal A_+ := \{(h+\lambda f,k+\lambda g):(h,k)\in\mathcal A,\lambda\in\mathbb R\}$$ is a strict extension of $\mathcal A$ which is a linear operator and such that the canonical process $X$ solves the $\mathcal A_+$-martingale problem under $P_x$ for any $x\in \mathbb R^d$. \cite[Proposition 4.3.5]{ethier.kurtz.86} yields that $\mathcal A_+$ is dissipative. This contradicts the maximality of $\mathcal A$.
We conclude that there is $x\in \mathbb R^d$ such that $M$ is not a $P_x$-martingale. 
However, 
$M$ is a $Q_x$-martingale if the canonical process $X$ is a solution 
to the $\mathcal B$-martingale problem under $Q_x$ for any $x\in \mathbb R^d$. Consequently, $P_x\neq Q_x$.
\end{proof}

\begin{proof}[Proof of Theorem \ref{Satz: Existenz unter Stetigkeit}]
{\em Step 1:}
Let $f$ be a real-valued Schwartz function and $\epsilon>0$.
Boundedness of $g$ implies
$$\sup_{x\in\mathbb R^d} F(x,\{y\in\mathbb R^d:\vert y\vert\geq a\}) < \epsilon$$
for some sufficiently large $a>0$
such that the support of $\chi$ is contained in $B(0,a)$.
Lemma \ref{l:Trippel Darstellung} yields
\begin{align*}
 \vert \A f(x)\vert &\leq \vert  \nabla f(x)\vert \left\vert b(x)+\int \left(y1_{B(0,a)}(y)-\chi(y)\right)F(x,dy)\right\vert \\
 &{}+ \frac{1}{2} \sup_{y\in B(x,a)}\vert Hf(y)\vert \left(\Tr(c(x))+\int_{B(0,a)} \vert y\vert^2 F(x,dy)\right) \\
  & + \int_{B(0,a)^c} \left\vert f(x+y)-f(x) \right\vert F(x,dy) \\
  &\leq \left(\vert \nabla f(x)\vert+\sup_{y\in B(x,a)}\vert Hf(y)\vert\right)g(x) + 2\sup_{y\in\mathbb R^d}\vert f(y)\vert \epsilon  \\
  &\underset{x\rightarrow\infty}{\rightarrow} 2\sup_{y\in\mathbb R^d}\vert f(y)\vert \epsilon
\end{align*}
for any $x\in\mathbb R^d$. Hence $\A f(x) \rightarrow 0$ for $x\rightarrow \infty$. 

Define $\tilde c(x):=c(x)+\int\chi(y)\chi(y)^\top F(x,dy)$. \cite[Theorem VII.2.9]{js.87} yields continuity of $b$, $\tilde c$ and $[\delta_{1,3}]$ in the sense of this theorem. 
Lemma \ref{l:Trippel Darstellung} and linearity of the trace imply
\begin{align*}
\A f(x) =&\ \nabla f(x)b(x) + \frac{1}{2}\mathrm{Tr}(Hf(x)\tilde c(x)) \\
  &+ \int_{} \left(f(x+y)-f(x)-\nabla f(x)\chi(y)- \frac{1}{2}\mathrm{Tr}(Hf(x)\chi(y)\chi(y)^T)\right)F(x,dy)
\end{align*}
for any $x\in\mathbb R^d$. Thus $x\mapsto \A f(x)$ is continuous. Together, this yields statement (3a).
Moreover, the indirect implication of statement (2) follows from Lemma  \ref{l:paulneu2}.

{\em Step 2:}
Let $f$ be any real-valued Schwartz function with maximum $x_0\in\mathbb R^d$, i.e.\ $\sup_{x\in\mathbb R^d}f(x)=f(x_0)$. Then $\nabla f(x_0)=0$ and $Hf(x_0)$ is negative semidefinite. Therefore
\begin{align*}
 \A f(x_0) =&\ \frac{1}{2}\mathrm{Tr}(Hf(x_0)c(x_0))+\int_{}\big(f(x_0+h)-f(x_0)\big)F(x_0,dh)\leq0,
\end{align*}
i.e., $\A$ satisfies the positive maximum principle in the sense of \cite[p.~165]{ethier.kurtz.86}, whence
statement (3b) holds.
Moreover, it is defined on a dense subset of $\hat C(\mathbb R^d)$
because its domain are the real-valued Schwartz functions.

{\em Step 3:}
Let $\phi:\mathbb R^d\rightarrow[0,1]$ be an infinitely differentiable function which is constant $1$ on the unit ball in $\mathbb R^d$ and whose support is contained in the centered ball with radius $2$. For any $n\in\mathbb N$ define the real-valued Schwartz function 
$$f_n:\mathbb R^d\rightarrow\mathbb R,\quad x\mapsto \phi(x/n).$$
Then $f_n\rightarrow 1$ pointwise and the second derivatives of $f_n$ are bounded by ${k}/{n^2}$, where $k$ is a common bound for the first two partial derivatives of $\phi$. Lemma \ref{l:Trippel Darstellung} yields
$$\A  f_n(x) = \int(f_n(x+y)-1)F(x,dy)$$
for $x\in\mathbb R^d$ and $n> \vert x\vert$. Due to a remainder estimate for the Taylor series we have
$$\vert f_n(x+y)-1\vert \leq \frac{k\vert y\vert^2}{2n^2}.$$
Thus the dominated convergence theorem yields
$$\vert \A  f_n(x)\vert\leq \int\frac{k\vert y\vert^2}{2n^2}F(x,dy)\rightarrow0,\quad n\rightarrow\infty,$$
whence $\A  f_n(x)\rightarrow 0$ pointwise. Similar arguments yield
$\vert \A  f_n(x)\vert\leq Kg(x)$
for any $x\in\mathbb R^d$, $n\in\mathbb N$ and some constant $K>0$ which does not depend on $x$ and $n$. Thus we have
$ f_n \rightarrow 1$ and $ \A  f_n \rightarrow 0$
for $n\rightarrow\infty$, where the convergence holds relative to the bp-topology, cf.\ \cite[p.~111]{ethier.kurtz.86}.
This implies statement (3c).
\cite[Theorem 4.5.4 and Remark 4.5.5]{ethier.kurtz.86} yield that for any probability measure $\mu$ on $\mathbb R^d$ there is
a solution to the martingale problem $(\A,\mu)$ in the sense of \cite[Section 4.3]{ethier.kurtz.86}.

{\em Step 4:}
In order to show that there are solutions to the $q$-martingale problem,
let $\mu$ be a probability measure and $X$ a solution to the martingale problem $(\A ,\mu)$
in the sense of \cite[Section 4.3]{ethier.kurtz.86}. 
Moreover, let $u\in\mathbb R^d$ and  $\phi:\mathbb R^d\rightarrow[0,1]$ as in Step 4.
Define 
$$ f_n(x) := e^{iux}\phi(x/n),\quad x\in\mathbb R^d,n\in\mathbb N.$$
Similarly as in Step 4 one shows that there is a bound $B<\infty$ such that $\vert f_n(x)\vert \leq 1$, $\vert \A f_n(x)\vert\leq B$ for any $x\in\mathbb R^d$, $n\in\mathbb N$, and
\begin{align*}
 f_n(x) &\rightarrow e^{iux},\\
 \A  f_n(x) &\rightarrow q(x,u)e^{iux}
\end{align*}
for any $x\in\mathbb R^d$. Thus 
 $$ M_{f_n}(t)\stackrel{n\to\infty}{\longrightarrow}M_u(t) := e^{iuX(t)} - \int_0^t e^{iuX(s)}q(X(s),u)ds$$
 a.s.\ for any $t\in\mathbb R_+$.
 By dominated convergence, $M_u$ is a martingale which shows that $X$ is a solution to the $q$-martingale problem.
Altogether, we obtain both the direct implication of statement (2) and statement (1).

 {\em Step 5:}
 The set of real-valued Schwartz functions is an algebra that separates points. Lemma \ref{l:zweites Moment} yields that that \cite[(4.5.33)]{ethier.kurtz.86} holds  for all choices of $K,\epsilon$ and $T$ where $K'$ can be chosen according to Chebyshev's inequality. Thus \cite[Theorem 4.5.11(b)]{ethier.kurtz.86} implies that the requirements of \cite[Theorem 4.5.19]{ethier.kurtz.86} are met for 
 $\Gamma:=\{P_\mu: P_\mu$ solves $(\A,\mu)$ for some probability measure $\mu$ on $\mathbb R^d\}$.
 Lemma \ref{l:paulneu} now yields statements (4) and (5).
 
 {\em Step 6:}
Let $x,u\in\mathbb R^d$ and $(P_x)_{x\in\mathbb R^d}$ measures on the canonical space such that the canonical process $X$ is a solution to the $q$-martingale problem with $X(0)=x$ $P_x$-a.s. for any $x\in\mathbb R^d$. Then
 $$ E_x(e^{iu X(t)}) =e^{iux} + E_x\left(\int_0^t e^{iu X(s)}q(X(s),u) ds\right),\quad t\geq 0.$$
 Hence right-continuity of $X$ in $0$ yields
 \begin{align*}
  \frac{E_x(e^{iu(X(t)-x)})-1}{t} &= \frac{1}{t}\int_0^t E_x(e^{iu(X(s)-x)}q(X(s),u) ds \\
                                 &\rightarrow E_x(e^{iu(X(0)-x)}q(X(0),u) \\
                                 &= q(x,u)
 \end{align*}
 for $t\downarrow0$, which is statement (6).
\end{proof}

\subsection{Proof of the uniqueness theorems}\label{proofs2}
The remainder of the paper is devoted to the proof of Theorems \ref{Satz: regulaeres Symbol} and  \ref{Satz: Eindeutigkeit unter Elliptizitaet neu}. 
 The idea  is as follows. We aim at proving uniqueness of univariate marginals in the Fourier domain, i.e.\ we show that for two solutions $X,Y$ the characteristic functions $\phi_{X(t)},\phi_{Y(t)}$ coincide. This will be done by a Gr\"onwall argument. We proceed in two steps. First we show that any solution can be approximated locally by a conditional L\'evy process, cf.\ Lemmas \ref{Lemma: Evolution der charakteristischen Funktion}, \ref{Lemma: Dynamik verschwindet}. Secondly we try to find bounds for the deviation rate of two piecewise L\'evy processes, which leads to Theorem \ref{Satz: regulaeres Symbol}. In order to derive Theorem \ref{Satz: Eindeutigkeit unter Elliptizitaet neu}, the conditions  are first verified for simple symbols. Moreover, the set of symbols meeting the requirements has a certain closedness property, cf.\ Lemma \ref{Proposition: Abgeschlossenheit}. Then we can deduce Lemma
\ref{Satz: Eindeutigkeit unter Fourier Bedingung} which states that uniqueness holds if the symbol can be locally approximated with a Fourier series satisfying some positivity condition. Finally, we construct such a Fourier series for elliptic symbols, cf.\ Lemma \ref{Satz: Eindeutigkeit unter Elliptizitaet}.

The localisation procedure for martingale problems reveals that uniqueness is a local property, cf.\ \cite[Section 4.6]{ethier.kurtz.86} and \cite[Theorem 3.28]{boettcher.al.13}. We restate a localisation theorem suitable for our applications with slightly different assumptions than the very related \cite[Theorem 3.28]{boettcher.al.13}; the proof however is basically the same.
\begin{prop}\label{S:Lokalisation}
 Let $q$ be a symbol such that existence holds for the $q$-martingale problem. Let $\U$ be an open covering for $\mathbb R^d$ and for all $U\in\U$ let $q_U$ be a symbol such that
\begin{enumerate}
 \item $q(x,u)=q_U(x,u)$ for any $x\in U,u\in\mathbb R^d$,
 \item existence and uniqueness holds for the $q_U$-martingale problem,
 \item $q(\cdot,u)$ is bounded for any $u\in\mathbb R^d$ and
 \item $q_U(\cdot,u)$ is bounded for any $u\in\mathbb R^d$, $U\in\mathcal U$.
\end{enumerate}
Then existence and uniqueness hold for the $q$-martingale problem.
\end{prop}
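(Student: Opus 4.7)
The plan is to follow the classical Stroock--Varadhan localisation scheme adapted to the symbol setting, in the spirit of \cite[Section 4.6]{ethier.kurtz.86} and \cite[Theorem 3.28]{boettcher.al.13}. Existence of $q$-solutions is part of the hypothesis, so only uniqueness needs to be proved. By decomposing a solution according to its initial distribution it suffices to show that, for every $x\in\mathbb R^d$, any two solutions $X,Y$ of the $q$-martingale problem with $X(0)=Y(0)=x$ have the same law on the Skorokhod space.

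Fix $U\in\U$ with $x\in U$ and set $\tau:=\inf\{t\ge 0:X(t)\notin U\text{ or }X(t-)\notin U\}$. For $s<\tau$ one has $X(s-)\in U$, hence $q(X(s-),u)=q_U(X(s-),u)$ for Lebesgue-almost every such $s$; consequently the $q$- and $q_U$-martingales associated with $X$ agree up to $\tau$, so $X^\tau$ satisfies a stopped $q_U$-martingale problem. I would then splice: by uniqueness of $q_U$ together with Theorem \ref{Satz: Existenz unter Stetigkeit}(4) there is a strong Markov family $(P^{U}_y)_{y\in\mathbb R^d}$ of $q_U$-solutions; on an enlarged probability space define $\tilde X$ to equal $X$ on $[0,\tau)$ and, conditional on $\F_\tau$, to evolve for $t\ge\tau$ as a fresh $q_U$-solution started at $X(\tau)$. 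An optional-stopping verification then shows that $\tilde X$ solves the $q_U$-martingale problem globally, so uniqueness for $q_U$ fixes its law, and hence also the law of $X^\tau$. The analogous construction for $Y$ gives $X^\tau\stackrel{d}{=}Y^{\tau'}$, where $\tau'$ is defined from $Y$ in the obvious way.

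Equality of these stopped laws implies in particular $(\tau,X(\tau))\stackrel{d}{=}(\tau',Y(\tau'))$. To pass from local to global uniqueness, I would extract a countable subcover $(U_n)\subset\U$ (second countability of $\mathbb R^d$) and iterate: at successive exit times $\tau_1<\tau_2<\ldots$ from members of the cover, splice the post-$\tau_k$ path against a fresh $q_{U_{n_k}}$-solution started at the current position and use the strong Markov property of that splice to inherit, inductively, agreement of the finite-dimensional distributions of $X$ and $Y$ on each stratum $[\tau_k,\tau_{k+1}]$. Since on any finite interval $[0,T]$ the càdlàg paths of $X$ and $Y$ have compact image, hence are covered by finitely many $U_n$, the sequence $\tau_k$ cannot accumulate in $[0,T]$, so finite-dimensional agreement extends to $[0,\infty)$, giving equality in law on the Skorokhod space.

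The main obstacle is the splicing step: verifying that the concatenated process $\tilde X$ truly satisfies the $q_U$-martingale problem on all of $[0,\infty)$ and not merely piecewise before and after $\tau$. This is handled by decomposing $M^{q_U}_u(\tilde X)$ into $M^{q_U}_u(\tilde X)^\tau$ and its post-$\tau$ increment: the first equals $M^{q}_u(X)^\tau$ by the identity $q=q_U$ on $U$, and is a local martingale because $X$ solves the $q$-problem; the second is, conditional on $\F_\tau$, a $q_U$-martingale by construction of the splice and the strong Markov property of $(P^{U}_y)_y$. Boundedness assumptions (3) and (4) ensure that the compensating integrals on both sides are locally bounded, so that optional stopping applies cleanly without further localisation. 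A minor technical concern, possible accumulation of the iterated exit times, is controlled by the càdlàg property and the Lindelöf reduction as indicated above.
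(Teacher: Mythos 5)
Your argument is correct and is essentially the paper's: the paper rewrites the $q$- and $q_U$-martingale problems as martingale problems for $\mathcal B=\{(e^{iu\cdot},q(\cdot,u)e^{iu\cdot}):u\in\mathbb R^d\}$ and $\mathcal B_U$ (conditions (3)--(4) providing the boundedness needed for this framework), observes that the stopped problems $(\mathcal B,\mu,U)$ and $(\mathcal B_U,\mu,U)$ coincide, and then invokes \cite[Theorems 4.6.1 and 4.6.2]{ethier.kurtz.86} for precisely the splicing and iteration you carry out by hand. The one place your sketch is looser than the cited machinery is the non-accumulation of the iterated exit times (compactness of the path's image alone does not prevent accumulation), but this is handled inside the proof of \cite[Theorem 4.6.2]{ethier.kurtz.86}.
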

\begin{proof}
 W.l.o.g.\ we may assume that $\U$ is countable. Let $\mu$ be a probability measure on $\mathbb R^d$ and $U\in\U$. Define
   \begin{align*}
      \mathcal B := \{ (e^{iu\cdot},q(\cdot,u)e^{iu\cdot}): u\in\mathbb R^d \}, \\
      \mathcal B_U := \{ (e^{iu\cdot},q_U(\cdot,u)e^{iu\cdot}): u\in\mathbb R^d \}
   \end{align*}
  for any $U\in\mathcal U$. Observe that a stochastic c\`adl\`ag process is a solution to the $\mathcal B$-martingale problem if and only if it is a solution to the $q$-martingale problem. In particular, existence holds for the $\mathcal B$-martingale problem. Let $U\in\mathcal U$ and observe that a stochastic process is a solution to the stopped martingale problem $(\mathcal B,\mu,U)$ if and only if it is a solution to the stopped martingale problem $(\mathcal B_U,\mu,U)$
  in the sense of \cite[Page 216]{ethier.kurtz.86}. 
  
  Moreover, \cite[Theorem 4.6.1]{ethier.kurtz.86} yields that the stopped martingale problem $(\mathcal B_U,\mu,U)$ has unique solutions. Hence $(\mathcal B,\mu,U)$ has unique solutions for any $U\in\mathcal U$. \cite[Theorem 4.6.2]{ethier.kurtz.86} yields uniqueness for the martingale problem $(\mathcal B,\mu)$. Since $\mu$ was arbitrary, we have uniqueness for the $q$-martingale problem.
\end{proof}

The next result is a Gr\"onwall-type theorem with perturbation which will be useful later. 
\begin{lem}\label{Proposition: Groenwall}
Let $I=[0,T]$, $c\in(0,\infty)$, and $\beta:\mathbb R_+\rightarrow\mathbb R_+$ such that $\lim_{t\rightarrow0}{\beta(t)}/{t}=0$. Let $\phi:\mathbb R_+\rightarrow\mathbb R_+$ such that for all $s,t\in I$ with $s<t$ we have
$$\phi(t)\leq (1+(t-s)c)\phi(s)+\beta(t-s).$$
Then $\phi(t)\leq \phi(0)e^{ct}$ for all $t\in I$. In particular, $\phi=0$ if $\phi(0)=0$.
\end{lem}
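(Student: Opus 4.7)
The plan is to iterate the given one-step inequality over a uniform partition of $[0,t]$ and pass to the limit, exploiting the assumption $\beta(s)/s \to 0$ to kill the perturbation term. This is the standard way to turn a discrete Grönwall estimate with additive error into a sharp exponential bound without losing a constant in front of $\phi(0)$.

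Concretely, fix $t\in I$ and $n\in\mathbb N$, and set $h:=t/n$ together with $t_k:=kh$ for $k=0,\ldots,n$. Applying the hypothesis to each pair $(t_{k-1},t_k)$ gives
$$\phi(t_k)\leq (1+ch)\phi(t_{k-1})+\beta(h).$$
A straightforward induction on $k$ then yields
$$\phi(t)=\phi(t_n)\leq (1+ch)^n\phi(0)+\beta(h)\sum_{k=0}^{n-1}(1+ch)^k=(1+ch)^n\phi(0)+\frac{\beta(h)}{ch}\bigl((1+ch)^n-1\bigr).$$

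Now let $n\to\infty$, so that $h\to 0$. Standard calculus gives $(1+ch)^n=(1+ct/n)^n\to e^{ct}$, and the hypothesis $\lim_{h\to 0}\beta(h)/h=0$ together with boundedness of $(1+ch)^n-1$ on $[0,T]$ implies that the second summand tends to $0$. Hence $\phi(t)\leq\phi(0)e^{ct}$, and if $\phi(0)=0$ then $\phi\equiv 0$ on $I$.

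There is no real obstacle here; the only point worth checking carefully is that the additive error accumulates only as a geometric sum (not as something like $n\beta(h)$ without the damping factor $h$), which is precisely why the factorisation $\beta(h)/(ch)\cdot((1+ch)^n-1)$ is useful: the factor $\beta(h)/h$ vanishes while $(1+ch)^n-1$ stays bounded by $e^{cT}-1$.
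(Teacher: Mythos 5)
Your proof is correct and follows essentially the same route as the paper's: iterate the one-step inequality over a uniform partition, bound $(1+ct/n)^n$ by $e^{ct}$, and observe that the accumulated error $\beta(t/n)\sum_{k}(1+ct/n)^k = \frac{\beta(t/n)}{ct/n}\bigl((1+ct/n)^n-1\bigr)$ vanishes because $\beta(h)/h\to 0$ while the other factor stays bounded. Your factorisation of the geometric sum is just a rearrangement of the paper's estimate $\sum_{k=0}^{N-1}(1+tc/N)^k\leq N\frac{e^{tc}-1}{tc}$ combined with $N\beta(t/N)\to 0$, so the two arguments coincide.
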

\begin{proof}
 Let $t\in\mathbb R_+$ and $N\in\mathbb N$. The inequality above yields
$$\phi\left(t\frac{n+1}{N}\right)\leq (1+tc/N)\phi\left(t\frac{n}{N}\right)+\beta(t/N)$$
for $n=0,\dots,N-1$.
Hence 
$$\phi(t)\leq (1+tc/N)^N\phi(0)+\sum_{k=0}^{N-1}(1+tc/N)^k\beta(t/N).$$
Since $(1+tc/N)^N\leq\exp(tc)$, we have
$$\phi(t)\leq\exp(tc)\phi(0)+\sum_{k=0}^{N-1}(1+tc/N)^k\beta(t/N).$$
The geometric series sums up to
$$\sum_{k=0}^{N-1}(1+tc/N)^k=\frac{(1+tc/N)^{N}-1}{tc/N}\leq N\frac{e^{tc}-1}{tc}.$$
However, $N\beta(t/N)$ converges to $0$ for $N\rightarrow\infty$. Hence 
$$\sum_{k=0}^{N-1}(1+tc/N)^k\beta(t/N)\rightarrow0.$$
We conclude
$\phi(t)\leq\exp(tc)\phi(0)$
as desired.
\end{proof}

In the sequel we will work with the norm
$$\Vert\cdot\Vert:\hat C(\mathbb R^d,\mathbb C)\rightarrow\mathbb R_+,\quad \phi\mapsto\sup\left\{\frac{\vert\phi(u)\vert}{1+\vert u\vert^2}:u\in\mathbb R^d\right\}.$$
\begin{rem}
A sequence of characteristic functions which converges with respect to ${\Vert\cdot\Vert}$, converges uniformly on compact sets. L\'evy's continuity theorem \cite[Theorem 19.1]{jacod.protter.04} yields weak convergence of the corresponding sequence of random variables.
\end{rem}

As mentioned above, the proof of Theorem \ref{Satz: regulaeres Symbol} is based on local comparison to conditional L\'evy processes.
\begin{lem}[Comparison to conditional L\'evy process I]\label{Lemma: Evolution der charakteristischen Funktion}
 Let $q$ be a continuous symbol such that $q(\cdot,u)$ is bounded for all $u\in\mathbb R^d$. Moreover, let  $X$ be a solution to the $q$-martingale problem and for $s\geq 0$ let $Q_s$ be a regular version of the conditional law of $X$ given $\mathcal F_s$, i.e.\
  \begin{itemize}
   \item $Q_s:\Omega\times \mathcal B(D_{\mathbb R^d}[0,\infty))\rightarrow [0,1]$ is a transition kernel from $(\Omega,\mathcal F_s,P)$ to $\mathbb D:=D_{\mathbb R^d}[0,\infty)$ and
   \item \sloppy{$E( f(X)|\mathcal F_s) = \int f(\rho) Q_s(d\rho)$ for any bounded measurable function $f:\mathbb D\rightarrow\mathbb C$}.
  \end{itemize}
Define
$$\Gamma(t,s,u):=\int_s^t e^{(t-r)q(X(s),u)} \int_{\mathbb D} e^{iu\rho(r)} (q(\rho(r),u)-q(\rho(s),u))Q_s(d\rho) dr$$
for $s,t\in\mathbb R_+,u\in\mathbb R^d$. Then we have
$$\phi_X(t,u)=E\left(\exp((t-s)q(X(s),u)+iuX(s))\right)+E(\Gamma(t,s,u))$$
for any $t,s\in\mathbb R_+,u\in\mathbb R^d$ with $s<t$,
where $\phi_X(t,u):=E(e^{iuX(t)})$. Moreover,
$$\vert E\Gamma(t,s,u)\vert\leq \int_s^t E\vert q(X(r),u)-q(X(s),u)\vert dr$$
for any $x,u\in\mathbb R^d$, $0\leq s\leq t<\infty$.
\end{lem}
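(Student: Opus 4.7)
The plan is to fix $u\in\mathbb R^d$ and $s\ge 0$ and work with the conditional characteristic function $\psi(t):=E(e^{iuX(t)}\mid\mathcal F_s)$. The starting point is that since $q(\cdot,u)$ is bounded by assumption, the local martingale
$$M_u(t)=e^{iuX(t)}-\int_0^t q(X(r),u)e^{iuX(r)}dr$$
is uniformly bounded on compact time intervals and therefore a genuine martingale. Taking conditional expectations relative to $\mathcal F_s$, applying Fubini, and expressing the conditional expectation through the regular conditional law $Q_s$ yields the integral equation
$$\psi(t)=e^{iuX(s)}+\int_s^t\int_{\mathbb D}q(\rho(r),u)e^{iu\rho(r)}Q_s(d\rho)\,dr.$$

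Next I would \emph{freeze} the coefficient at $\alpha:=q(X(s),u)$, using that $\rho(s)=X(s)$ for $Q_s$-a.e.\ $\rho$. Writing $q(\rho(r),u)=\alpha+(q(\rho(r),u)-q(\rho(s),u))$ and noting $\int_{\mathbb D}e^{iu\rho(r)}Q_s(d\rho)=\psi(r)$ turns the identity into
$$\psi(t)=e^{iuX(s)}+\alpha\int_s^t\psi(r)\,dr+\int_s^t h(r)\,dr,$$
where $h(r):=\int_{\mathbb D}e^{iu\rho(r)}(q(\rho(r),u)-q(\rho(s),u))Q_s(d\rho)$. This is a linear inhomogeneous ODE in $\psi$ with $\mathcal F_s$-measurable constant coefficient $\alpha$, whose solution via the integrating factor $e^{-\alpha r}$ is
$$\psi(t)=e^{\alpha(t-s)}e^{iuX(s)}+\int_s^t e^{\alpha(t-r)}h(r)\,dr.$$
The last integral equals $\Gamma(t,s,u)$, so taking unconditional expectations (using the tower property $E\psi(t)=\phi_X(t,u)$) yields the first formula of the lemma.

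For the bound on $\vert E\Gamma(t,s,u)\vert$, I would use that every L\'evy exponent has nonpositive real part, as can be read off directly from the L\'evy-Khintchine representation in Definition \ref{Definition: Symbol}. Consequently $\vert e^{(t-r)q(X(s),u)}\vert\le 1$ whenever $r\le t$, and combined with $\vert e^{iu\rho(r)}\vert=1$ this lets me pull absolute values inside both integrals. Fubini together with the defining property of $Q_s$ then gives
$$\vert E\Gamma(t,s,u)\vert\le\int_s^t E\!\left(\int_{\mathbb D}\vert q(\rho(r),u)-q(\rho(s),u)\vert Q_s(d\rho)\right)dr=\int_s^t E\vert q(X(r),u)-q(X(s),u)\vert\,dr,$$
which is the desired inequality.

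The main obstacle is really just bookkeeping: justifying that $M_u$ is a true martingale (which follows from boundedness of $q(\cdot,u)$), validating the Fubini interchanges, and correctly invoking $Q_s$ as a regular conditional version so that the path-functional $\rho\mapsto q(\rho(r),u)e^{iu\rho(r)}$ may be integrated against it. All of these steps ride on the boundedness hypothesis on $q(\cdot,u)$, which dominates every integrand in sight by an integrable constant on any compact time interval.
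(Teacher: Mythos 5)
Your proposal is correct and follows essentially the same route as the paper: the martingale property yields the integral equation for the conditional characteristic function $\psi(t)=\int_{\mathbb D}e^{iu\rho(t)}Q_s(d\rho)$, the coefficient is frozen at $q(X(s),u)$, variation of constants produces the representation with remainder $\Gamma$, and the bound follows from $\Re(q)\le 0$ together with Fubini and the tower property. The only stylistic difference is that the paper spells out the continuity (hence differentiability) of $t\mapsto\psi(t)$ before invoking variation of constants, a bookkeeping point you correctly flag as needing justification.
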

\begin{proof}
Let $s\in\mathbb R_+,u\in\mathbb R^d$. For all $t\in[s,\infty)$ we have
\begin{align*}
 \phi_s(t,u) &:= \int_{\mathbb D} e^{iu\rho(t)}Q_s(d\rho) \\
&= M_u(s)+\int_{\mathbb D} \int_0^tq(\rho(r),u)\exp(iu\rho(r))dr Q_s(d\rho)\\
&= M_u(s)+\int_0^sq(X(r),u)\exp(iuX(r))dr\\
&\ \ +\int_s^t \int_{\mathbb D} q(\rho(r),u)\exp(iu\rho(r))Q_s(d\rho) dr\\
&= e^{iuX(s)}+\int_s^t \int_{\mathbb D} q(\rho(r),u)\exp(iu \rho(r)) Q_s(d\rho) dr.
\end{align*}
We can see that $t\mapsto \phi_s(t,u)$ is $P$-a.s.\ continuous. Thus the canonical process  
on $(\mathbb D,\mathcal B(\mathbb D),Q_s(\omega,\cdot))$ is weakly continuous
for $P$-almost every $\omega\in\Omega$. Consequently,
 $$ r\mapsto \int_{\mathbb D} q(\rho(r),u)\exp(iu \rho(r)) Q_s(\omega,d\rho) $$
is continuous for $P$-almost every $\omega\in\Omega$. This shows that $t\mapsto \phi_s(t,u)$ is continuously differentiable for $P$-almost every $\omega\in\Omega$.

The fundamental theorem of calculus yields
\begin{align*}
\partial_t\phi_s(t,u) &= \int_{\mathbb D} q(\rho(t),u)\exp(iu\rho(t)) Q_s(d\rho)\\
 &= \phi_s(t,u)q(X(s),u) + g(s,t)
\end{align*}
 for all $t>s$ $P$-a.e.\ where 
   $$ g(s,t):= \int_{\mathbb D} \exp(iu\rho(t))(q(\rho(t),u)-q(\rho(s),u)) Q_s(d\rho).$$ 
 Moreover, we have $\phi_s(s,u)=e^{iuX(s)}$. The variation of constants formula \cite[Satz 98.5]{heuser.90a}
 implies
\begin{align*}
  \phi_s(t,u) &= e^{(t-s)q(X(s),u)+iuX(s)}+\int_s^t e^{(t-r)q(X(s),u)} g(s,r) dr \\
              &= e^{(t-s)q(X(s),u)+iuX(s)}+\Gamma(t,s,u).
\end{align*}
 Thus we obtain
 \begin{align*}
     E(e^{iuX(t)}) &= E(\phi_s(t,u)) \\
                    &= E\left( e^{(t-s)q(X(s),u)+iuX(s)} \right) + E(\Gamma(t,s,u)).
  \end{align*}
 Finally, we have
  \begin{align*}
    \vert E(\Gamma(t,s,u)) \vert &\leq E\left( \int_s^t \int_{\mathbb D} \vert (q(\rho(r),u)-q(\rho(s),u)) \vert Q_s(d\rho)dr \right) \\
       &= \int_s^t E\left( \vert (q(X(r),u)-q(X(s),u)) \vert \right) dr
  \end{align*}
  for any $t\geq s$.
\end{proof}

\begin{lem}[Comparison to conditional L\'evy process II]\label{Lemma: Dynamik verschwindet}
 Let $q$ be a Hölder continuous symbol. 
 Moreover, let $I\subset\mathbb R_+$ be a bounded interval and $X$ a solution to the $q$-martingale problem. Then there is a function $\beta:\mathbb R_+\rightarrow\mathbb R_+$ such that $\lim_{t\rightarrow 0}\beta(t)/t=0$ and 
$$\int_s^t\frac{E\vert q(X(r),u)-q(X(s),u)\vert}{1+\vert u\vert^2}dr\leq \beta(t-s)$$
for all $s,t\in I,u\in\mathbb R^d$ with $s<t$.
\end{lem}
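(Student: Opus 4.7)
The strategy is to reduce the claim to a joint continuity property of the function $F(s,r):=E f(X(r)-X(s))$ on the compact square $I\times I$ and then to extract a uniform modulus at the diagonal from compactness alone. Concretely, the pointwise H\"older bound
\[\frac{\vert q(X(r),u)-q(X(s),u)\vert}{1+\vert u\vert^2}\leq f(X(r)-X(s))\]
gives after integration in $r$
\[\int_s^t\frac{E\vert q(X(r),u)-q(X(s),u)\vert}{1+\vert u\vert^2}\,dr\leq\int_s^t F(s,r)\,dr,\]
so it suffices to produce $\beta$ with $\beta(h)/h\to 0$ such that $\int_s^t F(s,r)\,dr\leq\beta(t-s)$. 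The H\"older modulus $f$ must satisfy $f(0)=0$, which is the natural normalisation for a continuity modulus and which we assume here; otherwise $F$ would be constantly equal to $f(0)>0$ on the diagonal and the conclusion would fail.

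I next claim that $F$ is continuous on the compact set $I\times I$. By Proposition \ref{Satz: Semimartingaleigenschaft}, $X$ is c\`adl\`ag and quasi-left continuous, so $\Delta X(t)=0$ almost surely for every deterministic $t\in I$ (since deterministic times are predictable). Consequently $X(s_n)\to X(s)$ in probability whenever $s_n\to s$ in $I$: for $s_n\downarrow s$ this is right-continuity of $X$, and for $s_n\uparrow s$ one has $X(s_n)\to X(s-)=X(s)$ almost surely. Therefore $(s_n,r_n)\to(s_0,r_0)$ implies $X(r_n)-X(s_n)\to X(r_0)-X(s_0)$ in probability, and continuity and boundedness of $f$ combined with bounded convergence give $F(s_n,r_n)\to F(s_0,r_0)$.

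Since $F$ is continuous on the compact $I\times I$ and vanishes on the diagonal ($F(s,s)=f(0)=0$), uniform continuity supplies, for every $\epsilon>0$, some $h_\epsilon>0$ with $F(s,r)<\epsilon$ whenever $s,r\in I$ and $\vert r-s\vert<h_\epsilon$. Hence
\[\alpha(h):=\sup\{F(s,r):s,r\in I,\,0\leq r-s\leq h\}\]
tends to $0$ as $h\to 0$, and $\beta(h):=h\,\alpha(h)$ satisfies both $\int_s^t F(s,r)\,dr\leq(t-s)\alpha(t-s)=\beta(t-s)$ and $\beta(h)/h=\alpha(h)\to 0$. Apart from compactness, the sole ingredient used is quasi-left continuity of $X$: this is precisely what rules out a fixed jump at the deterministic time $s$ and thereby upgrades the pointwise right-continuity of $r\mapsto F(s,r)$ at $r=s$ to joint continuity of $F$ on $I\times I$, where the main (minor) delicacy of the argument lies.
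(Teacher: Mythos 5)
Your argument is correct and is essentially the paper's own proof: both reduce the claim via the H\"older bound to the function $H(s,r)=E\bigl(f(X(r)-X(s))\bigr)$, invoke quasi-left continuity to get its (joint) continuity with $H(s,s)=0$, and set $\beta(t)=t\sup\{H(s,r):s,r\in I,\ |r-s|\le t\}$. You merely spell out two points the paper leaves implicit, namely the convergence-in-probability argument behind the continuity of $H$ and the normalisation $f(0)=0$ (which indeed must be assumed, and which holds for the modulus actually constructed in Lemma \ref{Lemma: fHoelder stetig}).
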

\begin{proof}
 Let $f$ be a bounded and continuous function such that
$$\vert q(x,u)-q(y,u)\vert\leq f(x-y)(1+\vert u\vert^2),\quad x,y,u\in\mathbb R^d.$$
Then 
$$\frac{E\vert q(X(r),u)-q(X(s),u)\vert}{1+\vert u\vert^2}\leq E(f(X(r)-X(s))).$$
Proposition \ref{Satz: Semimartingaleigenschaft} states that $X$ is quasi-left continuous. Hence
$$H:\mathbb R_+\times\mathbb R_+\rightarrow\mathbb R_+,(r,s)\rightarrow E(f(X(r)-X(s)))$$
is continuous and $H(s,s)=0$ for all $s\in\mathbb R_+$. The mean value theorem theorem yields the claim
for $\beta(t):=t\sup\{H(r,s):r,s\in I$ and $|r-s|\leq t\}$.
\end{proof}

We can now show that the univariate marginals of solutions 
to the martingale problem are uniquely determined under certain conditions.
\begin{lem}\label{Proposition: 1d Eindeutigkeitssatz}
 Let $q\in C(\mathbb R^d\times\mathbb R^d,\mathbb C)$ be a continuous and Hölder-continuous symbol. Moreover, let $K\in\mathbb R_+$ and $I=[0,t_{0}]$ for some $t_0>0$. Assume that for any $t\in I,u\in\mathbb R^d$ there is a complex measure $P_{t,u}$ such that
\begin{enumerate}
 \item $\widehat{P_{t,u}}(x)=e^{tq(x,u)}$ for all $x\in\mathbb R^d$ and
 \item $\int_{}\frac{1+\vert u+v\vert^2}{1+\vert u\vert^2}\vert P_{t,u}\vert(dv)\leq 1+Kt$.
\end{enumerate}
If $X,Y$ are solutions to the $q$-martingale problem with the same initial distribution, $X(t)$ and $Y(t)$ have the same distribution for all $t\in I$.
\end{lem}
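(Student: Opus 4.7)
The plan is to compare the Fourier transforms of the univariate marginals of $X$ and $Y$ via the norm $\|\cdot\|$ and to close a Grönwall estimate using Lemmas \ref{Lemma: Evolution der charakteristischen Funktion}, \ref{Lemma: Dynamik verschwindet}, and the hypothesis on $P_{t,u}$. Write $\phi_X(t,u):=E(e^{iuX(t)})$ and $\phi_Y(t,u):=E(e^{iuY(t)})$, and set $\Delta(t):=\|\phi_X(t,\cdot)-\phi_Y(t,\cdot)\|$. Note $\Delta(0)=0$ since $X(0)$ and $Y(0)$ are equal in law.

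The key identity is the Fourier inversion produced by hypothesis (1): for any $x\in\mathbb R^d$, $s\leq t$ in $I$, and $u\in\mathbb R^d$,
\[
  e^{(t-s)q(x,u)}e^{iux} \;=\; \int e^{i(u+v)x}\,P_{t-s,u}(dv).
\]
Applying this with $x=X(s)$ and taking expectations turns the main term in Lemma \ref{Lemma: Evolution der charakteristischen Funktion} into an integral against $\phi_X(s,\cdot)$:
\[
  E\bigl(e^{(t-s)q(X(s),u)+iuX(s)}\bigr) \;=\; \int \phi_X(s,u+v)\,P_{t-s,u}(dv),
\]
and likewise for $Y$. Thus Lemma \ref{Lemma: Evolution der charakteristischen Funktion} yields
\[
  \phi_X(t,u)-\phi_Y(t,u) \;=\; \int \bigl(\phi_X(s,u+v)-\phi_Y(s,u+v)\bigr)P_{t-s,u}(dv) \;+\; E\Gamma_X(t,s,u)-E\Gamma_Y(t,s,u).
\]

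Dividing by $1+|u|^2$ and using the trivial bound $|\phi_X(s,w)-\phi_Y(s,w)|\leq \Delta(s)(1+|w|^2)$ together with hypothesis (2) applied to $|P_{t-s,u}|$ gives
\[
  \frac{1}{1+|u|^2}\Bigl|\int (\phi_X(s,u+v)-\phi_Y(s,u+v))P_{t-s,u}(dv)\Bigr|
  \;\leq\; \Delta(s)\int \frac{1+|u+v|^2}{1+|u|^2}\,|P_{t-s,u}|(dv)
  \;\leq\; (1+K(t-s))\Delta(s).
\]
For the remainder, Lemma \ref{Lemma: Evolution der charakteristischen Funktion} bounds $|E\Gamma_X(t,s,u)|$ by $\int_s^t E|q(X(r),u)-q(X(s),u)|\,dr$, and dividing by $1+|u|^2$ Lemma \ref{Lemma: Dynamik verschwindet} (applied to each of $X$ and $Y$, taking $\beta$ to be the sum of the two envelopes) gives the uniform-in-$u$ bound $\beta(t-s)$ with $\beta(h)/h\to 0$.

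Taking the supremum over $u\in\mathbb R^d$ we obtain
\[
  \Delta(t) \;\leq\; (1+K(t-s))\,\Delta(s) + \beta(t-s), \qquad 0\leq s\leq t,\ s,t\in I.
\]
Since $\Delta(0)=0$, Lemma \ref{Proposition: Groenwall} forces $\Delta\equiv 0$ on $I$. Hence $\phi_X(t,\cdot)=\phi_Y(t,\cdot)$ for every $t\in I$, and by Lévy's uniqueness theorem $X(t)$ and $Y(t)$ have the same law.

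The main technical point is justifying the Fourier identity under an expectation (measurability and integrability of $\int e^{i(u+v)X(s)}P_{t-s,u}(dv)$ in $\omega$, and Fubini to pull $E$ inside the $v$-integral). This is harmless because $|P_{t-s,u}|$ has finite total mass on each compact set and $|\phi_X|\leq 1$, so one may either appeal to Fubini directly or verify the identity first for a dense subclass (e.g.\ approximating $P_{t,u}$ by its restrictions to balls) and pass to the limit.
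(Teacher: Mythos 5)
Your proposal is correct and follows essentially the same route as the paper's own proof: the same Fubini identity $E(e^{(t-s)q(X(s),u)+iuX(s)})=\int\phi_X(s,u+v)\,P_{t-s,u}(dv)$, the same bound via hypothesis (2) in the norm $\Vert\cdot\Vert$, the same use of Lemmas \ref{Lemma: Evolution der charakteristischen Funktion} and \ref{Lemma: Dynamik verschwindet} for the remainder, and the same Grönwall conclusion. The only (welcome) addition is your explicit remark on justifying the Fubini step, which the paper passes over in one line.
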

\begin{proof} 
Observe that condition (2) implies that the total variation measure $\vert P_{t,u}\vert$ is finite. Define $d(t):=\Vert \phi_X(t,\cdot)-\phi_Y(t,\cdot)\Vert$ for all $t\in\mathbb R_+$ where $\phi_X(t,\cdot)$ and $\phi_Y(t,\cdot)$ denote the characteristic functions of $X(t)$ resp.\ $Y(t)$. Let $g_{t,u}(x):=e^{tq(x,u)+iux}$. Lemmas \ref{Lemma: Evolution der charakteristischen Funktion}, \ref{Lemma: Dynamik verschwindet} yield
$$d(t)\leq \sup\left\{\frac{\vert E\left(g_{t-s,u}(X(s))\right)-E\left(g_{t-s,u}(Y(s))\right)\vert}{1+\vert u\vert^2}:u\in\mathbb R^d\right\}+\beta(t-s)$$
for all $s,t\in I$ with $s<t$, where $\beta$ is a function as in Lemma \ref{Lemma: Dynamik verschwindet} with $\lim_{t\rightarrow0}\beta(t)/t=0$. Moreover, condition (1) and Fubini's theorem imply
\begin{eqnarray*}
E\left(g_{t-s,u}(X(s))\right)
&=&\int\int e^{ivX(s)}P_{t-s,u}(dv)e^{iuX(s)}dP\\
&=&\int_{}\phi_X(s,u+v)P_{t-s,u}(dv)
\end{eqnarray*}
and likewise for $Y$.
We obtain
\begin{align*}
d(t)\leq&\ d(s)\sup\left\{\int_{}\frac{1+\vert u+v\vert^2}{1+\vert u\vert^2}\vert P_{t-s,u}\vert(dv):u\in\mathbb R^d\right\}+\beta(t-s)\\ 
  \leq&\ d(s)(1+K(t-s))+\beta(t-s).
\end{align*}
By Lemma \ref{Proposition: Groenwall}  we have $d(t)=0$ for all $t\in I$. Thus the characteristic functions of $X(t)$ and $Y(t)$ coincide, whence  they have the same law.
\end{proof}

\begin{cor}\label{Satz: Eindeutigkeitssatz}
 Assume that the requirements of Lemma \ref{Proposition: 1d Eindeutigkeitssatz} are fulfilled and that existence holds for the $q$-martingale problem. Then uniqueness holds for the $q$-martingale problem.
\end{cor}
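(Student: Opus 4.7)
The plan is to first upgrade the one-dimensional marginal uniqueness of Lemma \ref{Proposition: 1d Eindeutigkeitssatz} from the interval $[0,t_0]$ to all of $\mathbb R_+$ by a time-shifting argument, and then to pass from univariate marginals to the full law on the Skorokhod space by a standard conditioning procedure.

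For the time-shift step, observe that if $X$ solves the $q$-martingale problem with respect to $(\mathcal F_t)_{t\geq 0}$, then for any $s\geq 0$ the process $\tilde X(t):=X(s+t)$ is again a solution, now with respect to $(\mathcal F_{s+t})_{t\geq 0}$ and with initial law $P^{X(s)}$; a short calculation shows that the associated martingale equals
\[
\tilde M_u(t)=\bigl(M_u(s+t)-M_u(s)\bigr)+e^{iuX(s)},
\]
which is the sum of a local $(\mathcal F_{s+t})$-martingale and an $\mathcal F_s$-measurable (hence $\tilde{\mathcal F}_0$-measurable) random variable. Consequently, for two solutions $X,Y$ with the same initial law, the implication
\[
P^{X(s)}=P^{Y(s)} \quad\Longrightarrow\quad P^{X(s+t)}=P^{Y(s+t)}\ \text{for every }t\in[0,t_0]
\]
follows by applying Lemma \ref{Proposition: 1d Eindeutigkeitssatz} to $\tilde X,\tilde Y$. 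Starting from $P^{X(0)}=P^{Y(0)}$ and iterating over the successive intervals $[nt_0,(n+1)t_0]$, induction on $n$ yields coincidence of univariate marginals on all of $\mathbb R_+$, and the same reasoning applies for every initial distribution.

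The passage from one-dimensional marginal uniqueness (for all initial laws) to uniqueness of the full law on the Skorokhod space is classical, cf.\ \cite[Theorem 4.4.2]{ethier.kurtz.86}. The key input is that the regular conditional distribution of $X(s+\cdot)$ given $\mathcal F_s$ is almost surely the law of a solution to the $q$-martingale problem with deterministic initial value $X(s)$; combined with the univariate uniqueness just obtained, this identifies $E(f(X(s+t))\mid\mathcal F_s)$ as a measurable function of $X(s)$ alone, so the transition kernels and, by induction on the number of time points, all finite-dimensional distributions are uniquely determined. The main obstacle here is mostly bookkeeping around these regular conditional probabilities; via Lemma \ref{l:paulneu2} our $q$-martingale problem embeds into the Ethier--Kurtz framework by identifying it with the martingale problem for the restricted operator $\mathcal B$, and the argument goes through.
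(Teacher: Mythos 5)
Your proposal is correct and follows essentially the same route as the paper: shift the processes in time (noting that $\widetilde X(t)=X(s+t)$ again solves the $q$-martingale problem with initial law $P^{X(s)}$), apply Lemma \ref{Proposition: 1d Eindeutigkeitssatz} on successive intervals of length $t_0$ to propagate equality of univariate marginals to all of $\mathbb R_+$, and conclude with \cite[Theorem 4.4.2]{ethier.kurtz.86}. The only cosmetic difference is that you iterate by direct induction over $[nt_0,(n+1)t_0]$ (which works since $I=[0,t_0]$ is closed), whereas the paper argues by contradiction with a maximal time $T$ and uses quasi-left continuity to handle the endpoint.
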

\begin{proof}
Let $X,Y$ be solutions with the same initial law and let  $T\in[0,\infty]$ be maximal such that 
$X(t),Y(t)$ have the same law for all $t\in[0,T)$.
By Proposition \ref{Satz: Semimartingaleigenschaft} $X$ and $Y$ are quasi-left continuous, which implies that
$X(T)$ and $Y(T)$ have the same law.
Assume by contradiction that $T\neq \infty$. 
Then $\widetilde X(t) := X(T+t)$, $\widetilde Y(t) := Y(T+t)$ are solutions to the $q$-martingale problem with the same initial law. Lemma \ref{Proposition: 1d Eindeutigkeitssatz} yields that $\widetilde X$, $\widetilde Y$ have the same one-dimensional distribution up to $t_0$ and hence $X$, $Y$ have the same univariate marginals up to $T+t_0$. This contradicts the maximality of $T$. Hence \cite[Theorem 4.4.2]{ethier.kurtz.86} yields the claim.
\end{proof}

\begin{proof}[Proof of Theorem \ref{Satz: regulaeres Symbol}]
 Theorem \ref{Satz: Existenz unter Stetigkeit} implies existence and Lemma \ref{Satz: Eindeutigkeitssatz} yields uniqueness.
 The second statement follows from Proposition \ref{p:feller}.
\end{proof}

We now turn to the proof of Theorem \ref{Satz: Eindeutigkeit unter Elliptizitaet neu}. 
\begin{lem}\label{Proposition: Abgeschlossenheit}
Let $q$ be a symbol. Consider $q_n:\mathbb R^d\times\mathbb R^d\rightarrow\mathbb C$, 
complex measures $P_{t,u,n}$  on $\mathbb R^d$, and $K_n\geq 0$ such that
\begin{enumerate}
 \item $\widehat{P}_{t,u,n}(x) = \exp(tq_n(x,u))$,
 \item $|P_{t,u,n}|(\mathbb R^d) \leq 1$,
 \item $\int v |P_{t,u,n}|(dv) = 0$,
 \item $\int |v|^2 |P_{t,u,n}|(dv) \leq tK_n(1+|u|^2)$,
 \item $q(x,u) = \sum_{n\in\mathbb N}q_n(x,u)$, and
 \item $K:=\sum_{n=1}^\infty K_n <\infty$
\end{enumerate}
for any $n\in\mathbb N$, $x,u\in\mathbb R^d$, $t\in[0,1]$. Then there are complex measures $P_{t,u}$ on $\mathbb R^d$ such that
\begin{enumerate}
 \item $\widehat{P}_{t,u}(x) = \exp(tq(x,u)),$
 \item $\int \frac{1+|u+v|^2}{1+|u|^2} |P_{t,u}|(dv) \leq 1+Kt$
\end{enumerate}
 for any $x,u\in\mathbb R^d$, $t\in[0,1]$. 
\end{lem}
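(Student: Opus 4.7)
The natural candidate for $P_{t,u}$ is the limit of the partial convolutions $P^{(N)} := P_{t,u,1} * P_{t,u,2} * \cdots * P_{t,u,N}$, since then $\widehat{P^{(N)}}(x) = \prod_{k=1}^N e^{tq_k(x,u)} = \exp(t\sum_{k=1}^N q_k(x,u))$ converges pointwise to $\exp(tq(x,u))$ by hypothesis~(5). My plan is therefore to obtain uniform estimates on the $P^{(N)}$, extract a weak-$*$ limit $P_{t,u}$, and then transfer the bound~(2) to the limit.

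All estimates will be carried through the auxiliary positive measures $\nu_N := |P_{t,u,1}| * \cdots * |P_{t,u,N}|$, which dominate $|P^{(N)}|$ via the elementary inequality $|\mu*\sigma| \leq |\mu|*|\sigma|$ for complex measures. A short induction on $N$ using the convolution identities for total mass, first moment, and second moment of positive measures, combined with hypotheses~(2)--(4) and~(6), will yield
\[
\nu_N(\mathbb R^d) \leq 1, \qquad \int v\,d\nu_N(v) = 0, \qquad \int |v|^2\,d\nu_N(v) \leq tK(1+|u|^2).
\]
The cancellation of the first moment at each step is what makes everything work: since $\int v\,d|P_{t,u,k}|=0$ and $|P_{t,u,k}|(\mathbb R^d)\leq 1$, the identity $\int v\,d(\mu*\sigma) = \sigma(\mathbb R^d)\int v\,d\mu + \mu(\mathbb R^d)\int v\,d\sigma$ propagates the vanishing first moment, and at the same time the cross term in the second-moment recursion drops out so that the $K_k$ simply add. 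Expanding $|u+v|^2 = |u|^2 + 2u\cdot v + |v|^2$ and using the vanishing first moment of $\nu_N$ then gives
\[
\int \frac{1+|u+v|^2}{1+|u|^2}\,d|P^{(N)}|(v) \leq \int \frac{1+|u+v|^2}{1+|u|^2}\,d\nu_N(v) \leq 1+tK.
\]

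For the limit, I will use that $\{P^{(N)}\}$ is bounded in the Banach space $M(\mathbb R^d) = C_0(\mathbb R^d)^*$ (by the total variation bound) and that the variations $\{|P^{(N)}|\}$ are uniformly tight (by the second-moment bound via Chebyshev). Banach--Alaoglu then extracts a weak-$*$ convergent subsequence with limit a complex measure $P_{t,u}$, and tightness of the variations upgrades this to convergence against all bounded continuous test functions, in particular against $v\mapsto e^{ivx}$ for each $x$. Combining with the pointwise convergence of $\widehat{P^{(N)}}$ identified above yields $\widehat{P_{t,u}}(x)=\exp(tq(x,u))$, which is property~(1); uniqueness of the Fourier transform then forces the entire sequence to converge to $P_{t,u}$. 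Property~(2) for $P_{t,u}$ will follow from the lower semicontinuity of $\mu\mapsto \int w\,d|\mu|$ under weak convergence, applied to the continuous non-negative weight $w(v) := (1+|u+v|^2)/(1+|u|^2)$; as $w$ is unbounded I will approximate by $w\wedge n$, invoke the standard lower-semicontinuity statement for bounded continuous weights, and pass $n\to\infty$ by monotone convergence.

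The main obstacle is keeping the moment bookkeeping sharp enough to produce exactly $1+tK$ in property~(2) rather than a weaker bound. The vanishing first-moment hypothesis~(3) is precisely what makes the cross term $(2u\cdot v)/(1+|u|^2)$ cancel exactly; estimating it by Cauchy--Schwarz would instead introduce an unwanted $2\sqrt{tK}$ term and destroy the linearity in $t$. A secondary (standard) point that needs care is verifying that tightness of the variation measures together with weak-$*$ convergence against $C_0$ is enough to pass to the pointwise limit of the Fourier transforms.
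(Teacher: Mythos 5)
Your proof is correct, and its computational core --- forming the partial convolutions $P^{(N)}=P_{t,u,1}*\dots*P_{t,u,N}$, dominating $|P^{(N)}|$ by $\nu_N=|P_{t,u,1}|*\dots*|P_{t,u,N}|$, and using the vanishing first moments from hypothesis~(3) to cancel the cross terms so that the second moments of $\nu_N$ simply add up to at most $tK(1+|u|^2)$ --- is exactly the paper's bookkeeping. Where you genuinely differ is in how the limit object is produced. The paper constructs the infinite convolution explicitly through its appendix machinery: it first checks that $\prod_n|P_{t,u,n}|(\mathbb R^d)$ converges to a strictly positive limit (via $|\prod_nP_{t,u,n}(\mathbb R^d)|=e^{t\Re q(0,u)}>0$), builds the infinite product measure of the $(P_{t,u,n})_n$ (Proposition \ref{p:unend product compl}), realizes $P_{t,u}$ as the image of the $L^2$-convergent sum $\sum_n\pi_n$ of coordinate maps (Proposition \ref{p:infinite convolution}), and obtains $|P_{t,u}|\le Q_{t,u}:=*_{n}|P_{t,u,n}|$ together with the moments of $Q_{t,u}$ by direct computation on the product space. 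You instead extract $P_{t,u}$ by Banach--Alaoglu from the norm-bounded sequence $(P^{(N)})_N$ in $C_0(\mathbb R^d)^*$, use the uniform second-moment bound to get tightness of the variations and hence convergence against $v\mapsto e^{ivx}$, and transfer the weighted-variation bound by lower semicontinuity of $\mu\mapsto\int w\,d|\mu|$ after truncating $w$. Both routes are sound; yours avoids the constructive infinite-product apparatus and does not need the positivity of $\prod_n|P_{t,u,n}|(\mathbb R^d)$, at the modest cost of having to justify lower semicontinuity of the weighted variation along a weak-$*$ convergent sequence of \emph{complex} measures --- which does hold, via the dual representation $\int w\,d|\mu|=\sup\{|\int h\,d\mu|:h\in C_c(\mathbb R^d),\ |h|\le w\}$, and is the one step you should write out rather than cite as standard.
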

\begin{proof}
  Let $t\in[0,1]$, $u\in\mathbb R^d$. Observe that $(\otimes_{n=1}^l|P_{t,u,n}|((\mathbb R^d)^l))_{l\in\mathbb N}$ is a decreasing sequence and denote its limit by $c\in[0,1]$. Note that
   \begin{align*}
     c &= \lim_{l\rightarrow\infty}\prod_{n=1}^l|P_{t,u,n}|(\mathbb R^d) \\
       &\geq \bigg|\lim_{l\rightarrow\infty}\prod_{n=1}^lP_{t,u,n}(\mathbb R^d)\bigg| \\
       &= \bigg|\lim_{l\rightarrow\infty} \exp\Big(t\sum_{n=1}^lq_n(x,u)\Big)\bigg| \\
       &= \exp(t \Re(q(x,u))) \\
       &>0.
\end{align*}    
For $a_n:=P_{t,u,n}(\mathbb R^d) = \exp(tq_n(0,u))$, assumption (5) yields $\Pi_{n=1}^\infty a_n = \exp(tq(0,u))\in\mathbb C$.

By (4), (6) and Proposition \ref{p:infinite convolution} the infinite convolutions $P_{t,u}$ of $(P_{t,u,n})_{n\in\mathbb N}$ and
$Q_{t,u}$ of $(|P_{t,u,n}|)_{n\in\mathbb N}$ exist and we have 
$|P_{t,u}|\leq Q_{t,u}$
in the sense that the density is bounded by one. Moreover,
\begin{align*}
  \int e^{ivx} P_{t,u}(dv) &= \lim_{l\rightarrow\infty} \int e^{ivx} \left(P_{t,u,1}*\dots*P_{t,u,l}\right)(dv) \\
    &= \lim_{l\rightarrow\infty} \exp\left(t\sum_{n=1}^lq_n(x,u)\right) \\
    &= \exp(tq(x,u))
\end{align*}
for any $x\in\mathbb R^d$. Hence, $P_{t,u}$ satisfies (1).

By Proposition \ref{p:unend product compl} the infinite product measure $\overline P_{t,u}$ of $(P_{t,u,n})_{n\in\mathbb N}$
exists.
Let $\pi_n:(\mathbb R^d)^{\mathbb N}\rightarrow\mathbb R^d,(x_l)_{l\in\mathbb N}\mapsto x_n$. Then we have
 \begin{align*}
  \left|\int |\pi_n|^2 d\overline P_{t,u}\right| &\leq \int |\pi_n|^2 d|\overline P_{t,u}| \\
  &= \frac{c}{|P_{t,u,n}|(\mathbb R^d)}\int v^2 |P_{t,u,n}|(dv) \\
  &\leq tK_n(1+|u|^2) 
\end{align*}  
and
 \begin{align*}
\int \pi_n\pi_m d|\overline P_{t,u}| &= \frac{c}{|P_{t,u,n}|(\mathbb R^d)|P_{t,u,m}|(\mathbb R^d)} \int v |P_{t,u,n}|(dv) \int w |P_{t,u,m}|(dw) \\
    &= 0,
\end{align*}  
 where we used (3) and (4).
We conclude that
   \begin{align*}
     \int \bigg|\sum_{n=1}^l\pi_n\bigg|^2 d|\overline P_{t,u}| &= \int \sum_{n=1}^l|\pi_n|^2 d|\overline P_{t,u}| \\
      &\leq \sum_{n=1}^l tK_n(1+|u|^2) \\
      &\leq tK(1+|u|^2).
   \end{align*}
 This implies $$\int v^2 Q_{t,u}(dv) = \lim_{l\rightarrow\infty} \int \bigg(\sum_{n=1}^l\pi_n\bigg)^2 d|\overline P_{t,u}| \leq tK(1+|u|^2). $$
Similar arguments show that $Q_{t,u}(\mathbb R^d)\leq 1$ and $\int v Q_{t,u}(dv) = 0$.    
Thus we have
 \begin{align*}
   \int \frac{1+|u+v|^2}{1+|u|^2} |P_{t,u}|(dv) &\leq \int \frac{1+|u+v|^2}{1+|u|^2} Q_{t,u}(dv) \\
    &\leq 1 + tK
 \end{align*}
 as desired.
\end{proof}
 
 Recall from Theorem \ref{Satz: regulaeres Symbol} 
 that uniqueness holds for symbol $q$ in Lemma \ref{Proposition: Abgeschlossenheit}
 if it satisfies the requirements of Theorem \ref{Satz: Existenz unter Stetigkeit}.
 The functions $q_n$ will later be chosen from the following lemma.
\begin{lem}\label{Lemma: Existenz eines Masses}
 Let $n\in\mathbb R^d$, $a,b\in\mathbb C$, $k\in\mathbb R$ such that $\Re(a)\geq\vert b\vert$. Then there is a complex measure $P_t$ for any $t\in[0,1]$ such that
\begin{enumerate}
 \item $\widehat{P_{t}}(x) = \exp(t(b\cos(knx)-a))$,
 \item $\vert P_{t}\vert(\mathbb R^d) \leq 1$,
 \item $\int_{}v\vert P_{t}\vert(dv) =0$,
 \item $\int_{}\vert v\vert^2\vert P_{t}\vert(dv) \leq t\vert b\vert \vert kn\vert^2$.
\end{enumerate}
Moreover, there is a complex measure $Q_t$ such that
\begin{enumerate}
 \item $\widehat{Q_{t}}(x) = \exp(t(b\sin(knx)-a))$,
 \item $\vert Q_{t}\vert(\mathbb R^d) \leq 1$,
 \item $\int_{} v\vert Q_{t}\vert(dv) = 0$,
 \item $\int_{}\vert v\vert^2\vert Q_{t}\vert(dv) \leq t\vert b\vert \vert kn\vert^2.$
\end{enumerate}
\end{lem}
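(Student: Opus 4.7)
The plan is to exhibit $P_t$ and $Q_t$ explicitly as convolution power series of Dirac combinations supported at $\pm kn$. I set
$$\mu := \tfrac{1}{2}(\delta_{kn}+\delta_{-kn}), \qquad \nu := \tfrac{1}{2i}(\delta_{kn}-\delta_{-kn}),$$
so that $\hat\mu(x)=\cos(knx)$, $\hat\nu(x)=\sin(knx)$, and $|\mu|=\mu$, $|\nu|=\mu$. Then I would define
$$P_t := e^{-ta}\sum_{\ell=0}^\infty \frac{(tb)^\ell}{\ell!}\mu^{*\ell}, \qquad Q_t := e^{-ta}\sum_{\ell=0}^\infty \frac{(tb)^\ell}{\ell!}\nu^{*\ell},$$
each absolutely convergent in total-variation norm since $\sum_\ell (t|b|)^\ell/\ell!<\infty$. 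Property (1) for both measures is immediate from $\widehat{\mu^{*\ell}}=\hat\mu^\ell$ and $\widehat{\nu^{*\ell}}=\hat\nu^\ell$.

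For (2) and (4) I would use the elementary inequality $|\nu^{*\ell}|\le|\nu|^{*\ell}=\mu^{*\ell}$, which reduces everything to moment computations for the probability measure $\mu$. Since $\mu$ is symmetric with $\int v\,\mu(dv)=0$ and $\int|v|^2\mu(dv)=|kn|^2$, one has $\int|v|^2\mu^{*\ell}(dv)=\ell|kn|^2$. Combined with $|P_t|,|Q_t|\le e^{-t\Re a}\sum_\ell(t|b|)^\ell/\ell!\cdot\mu^{*\ell}$, this yields $|P_t|(\mathbb R^d),|Q_t|(\mathbb R^d)\le e^{t|b|-t\Re a}\le 1$ (using $\Re a\ge|b|$) and
$$\int|v|^2|P_t|(dv),\ \int|v|^2|Q_t|(dv) \le e^{-t\Re a}\sum_{\ell\ge 1}\frac{(t|b|)^\ell}{\ell!}\ell|kn|^2 = t|b||kn|^2\,e^{t|b|-t\Re a}\le t|b||kn|^2.$$

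The step I expect to be the main obstacle is (3), the vanishing of the first moment of $|P_t|$ and $|Q_t|$. For $P_t$ it will be easy: the reflection $v\mapsto -v$ fixes $\mu$ and hence each $\mu^{*\ell}$, so $P_t$ itself is reflection-invariant, which forces $|P_t|$ to be symmetric and hence centred. For $Q_t$ this argument fails because $\nu$ is antisymmetric, so $Q_t$ is not reflection-invariant. To handle this, I would factor
$$\sum_{\ell\ge 0}\frac{(tb)^\ell}{\ell!}\nu^{*\ell} = \bigg(\sum_{j\ge 0}\frac{(-itb/2)^j}{j!}\delta_{j\cdot kn}\bigg)*\bigg(\sum_{m\ge 0}\frac{(itb/2)^m}{m!}\delta_{-m\cdot kn}\bigg)$$
and collect the coefficient of $\delta_{p\cdot kn}$ in the double series. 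Using $(-i)(i)=1$, it equals $(-itb/2)^p S_{|p|}$ for $p\ge 0$ and $(itb/2)^{|p|}S_{|p|}$ for $p<0$, where $S_q := \sum_{m\ge 0}(tb/2)^{2m}/((q+m)!\,m!)$. Since $|{-itb/2}|=|itb/2|=t|b|/2$ and $S_{|p|}$ depends on $p$ only through $|p|$, the mass $|Q_t|$ places at $p\cdot kn$ equals the mass it places at $-p\cdot kn$. Thus $|Q_t|$ is symmetric on $kn\mathbb Z$ and $\int v|Q_t|(dv)=0$, completing the argument.
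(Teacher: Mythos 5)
Your construction of $P_t$ and $Q_t$ as the convolution-exponential series $e^{-ta}\exp(tb\mu)$ resp.\ $e^{-ta}\exp(tb\nu)$ is exactly the paper's, and your treatment of properties (1), (2), (4) and of (3) for $P_t$ (reflection-invariance of $\mu^{*\ell}$, hence of $|P_t|$) coincides with the paper's argument via Remark \ref{b:Momente} and Lemma \ref{l:symmetrie der variation}. The one place where you genuinely diverge is the symmetry of $|Q_t|$, which is indeed the crux. The paper handles it abstractly (Lemma \ref{l:spezielle Symmetrie}): it splits $\exp(tb\nu)=\cosh(tb\nu)+\sinh(tb\nu)$ via analytic functional calculus, notes that the two summands are supported on the even resp.\ odd multiples of $kn$ and are therefore orthogonal, so that $|\exp(tb\nu)|=|\cosh(tb\nu)|+|\sinh(tb\nu)|$, and then uses that $\cosh$ of an anti-symmetric measure is symmetric while $\sinh$ of one is anti-symmetric (and the total variation of an anti-symmetric measure is symmetric). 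You instead factor $\exp(tb\nu)$ into two one-sided Dirac exponentials, read off the atom $c_p$ at $p\cdot kn$, and check $|c_p|=|c_{-p}|$ directly from $(-i)(i)=1$; since the atoms sit at distinct points (the case $kn=0$ being trivial), $|Q_t|=\sum_p|c_p|\delta_{p\cdot kn}$ is symmetric. Your computation is correct and is in effect a hands-on version of the same even/odd splitting -- the $\cosh$ part collects the even $p+$odd cross terms and vice versa -- but it is more elementary and self-contained, avoiding the resolvent-integral machinery of the appendix, at the cost of being tied to the specific two-point measure rather than yielding a reusable symmetry lemma.
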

\begin{proof}
 Let $\mu$ be a complex measure on $\mathbb R^d$ with total variation less or equal $1$. 
 Moreover, let $P_t:=\exp(-ta)\exp(tb\mu)$, cf.\ Appendix \ref{s:appendix}. Then we have for all $x\in\mathbb R^d$
\begin{align}
\widehat P_t(x) =&\ \exp(-ta)\exp(tb\hat\mu(x)),\notag\\
\vert P_t\vert \leq&\ \exp(-t\Re(a))\exp(t\vert b\vert\vert\mu\vert),\label{second} \\
\vert P_{t}\vert(\mathbb R^d) \leq&\ \exp(t(\vert b\vert-\Re(a)))\leq 1,\notag\\
\int_{}\vert v\vert^2\vert P_{t}\vert(dv) \leq&\ t\vert b\vert \left(\int\vert v\vert^2\vert\mu\vert(dv)
+2\bigg|\int v|\mu|(dv) \bigg|^2\right)\notag
\end{align}
for any $x\in\mathbb R^d$, where \eqref{second} means that the measure on the left is absolutely continuous with density at most one relative to the measure on the right.  The last inequality follows from Remark \ref{b:Momente}. For the specific choice $\mu=\frac{1}{2}(\delta_{nk}+\delta_{-nk})$,  Lemma \ref{l:symmetrie der variation} yields
$$\int_{}v\vert P_{t}\vert(dv)=0,$$
which shows the first assertion. For the specific choice $\mu=\frac{1}{2i}(\delta_{nk}-\delta_{-nk})$, Lemma \ref{l:spezielle Symmetrie} implies
 $$\int_{}v\vert P_{t}\vert(dv) =0$$
 and hence  the second claim.
\end{proof}

\begin{lem}\label{Lemma: fHoelder stetig}
Let $q\in C^{(1,0)}(\mathbb R^d\times\mathbb R^d,\mathbb C)$ be a symbol such that $(x,u)\mapsto \frac{q(x,u)}{1+|u|^2}$ and $(x,u)\mapsto \frac{\nabla_1q(x,u)}{1+|u|^2}$ are bounded.
Then $q$ is ($f$-)H\"older continuous for
 $$f(x) := \min\left\{2\sup_{y\in\mathbb R^d}\frac{|q(y,u)|}{1+|u|^2},|x|\sup_{y\in\mathbb R^d}\frac{|\nabla_1q(y,u)|}{1+|u|^2}\right\},\quad x\in\mathbb R^d.$$
In particular, if $q$ satisfies the requirements of Theorem \ref{Satz: Eindeutigkeit unter Elliptizitaet neu} for some $\phi$, then $q$ is H\"older-continuous.
\end{lem}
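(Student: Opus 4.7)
The plan is to set $C_1:=\sup_{y,u\in\mathbb R^d}\frac{|q(y,u)|}{1+|u|^2}$ and $C_2:=\sup_{y,u\in\mathbb R^d}\frac{|\nabla_1 q(y,u)|}{1+|u|^2}$, which are finite by the hypotheses, and to verify that the candidate
$$f(x)=\min\{2C_1,\,|x|C_2\},\quad x\in\mathbb R^d,$$
is continuous, bounded and satisfies the defining inequality for Hölder continuity from Definition \ref{Definition: Symbol}. Continuity and boundedness of $f$ are immediate since $f$ is the pointwise minimum of the constant function $2C_1$ and the continuous function $x\mapsto|x|C_2$, and it is bounded above by $2C_1$.

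For the estimate itself I would argue in two ways and take the better of the two bounds. First, by the triangle inequality,
$$|q(x,u)-q(y,u)|\le|q(x,u)|+|q(y,u)|\le 2C_1(1+|u|^2).$$
Secondly, using that $q(\cdot,u)\in C^1(\mathbb R^d,\mathbb C)$, the fundamental theorem of calculus applied coordinatewise along the segment joining $y$ and $x$ gives
$$q(x,u)-q(y,u)=\int_0^1\nabla_1q\bigl(y+t(x-y),u\bigr)\,(x-y)\,dt,$$
so that
$$|q(x,u)-q(y,u)|\le|x-y|\,C_2\,(1+|u|^2).$$
Combining the two bounds yields $|q(x,u)-q(y,u)|\le f(x-y)(1+|u|^2)$ for all $x,y,u\in\mathbb R^d$, which is exactly $f$-Hölder continuity.

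For the second assertion it suffices to verify the boundedness hypotheses on $q/(1+|u|^2)$ and $\nabla_1q/(1+|u|^2)$ under the assumptions of Theorem \ref{Satz: Eindeutigkeit unter Elliptizitaet neu}. The growth bound \eqref{e:wachstum} applied with $|\alpha|=0$ and $|\alpha|=1$ gives $|q(x,u)|\le g_2(x)|\phi(u)|$ and $|\nabla_1q(x,u)|\le\sqrt d\,g_2(x)|\phi(u)|$, with $g_2$ bounded. Thus the only remaining point is that $|\phi(u)|\le K(1+|u|^2)$ for some constant $K$. This is exactly the standard Taylor estimate for a L\'evy exponent that was recorded in the remark preceding Definition \ref{Definition: Generator} (applied to the fixed exponent $\phi$, not to a family), and follows from its L\'evy-Khintchine representation.

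The only mildly non-routine step is the last estimate on $\phi$; everything else is triangle inequality plus a one-line application of the fundamental theorem of calculus. Once the boundedness of $\phi/(1+|u|^2)$ is noted, the ``in particular'' clause reduces immediately to the first part of the lemma.
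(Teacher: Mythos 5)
Your proposal is correct and follows essentially the same route as the paper: the two-sided estimate (triangle inequality for the bounded part, mean value theorem / fundamental theorem of calculus for the Lipschitz part) for the first assertion, and verification of the boundedness hypotheses via \eqref{e:wachstum} together with the quadratic growth bound $|\phi(u)|\leq K(1+|u|^2)$ for the characteristic exponent $\phi$ for the ``in particular'' clause. Your write-up is in fact more detailed than the paper's one-line argument, and correctly reads the suprema in the definition of $f$ as being taken over both $y$ and $u$.
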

\begin{proof}
  The first part follows from the mean value theorem.
  
  Now assume that $q$ satisfies the requirements of Theorem \ref{Satz: Eindeutigkeit unter Elliptizitaet neu}. Equation \eqref{e:wachstum} yields that $|q(x,u)| \leq g_2(x)\phi(u)$ for some continuous function $g_2:\mathbb  R^d\rightarrow \mathbb R_+$ which is bounded by some constant $C_1<\infty$. Since $|\phi(u)|\leq C_2(1+|u|^2)$ for some constant $C_2<\infty$, we get $|q(x,u)|\leq C_1C_2(1+|u|^2)$. Moreover, we have $|\partial_{x_j}q(x,u)| \leq g_2(x)\phi(u)$ and hence $(x,u)\mapsto \frac{\nabla_1q(x,u)}{1+|u|^2}$ is bounded by $C(1+|u|)^2$ where $C:=dC_1C_2$.
\end{proof}

\begin{lem}[Fourier conditions]\label{Satz: Eindeutigkeit unter Fourier Bedingung}
 Let $q\in C^{(1,0)}(\mathbb R^d\times\mathbb R^d,\mathbb C)$ be a symbol with the following properties.
 \begin{enumerate}
  \item  
$q$ satisfies the requirements of Theorem \ref{Satz: Existenz unter Stetigkeit}.
\item There is a constant $c>0$ such that $|q(x,u)| + \vert \nabla_1q(x,u)\vert\leq c(1+\vert u\vert^2)$ for all $x,u\in\mathbb R^d$.
\item It has Fourier series representation, i.e.\ there are $a_n(u),b_n(u)\in\mathbb C$ for all $n\in\mathbb Z^d$ and a constant $k>0$ such that
\begin{equation}\label{e:sincos}
 q(x,u) = \sum_{n\in\mathbb Z^d} \left(a_n(u)\cos(knx)+b_n(u)\sin(knx)\right)
\end{equation}
    and the family $(a_n,b_n)_{n\in\mathbb Z^d}$ satisfies:
 \begin{enumerate}
   \item the real part of $-a_0(u)$ dominates the absolute sum of the other coefficients, i.e.\ 
\begin{equation}\label{e:fc1}
  -\Re(a_0(u))\geq \sum_{n\in\mathbb Z^d\backslash\{0\}}\left(\vert a_n(u)\vert+\vert b_n(u)\vert\right)
\end{equation}
 for all $u\in\mathbb R^d$ and
 \item 
 \begin{equation}\label{e:fc2}
  K:=|k|^2\sup\left\{\sum_{n\in\mathbb Z^d}\vert n\vert^2\left(\frac{\vert a_n(u)\vert}{1+\vert u\vert^2}+\frac{\vert b_n(u)\vert}{1+\vert u\vert^2}\right):u\in\mathbb R^d\right\}<\infty.
 \end{equation}
\end{enumerate}
 \end{enumerate}
Then existence and uniqueness holds for the $q$-martingale problem.
\end{lem}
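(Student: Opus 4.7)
The strategy is to invoke Theorem \ref{Satz: regulaeres Symbol}, for which we must construct complex measures $P_{t,u}$ satisfying $\widehat{P_{t,u}}(x) = e^{tq(x,u)}$ together with the moment bound (2) of that theorem. We build these measures atomically: each Fourier mode produces an elementary measure via Lemma \ref{Lemma: Existenz eines Masses}, and the atoms are assembled into $P_{t,u}$ via Lemma \ref{Proposition: Abgeschlossenheit}. Existence for the $q$-martingale problem is free from hypothesis (1) and Theorem \ref{Satz: Existenz unter Stetigkeit}, while Hölder continuity of $q$ follows from hypothesis (2) together with Lemma \ref{Lemma: fHoelder stetig}.

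First I would decompose the symbol. Set $\alpha(u) := \sum_{n\in\mathbb Z^d\setminus\{0\}}(|a_n(u)|+|b_n(u)|)$, which is finite by (\ref{e:fc1}), and write
\begin{align*}
q_0(u) &:= a_0(u)+\alpha(u),\\
q_n^c(x,u) &:= a_n(u)\cos(knx)-|a_n(u)|,\quad n\neq 0,\\
q_n^s(x,u) &:= b_n(u)\sin(knx)-|b_n(u)|,\quad n\neq 0.
\end{align*}
By (\ref{e:sincos}) we have $q = q_0 + \sum_{n\neq 0}(q_n^c+q_n^s)$ with absolute convergence in $x$, and (\ref{e:fc1}) gives $\Re(q_0(u))\leq 0$.

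Next I would produce the atomic measures. For the constant piece take $P_{t,u,0}:=\exp(tq_0(u))\delta_0$, which has total variation $\exp(t\Re(q_0(u)))\leq 1$ and vanishing moments of all orders. For each $n\neq 0$, Lemma \ref{Lemma: Existenz eines Masses} applies with $(a,b)=(|a_n(u)|,a_n(u))$ to the cosine term and $(a,b)=(|b_n(u)|,b_n(u))$ to the sine term (the hypothesis $\Re(a)\geq|b|$ holds with equality), yielding complex measures $P_{t,u,n}^c$, $P_{t,u,n}^s$ whose Fourier transforms are $\exp(tq_n^c(x,u))$ resp.\ $\exp(tq_n^s(x,u))$, with total variation at most one, vanishing first moments, and second moments bounded by $t|kn|^2|a_n(u)|$ resp.\ $t|kn|^2|b_n(u)|$.

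Finally I would re-enumerate the countable family $\{P_{t,u,0}\}\cup\{P_{t,u,n}^c,P_{t,u,n}^s:n\neq 0\}$ as a sequence and apply Lemma \ref{Proposition: Abgeschlossenheit}. The main technical point is that the natural bounds are $u$-dependent: for the cosine/sine atoms one has
\[
\int|v|^2|P_{t,u,n}^{c/s}|(dv)\leq tK_n^{c/s}(u)(1+|u|^2),\qquad K_n^{c/s}(u):=\frac{|kn|^2|a_n(u)|\text{ resp.\ }|b_n(u)|}{1+|u|^2},
\]
and the summed bound $\sum_n(K_n^c(u)+K_n^s(u))$ is uniformly dominated by $K$ of (\ref{e:fc2}). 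Inspecting the proof of Lemma \ref{Proposition: Abgeschlossenheit}, one sees that the $K_n$'s are used solely to majorize $\int|\sum_{n=1}^l\pi_n|^2\,d|\overline P_{t,u}|$, and the argument goes through verbatim with $u$-dependent $K_n(u)$ provided $\sum_nK_n(u)\leq K$ for every $u$. This produces the desired $P_{t,u}$ satisfying the two conditions of Theorem \ref{Satz: regulaeres Symbol}, which then yields both existence and uniqueness.

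The main obstacle is precisely this mild refinement of Lemma \ref{Proposition: Abgeschlossenheit}: a naive application forces one to bound $\sum_n\sup_uK_n(u)$, which may diverge even when (\ref{e:fc2}) holds. Apart from that, the proof is a bookkeeping exercise in combining the atomic measures from Lemma \ref{Lemma: Existenz eines Masses}.
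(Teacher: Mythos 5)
Your proof is correct and follows essentially the same route as the paper's: the same recentered decomposition (your $q_0(u)=a_0(u)+\alpha(u)$ is the paper's $\tilde a_0(u)$, and the shifted cosine/sine modes $a_n(u)\cos(knx)-|a_n(u)|$, $b_n(u)\sin(knx)-|b_n(u)|$ are identical), the same atomic measures from Lemma \ref{Lemma: Existenz eines Masses}, assembled via Lemma \ref{Proposition: Abgeschlossenheit} and fed into Lemma \ref{Proposition: 1d Eindeutigkeitssatz} (equivalently, Theorem \ref{Satz: regulaeres Symbol}). Your observation that the constants $K_n$ must be allowed to depend on $u$ --- since \eqref{e:fc2} only controls $\sup_u\sum_nK_n(u)$ and not $\sum_n\sup_uK_n(u)$ --- is a valid and worthwhile refinement that the paper's own application of Lemma \ref{Proposition: Abgeschlossenheit} silently relies on; as you note, the proof of that lemma fixes $t$ and $u$ at the outset, so it goes through unchanged.
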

\begin{proof}
Lemma  \ref{Lemma: fHoelder stetig} states that $q$ is H\"older continuous.

Since the coefficient $b_0$ does not play any role in the representation of $q$ we may assume $b_0=0$. Let $u\in\mathbb R^d,t\in[0,1]$. Then the Fourier series can be rewritten as
$$q(x,u) = \tilde a_0(u) + \sum_{n\in\mathbb Z^d\backslash\{0\}}\Big(\left(a_n(u)\cos(knx)-\vert a_n(u)\vert)+(b_n(u)\sin(knx)-\vert b_n(u)\vert\right)\Big),$$
where $\tilde a_0(u):= a_0(u)+\sum_{n\in\mathbb Z^d\backslash\{0\}}(\vert a_n(u)\vert+\vert b_n(u)\vert)$ and $\Re(\tilde a_0(u))\leq 0$. By Lemma \ref{Lemma: Existenz eines Masses} there are complex measures $P_{t,u,n},Q_{t,u,n}$, such that
\begin{enumerate}
 \item $\widehat{P_{t,u,n}}(x) = \exp(t(a_n(u)\cos(knx)-\vert a_n(u)\vert))$,
 \item $\vert P_{t,u,n}\vert(\mathbb R^d) \leq 1$,
 \item $\int_{} v\vert P_{t,u,n}\vert(dv) = 0$,
 \item $\int_{}\vert v\vert^2\vert P_{t,u,n}\vert(dv) \leq t\vert a_n(u)\vert \vert kn\vert^2$
\end{enumerate}
and
\begin{enumerate}
 \item $\widehat{Q_{t,u,n}}(x) = \exp(t(b_n(u)\sin(knx)-\vert b_n(u)\vert))$,
 \item $\vert Q_{t,u,n}\vert(\mathbb R^d) \leq 1$,
 \item $\int_{} v\vert Q_{t,u,n}\vert(dv) = 0$,
 \item $\int_{}\vert v\vert^2\vert Q_{t,u,n}\vert(dv) \leq t\vert b_n(u)\vert \vert kn\vert^2$
\end{enumerate}
for all $n\in\mathbb Z^d\setminus\{0\}$. 
Moreover, 
the measure $P_{t,u,0}:=\exp(t\tilde a_0(u))\delta_0$ satisfies
\begin{enumerate}
 \item $\widehat{P_{t,u,0}}(x) = \exp(t\tilde a_0(u))$,
 \item $\vert P_{t,u,0}\vert(\mathbb R^d)= \exp(t\Re(\tilde a_0(u)))\leq 1$,
 \item $\int_{} v\vert P_{t,u,0}\vert(dv) = 0$,
 \item $\int_{}\vert v\vert^2\vert P_{t,u,0}\vert(dv) = 0.$
\end{enumerate}
Lemmas \ref{Proposition: Abgeschlossenheit} and \ref{Proposition: 1d Eindeutigkeitssatz} yield uniqueness of the solution to the 
$q$-martingale problem.
\end{proof}

The Fourier conditions in Lemma \ref{Satz: Eindeutigkeit unter Fourier Bedingung} might seem hard to verify. However, ellipticity and Fourier ellipticity are almost equivalent as can be seen from the proof of the next lemma.
\begin{lem}\label{Satz: Eindeutigkeit unter Elliptizitaet}
 Let $q$ be a continuous symbol such that $q(\cdot,u)\in C^{[d/2]+3}(\mathbb R^d,\mathbb C)$ for all $u\in\mathbb R^d$ and such that
\begin{enumerate}
 \item $q$ satisfies the requirements of the existence theorem \ref{Satz: Existenz unter Stetigkeit} and
 \item for every $x_0$ there is a neighbourhood $V$ of $x_0$ and $L<\infty$ such that $\psi:=q(x_0,\cdot)$ satisfies
\begin{eqnarray}
\vert \partial_x^\beta q(x,u)\vert &\leq& L\vert\Re(\psi(u))\vert,\label{e:20a}\\
\vert \partial_x^\alpha q(x,u)\vert &\leq& L (1+\vert u\vert^2)\label{e:20b}
\end{eqnarray}
for all $x\in V,u\in\mathbb R^d,\beta,\alpha\in\mathbb N^d$ with $\vert\beta\vert\leq [d/2]+1$ 
and $\vert\alpha\vert\leq [d/2]+3$.\end{enumerate}
Then existence and uniqueness hold for the $q$-martingale problem.
\end{lem}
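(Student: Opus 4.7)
The strategy is to reduce the claim via localisation (Proposition \ref{S:Lokalisation}) to the Fourier-series criterion Lemma \ref{Satz: Eindeutigkeit unter Fourier Bedingung}. Fix $x_0\in\mathbb R^d$, let $V$, $L$, and $\psi:=q(x_0,\cdot)$ be as in hypothesis (2), and pick a smooth cut-off $\phi\colon\mathbb R^d\to[0,1]$ supported in $B(0,1)$ with $\phi\equiv1$ on $B(0,1/2)$. For $\epsilon>0$ (to be chosen small, with $B(x_0,\epsilon)\subseteq V$) set $\phi_\epsilon(y):=\phi(y/\epsilon)$ and
$$q_V(x,u):=\phi_\epsilon(x-x_0)\,q(x,u)+\bigl(1-\phi_\epsilon(x-x_0)\bigr)\psi(u).$$
Because L\'evy exponents form a convex set, $q_V(x,\cdot)$ is again a L\'evy exponent for each $x$. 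Moreover $q_V=q$ on $V':=B(x_0,\epsilon/2)$ and $q_V(\cdot,u)\equiv\psi(u)$ off $B(x_0,\epsilon)$, so $q_V$ extends $P$-periodically in $x$ with period $P:=3\epsilon$. The growth estimate \eqref{e:20b} with $|\alpha|=0$ transfers to $q_V$, which therefore satisfies the assumptions of Theorem \ref{Satz: Existenz unter Stetigkeit} and is H\"older continuous by Lemma \ref{Lemma: fHoelder stetig}.

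Expand $q_V(\cdot,u)$ as a Fourier series on the torus of side $P$,
$$q_V(x,u)=\sum_{n\in\mathbb Z^d}\bigl(a_n(u)\cos(kn\!\cdot\!x)+b_n(u)\sin(kn\!\cdot\!x)\bigr),\qquad k:=\tfrac{2\pi}{P},$$
and write $r(x,u):=\phi_\epsilon(x-x_0)\bigl(q(x,u)-\psi(u)\bigr)$. The Leibniz rule, combined with the bound \eqref{e:20a} on $[d/2]+1$ $x$-derivatives of $q$ and the mean-value estimate $|q(x,u)-\psi(u)|\leq L\epsilon|\Re\psi(u)|$ on the support of $\phi_\epsilon(\cdot-x_0)$, yields
$$|\partial_x^\beta r(x,u)|\leq C_\beta L\,|\Re\psi(u)|\,\epsilon^{1-|\beta|},\qquad |\beta|\leq[d/2]+1;$$
the crucial point is that the factor $\epsilon$ coming from $|q-\psi|$ compensates the loss $\epsilon^{-|\beta|}$ arising from differentiating $\phi_\epsilon$. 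Parseval's identity, using $|\mathrm{supp}(\phi_\epsilon)|\lesssim\epsilon^d$ and $P=3\epsilon$, yields $\sum_n|kn|^{2([d/2]+1)}(|a_n|^2+|b_n|^2)\leq C L^2|\Re\psi(u)|^2\epsilon^{2-2([d/2]+1)}$. Paired with Cauchy--Schwarz against the convergent series $\sum_{n\neq0}|kn|^{-2([d/2]+1)}=(P/2\pi)^{2([d/2]+1)}\sum_{n\neq0}|n|^{-2([d/2]+1)}$ (which converges because $2([d/2]+1)>d$), this gives
$$\sum_{n\neq0}\bigl(|a_n(u)|+|b_n(u)|\bigr)\leq C_1 L\epsilon\,|\Re\psi(u)|.$$
On the other hand $|a_0(u)-\psi(u)|\leq C_2 L\epsilon|\Re\psi(u)|$, hence $-\Re a_0(u)\geq(1-C_2 L\epsilon)|\Re\psi(u)|$. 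Choosing $\epsilon$ so small that $(C_1+C_2)L\epsilon<1$ secures the ellipticity condition \eqref{e:fc1}. An entirely parallel argument based on the higher bound \eqref{e:20b} with $[d/2]+3$ derivatives, coupled with $\sum_{n\neq0}|n|^{4-2([d/2]+3)}<\infty$ (valid because $[d/2]+3>d/2+2$), produces $\sum_n|n|^2(|a_n(u)|+|b_n(u)|)\leq C_3 L(1+|u|^2)\epsilon$, so that the summability constant $K\lesssim L/\epsilon$ in \eqref{e:fc2} is finite.

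Lemma \ref{Satz: Eindeutigkeit unter Fourier Bedingung} now supplies existence and uniqueness for the $q_V$-martingale problem, and the balls $V'=B(x_0,\epsilon/2)$ form an open cover of $\mathbb R^d$ as $x_0$ varies. Since each $q_V$ agrees with $q$ on the corresponding $V'$, Proposition \ref{S:Lokalisation} transfers the conclusion to the original symbol $q$. The main obstacle is the strict quantitative ellipticity \eqref{e:fc1}: it is precisely the coupling $P\asymp\epsilon$ together with the hypothesis \eqref{e:20a} on $[d/2]+1$ derivatives that allows the $\epsilon$-factors on the two sides of the Cauchy--Schwarz bound to cancel, thereby forcing the ellipticity constant $C_1 L\epsilon$ to be uniformly small for small enough $\epsilon$.
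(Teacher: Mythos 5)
Your proposal is correct and follows essentially the same route as the paper's proof: freeze the symbol at $x_0$, interpolate to $\psi$ with a small cut-off, periodize, expand in a Fourier series whose coefficients are controlled through the derivative bounds \eqref{e:20a} and \eqref{e:20b}, verify the dominance condition \eqref{e:fc1} by shrinking the neighbourhood and \eqref{e:fc2} from the higher derivatives, and conclude via Lemma \ref{Satz: Eindeutigkeit unter Fourier Bedingung} and Proposition \ref{S:Lokalisation}. The only (immaterial) differences are that you carry out the Parseval--Cauchy--Schwarz estimate for the absolute convergence of the Fourier coefficients by hand where the paper cites \cite[Theorem 3.2.16]{grafakos.08}, and you obtain \eqref{e:fc2} by a weighted Cauchy--Schwarz instead of applying the same bound to $\Delta_1 q$.
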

\begin{proof}
{\em Step 1:}
Set $m:=[d/2]+1$ 
and let $u\in\mathbb R^d$. W.l.o.g.\ we may assume that $V$ is open, convex, and bounded.
The first inequality together with the mean value theorem implies
  $$\vert q(x,u)-q(x_0,u)\vert\leq L\vert x-x_0\vert\vert\Re(\psi(u))\vert$$ 
for all $x\in V$. Let $\phi\in C^\infty([0,1]^d,[0,1])$ such that it is constant $1$ on $[1/4,3/4]^d$ 
and compactly supported in $(0,1)^d$. Set 
$$C_\phi:=\sup\{\vert\partial^\alpha\phi(x)\vert:\alpha\in\mathbb N^d,\vert\alpha\vert\leq m+2,x\in\mathbb R^d\},$$ 
let $\ell\geq1 $ be large enough such that the cube centered at $x_0$ of radius $1/\ell$ is contained in $V$, and define
$$q_\ell:[0,1]^d\times\mathbb R^d\rightarrow\mathbb C,(y,u)\mapsto \phi(y)(q(\gamma_\ell(y),u)-\psi(u))+\psi(u),$$
where $\gamma_\ell:[0,1]^d\rightarrow V,y\mapsto\frac{y-h}{\ell}+x_0$ and $h:=(\frac{1}{2},\dots,\frac{1}{2})$.

{\em Step 2:}
For $s\in\{1,\dots,m\}$ we have
\begin{eqnarray*}
 &&\sup\{\vert\partial_y^\beta q_\ell(y,u)\vert:y\in[0,1]^d,\beta\in\mathbb N^d,\vert\beta\vert = s\} \\ 
&\leq&\sup\{\vert q(\gamma_\ell(y),u)-\psi(u)\vert \vert\partial_y^\beta\phi(y)\vert :y\in[0,1]^d,\beta\in\mathbb N^d,\vert\beta\vert = s\}\\
 && + 2^mC_\phi\sup\{\vert \partial_y^\beta(q(\gamma_\ell(y),u)-\psi(u))\vert :y\in[0,1]^d,\beta\in\mathbb N^d,1\leq \vert\beta\vert \leq s\}\\
 &\leq& LC_\phi\sup\{\vert \gamma_\ell(y)-x_0\vert:y\in[0,1]^d\}\vert\Re(\psi(u))\vert\\
 && + 2^mC_\phi\frac{1}{\ell}\sup\{\vert \partial_x^\beta(q(x,u)-\psi(u))\vert :x\in V,\beta\in\mathbb N^d,1\leq \vert\beta\vert \leq s\}\\
&\leq&K_1\frac{\vert\Re(\psi(u))\vert}{\ell}
\end{eqnarray*}
with $K_1 := (1+2^m)LC_\phi$. By \cite[Theorem 3.2.16]{grafakos.08} there is another constant $K_2<\infty$ such that
$$\Vert q_\ell(\cdot,u)\Vert_{A(T)} \leq K_2\frac{\vert\Re(\psi(u))\vert}{\ell}$$
where 
$\Vert q_\ell(\cdot,u)\Vert_{A(T)}=\sum_{n\in\mathbb Z^d\setminus\{0\}}|c_n(u)|$ 
is the absolute sum of the Fourier coefficients 
$$c_n(u):=\int_{[0,1]^d}q_\ell(x,u)e^{-2\pi inx}dx, \quad n\in\mathbb Z^d$$ 
except for the coefficient $a_0(u):=c_0(u)$, 
which appears to be missing in the statement of \cite[Theorem 3.2.16]{grafakos.08}. 
Thus there is $\ell\geq 2L$ such that
$$\Vert q_\ell(\cdot,u)\Vert_{A(T)} \leq \frac{\vert\Re(\psi(u))\vert}{4}.$$
We also have
\begin{eqnarray*}
\vert \Re(a_0(u))-\Re(\psi(u))\vert&\leq&\vert a_0(u)-\psi(u)\vert\\
&\leq& \int_{[0,1]^d}\vert q_\ell(y,u)-\psi(u)\vert dy \\
&\leq& L\vert\Re(\psi(u))\vert\int_{[0,1]^d} \vert \gamma_\ell(y)-x_0\vert dy \\
&\leq& L\frac{\vert\Re(\psi(u))\vert}{\ell}\\
&\leq& \frac{\vert\Re(\psi(u))\vert}{2},
\end{eqnarray*}
which implies $\vert\Re(\psi(u))\vert\leq 2\vert\Re(a_0(u))\vert$. 
For $n\in\mathbb Z^d\backslash\{0\}$ set 
\begin{eqnarray*}
 a_n & := & e^{2\pi i n(h-\ell x_0)}c_n,\\
 b_n & := & ie^{2\pi i n(h-\ell x_0)}c_n.
\end{eqnarray*}
Then
\begin{eqnarray*}
\sum_{n\in\mathbb Z^d\backslash\{0\}}\left(\vert a_n(u)\vert+\vert b_n(u)\vert\right) &\leq &2\sum_{n\in\mathbb Z^d\backslash\{0\}}\vert c_n(u)\vert\\
&=& 2\Vert q_\ell(\cdot,u)\Vert_{A(T)}\\
&\leq &\frac{\vert\Re(\psi(u))\vert}{ and, h2}\\
&\leq &\vert\Re(a_0(u))\vert\\
&= &-\Re(a_0(u)),
\end{eqnarray*}
which implies (\ref{e:fc1}).

{\em Step 3:}
(\ref{e:fc2}) can be deduced by applying \cite[Proposition 3.1.2(10)]{grafakos.08} and the same arguments as in Step 2 to $\Delta_1q$ and using (\ref{e:20b}) instead of (\ref{e:20a}).

{\em Step 4:}
Let $U$ be the cube centered at $x_0$ with radius $1/(4\ell)$. For $x\in U$ with $y:=\gamma_\ell^{-1}(x)$ we have $y\in[1/4,3/4]^d$ and
$$q_\ell(y,u) = \phi(y)(q(\gamma_\ell(y),u)-\psi(u))+\psi(u) = q(x,u).$$
Define $\widetilde q_{x_0,\ell}(x,u):=\widetilde q_\ell(\gamma_\ell^{-1}(x),u)$, $x\in\mathbb R^d$,
where $\widetilde q_\ell(\cdot,u)$ denotes the periodic continuation of
$q_\ell(\cdot,u)$ to $\mathbb R^d$.
The inversion formula \cite[Proposition 3.1.14]{grafakos.08} yields
\begin{eqnarray*}
 \widetilde q_{x_0,\ell}(x,u) & = & \widetilde q_\ell(y,u)\\
        & = & \sum_{n\in\mathbb Z^d} c_n(u)e^{2\pi iny}\\
        & = & \sum_{n\in\mathbb Z^d} c_n(u)e^{2\pi i n(h-\ell x_0)}e^{2\pi i\ell nx}\\
        & = & \sum_{n\in\mathbb Z^d} \left(a_n(u)\cos\left({2\pi}{\ell}nx\right)+b_n(u)\sin\left({2\pi}{\ell}nx\right)\right)\\
        & = & \sum_{n\in\mathbb Z^d} \left(a_n(u)\cos(knx)+b_n(u)\sin(knx)\right)
\end{eqnarray*}
where $k:={2\pi}{\ell}$. Thus (\ref{e:sincos}) holds for $\widetilde q_{x_0,\ell}$. 

Moreover, $\widetilde q_\ell(\cdot,u)$ satisfies the requirements of Theorem \ref{Satz: Existenz unter Stetigkeit} and,
together with Steps 2, 3, those of Lemma \ref{Satz: Eindeutigkeit unter Fourier Bedingung}.
The localisation theorem \ref{S:Lokalisation} yields that existence and uniqueness holds for the $q$-martingale problem.
\end{proof}

\begin{proof}[Proof of Theorem \ref{Satz: Eindeutigkeit unter Elliptizitaet neu}]
 $q$ satisfies the requirements in Lemma \ref{Satz: Eindeutigkeit unter Elliptizitaet}.
\end{proof}

\appendix
\section{Convolutions and total variation}\label{s:appendix}
In this appendix we recall various properties of the total variation and the convolution of complex measures on $\mathcal B(\mathbb R^d)$.
A {\em complex measure} on $\mathbb R^d$ is a function $\mu:\mathcal B(\mathbb R^d)\rightarrow\mathbb C$ such that $\mu(\cup_{A\in \mathcal Z}A) = \sum_{A\in \mathcal Z}\mu(A)$ for any countable family $\mathcal Z\subset\mathcal B(\mathbb R^d)$ of pairwise disjoint sets, cf.\ \cite[Section \S3.4]{dinculeanu.67}.
We denote the set of complex measures on $\mathbb R^d$ by $\mathcal C(\mathbb R^d)$. A {\em decomposition of a measurable set $A$} is a finite system $\mathcal Z$ of pairwise disjoint measurable sets such that $\cup_{B\in \mathcal Z}B =A$. The {\em total variation measure} of $\mu$ is the measure defined by
 $$\vert\mu\vert (A) :=\sup\left\{ \sum_{B\in\mathcal Z} \vert Q(B)\vert : \mathcal Z\text{ is a decomposition of }A\right\},\quad A\in\mathcal B(\mathbb R^d).$$
 The {\em total variation of $\mu$} is defined by
 $ \Vert\mu\Vert := \vert\mu\vert(\mathbb R^d)$.
The {\em product measure} 
of complex measures $\mu,\nu$ on $\mathbb R^d$ resp.\ $\mathbb R^n$
is the complex measure $\mu\otimes\nu$ on $\mathbb R^d\times\mathbb R^n$
given by
 $$(\mu\otimes\nu)(A\times B) = \mu(A)\nu(B),\quad A\in\mathcal B(\mathbb R^d),B\in\mathcal B(\mathbb R^n).$$
 The {\em convolution} of complex measures $\mu,\nu$ on $\mathbb R^d$
 is the complex measure $\mu*\nu$ on $\mathbb R^d$ defined by
 $$(\mu*\nu)(A) = \int\mu(A-x)\nu(dx),\quad A\in\mathcal B(\mathbb R^d).$$
 Complex measures $\mu,\nu$ on $\mathbb R^d$ are called {\em orthogonal} if there is $A\in\mathcal B(\mathbb R^d)$ such that $\mu(B)=0$ for any Borel set $B\subset A$ and $\nu(C)=0$ for any Borel set $C\subset\mathbb R^d\backslash A$.
 The  {\em Dirac measure} concentrated in $a\in\mathbb R^d$ is denoted by $\delta_a$.
The {\em Fourier transform} of a complex measure $\mu$ on $\mathbb R^d$  
is the function $\hat\mu:\mathbb R^d\to\mathbb C$ given by
 $$\hat\mu(u) := \int_{}e^{i\<u,x\>}\mu(dx).$$
 A complex measure $\mu$ on $\mathbb R^d$ is {\em symmetric}
 (resp.\ {\em anti-symmetric}) if $\mu(A) = \mu(-A)$ 
 (resp.\ $\mu(A) = -\mu(-A)$) for any $A\in\mathcal B(\mathbb R^d)$.
 
 Let us recall several properties of complex measures, which can be found or easily derived from results
 in \cite{dinculeanu.67}.
\begin{lem}\label{p:Eigenschaften komplexer masse}
 Let $\mu$, $\nu$ be complex measures on $\mathbb R^d$ and $\eta$ a complex measure on $\mathbb R^n$. Then the following statements hold.
 \begin{enumerate}
  \item $\vert\mu\vert$ is an $\mathbb R_+$-valued (and hence finite) measure.
  \item $\vert\mu(A)\vert\leq\vert\mu\vert(A)$ for any $A\in\mathcal B(\mathbb R^d)$.
  \item $\mu$ is a regular measure in the sense of \cite[Definition \S15.2.1]{dinculeanu.67}.
  \item $\Vert\cdot\Vert$ is a complete norm on $\mathcal C(\mathbb R^d)$.
  \item (Hahn-Jordan decomposition) 
  There are $\mathbb R_+$-valued measures $\mu_1,\mu_2,\mu_3,\mu_4$ such that
  $$\mu = (\mu_1-\mu_2) + i(\mu_3-\mu_4),$$
  where $\mu_1$, $\mu_2$ are orthogonal and $\mu_3$, $\mu_4$ are orthogonal.
  \item A measurable function $f:\mathbb R^d\rightarrow\mathbb C$ is $\mu$-integrable if and only if it is $\vert\mu\vert$-integrable and in that case
  $$ \left\vert\int_{} fd\mu\right\vert\leq \int_{}\vert f\vert d\vert\mu\vert. $$
  \item Any bounded measurable function $f:\mathbb R^d\rightarrow\mathbb C$ is $\mu$-integrable.
 
  \item For any $\mu*\nu$-integrable function $f$ we have
    $$\int_{} f(v) (\mu*\nu)(dv) = \int_{}\int_{} f(v+w) \mu(v)\nu(w).$$
  \item $\vert\mu*\nu\vert$ is absolutely continuous with respect to $\vert\mu\vert*\vert\nu\vert$ with density bounded by $1$.
  \item $\Vert\mu*\nu\Vert\leq \Vert\mu\Vert\Vert\nu\Vert$
 \item $\vert\mu\otimes\eta\vert=\vert\mu\vert\otimes\vert\eta\vert$
  \item If $\mu$ and $\nu$ are orthogonal, then $\vert\mu+\nu\vert=\vert\mu\vert+\vert\nu\vert$.
  \item $(\mathcal C(\mathbb R^d),+,*,\Vert\cdot\Vert)$ is a complex commutative Banach algebra with unit $\delta_0$.
\item The Fourier transform $\hat{\ }$ is a one-to-one homomorphism of algebras which is continuous with respect to the total variation norm and the uniform norm, respectively. 
\item $\mu$ is symmetric resp.\ anti-symmetric if its Fourier transform is a symmetric resp.\ anti-symmetric function.
  \end{enumerate}
\end{lem}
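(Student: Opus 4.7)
My plan is to treat each of the fifteen items as a standard measure-theoretic fact and either cite \cite{dinculeanu.67} directly or reduce it to one or two of the earlier items in the list. The lemma is really a bookkeeping statement assembling the basic structure of $\mathcal C(\mathbb R^d)$; no single item should require a long argument, so I will organize the proof as a sequence of short paragraphs grouped by the technique used.

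First, items (1)--(7) are essentially in Dinculeanu: finiteness and $\sigma$-additivity of $|\mu|$, the inequality $|\mu(A)|\le|\mu|(A)$, regularity, completeness of the total variation norm, Hahn--Jordan decomposition, the integrability equivalence $\mu$-integrable $\Leftrightarrow$ $|\mu|$-integrable with $|\int f\,d\mu|\le\int|f|\,d|\mu|$, and the integrability of bounded measurable functions. For each I would give a precise pointer to the corresponding proposition or paragraph in \cite{dinculeanu.67} and otherwise let the proof be quotation.

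Next, items (8)--(13) form the algebraic block. For (8) I would apply classical Fubini to each of the four nonnegative components in the Hahn--Jordan decomposition of $\mu$ and $\nu$ from (5), expand the sixteen resulting integrals, and recombine, using (6) to justify integrability. Item (9) follows from (8) applied to $f=1_A$: writing $\mu=\mu_1-\mu_2+i(\mu_3-\mu_4)$ and similarly for $\nu$, the signed convolution $\mu*\nu$ is bounded in absolute value pointwise (as a set function on decompositions) by $|\mu|*|\nu|$, and a standard approximation by partitions upgrades this to absolute continuity with density bounded by one. Item (10) is an immediate consequence of (9) on taking $A=\mathbb R^d$ and using Fubini for positive measures. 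Item (11) is proved by showing both inequalities: $|\mu\otimes\eta|\le|\mu|\otimes|\eta|$ from the triangle inequality applied to rectangle decompositions, and the reverse inequality by choosing decompositions of $A\times B$ into rectangles that achieve the supremum in the definition of $|\mu|\otimes|\eta|$. Item (12) is immediate from (5) applied to $\mu+\nu$ on $A$ and $\mathbb R^d\setminus A$, where $A$ is the orthogonality set. With (10) and (4) in hand, (13) is just the verification that $(\mathcal C(\mathbb R^d),+,*,\|\cdot\|)$ satisfies the Banach algebra axioms; associativity, commutativity and distributivity of $*$ come from (8), and $\delta_0$ is clearly a two-sided unit.

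Finally, items (14) and (15) concern the Fourier transform. Continuity of $\widehat{\phantom{\mu}}$ from $(\mathcal C(\mathbb R^d),\|\cdot\|)$ to $(C_b(\mathbb R^d),\|\cdot\|_\infty)$ is immediate from $|\widehat\mu(u)|\le\|\mu\|$, and the homomorphism property $\widehat{\mu*\nu}=\widehat\mu\,\widehat\nu$ follows from (8) with $f(v)=e^{iuv}$. Injectivity reduces, via the Hahn--Jordan decomposition (5), to the uniqueness theorem for Fourier transforms of finite nonnegative Borel measures on $\mathbb R^d$, which is classical. Item (15) is a change of variables $x\mapsto -x$ in $\int e^{iux}\mu(dx)$ combined with injectivity from (14): symmetry of $\widehat\mu$ means $\widehat\mu(u)=\widehat{\mu\circ(-\mathrm{id})}(u)$, whence $\mu=\mu\circ(-\mathrm{id})$, and similarly in the anti-symmetric case.

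The only step that requires genuine care, rather than citation or routine manipulation, is the density bound in (9), because the natural estimate $|\mu*\nu|(A)\le(|\mu|*|\nu|)(A)$ from partition suprema must be promoted to pointwise-density form; the rest of the lemma is standard and I would keep the write-up correspondingly brief.
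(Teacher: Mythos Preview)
Your proposal is correct and in fact more detailed than the paper's own treatment: the paper gives no proof at all for this lemma, merely prefacing it with the remark that these properties ``can be found or easily derived from results in \cite{dinculeanu.67}.'' Your plan to cite Dinculeanu for items (1)--(7) and to sketch the short reductions for (8)--(15) is entirely consistent with this, and simply fills in what the paper leaves implicit.
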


For an introduction to analytic functional calculus see e.g.\ \cite[Definition 3.3.1]{palmer.94}. For a complex measure $\mu$ and a function $f:\mathbb C\to\mathbb C$ which is holomorphic on a neighbourhood of the spectrum of $\mu$ we write $f(\mu)$ for the complex measure obtained by the analytic functional calculus applied to $\mu$ and $f$.
\begin{lem}\label{l:symmetrie der variation}
 Let $\mu$ be a complex measure on $\mathbb R^d$. If $\mu$ is symmetric or anti-symmetric, then $\vert\mu\vert$ is symmetric. If $f:\mathbb C\to\mathbb C$ is holomorphic on a neighbourhood of the spectrum of $\mu$ and $\mu$ is symmetric, then $f(\mu)$ is symmetric. If $f$ is an odd entire function and $\mu$ is anti-symmetric, then $f(\mu)$ is anti-symmetric. If $f$ is an even entire function and $\mu$ is anti-symmetric, then $f(\mu)$ is symmetric.
\end{lem}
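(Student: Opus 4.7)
The plan is to introduce the reflection map $S:\mathbb R^d\to\mathbb R^d$, $S(x)=-x$, and consider the induced pushforward operator $T\mu:=S_*\mu$, i.e.\ $T\mu(A)=\mu(-A)$. This $T$ is a continuous linear involution on the Banach algebra $(\mathcal C(\mathbb R^d),+,*,\Vert\cdot\Vert)$, and it is a unital algebra homomorphism: $T\delta_0=\delta_0$ and $T(\mu*\nu)=T\mu*T\nu$ (the latter by the substitution rule for the pushforward under the linear map $S$). In this language, $\mu$ being symmetric is the statement $T\mu=\mu$ and $\mu$ being anti-symmetric is the statement $T\mu=-\mu$.

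For the first assertion I would show that variation commutes with $T$: every measurable decomposition $\mathcal Z$ of $A$ corresponds bijectively to a decomposition $-\mathcal Z=\{-B:B\in\mathcal Z\}$ of $-A$, so
$$|T\mu|(A)=\sup_{\mathcal Z}\sum_{B\in\mathcal Z}|(T\mu)(B)|=\sup_{\mathcal Z}\sum_{B\in\mathcal Z}|\mu(-B)|=|\mu|(-A)=T|\mu|(A).$$
If $\mu$ is symmetric, $T|\mu|=|T\mu|=|\mu|$; if $\mu$ is anti-symmetric, $T|\mu|=|T\mu|=|-\mu|=|\mu|$ (using Lemma~\ref{p:Eigenschaften komplexer masse}(1),(6) to see that $|c\mu|=|c|\,|\mu|$ for scalars $c$). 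In both cases $|\mu|$ is symmetric.

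For the functional calculus statements I would invoke the naturality of the Dunford-Riesz analytic functional calculus: for a continuous unital algebra homomorphism $T$ on a commutative Banach algebra one has $\mathrm{spec}(T\mu)=\mathrm{spec}(\mu)$, and if $f$ is holomorphic on a neighbourhood of this spectrum, the contour integral representation
$$f(\mu)=\frac{1}{2\pi i}\oint_\Gamma f(z)\,(z\delta_0-\mu)^{-1}\,dz$$
combined with the continuity of $T$ yields $T(f(\mu))=f(T\mu)$. If $\mu$ is symmetric, $T\mu=\mu$, hence $T(f(\mu))=f(\mu)$ and $f(\mu)$ is symmetric. If $\mu$ is anti-symmetric, $T\mu=-\mu$, so $T(f(\mu))=f(-\mu)$. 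When $f$ is entire, the power series $f(z)=\sum_n a_n z^n$ converges absolutely in the Banach algebra norm (since $\|\mu^{*n}\|\le\|\mu\|^n$), and $(-\mu)^{*n}=(-1)^n\mu^{*n}$ gives $f(-\mu)=\sum_n(-1)^n a_n\mu^{*n}$. For $f$ odd this equals $-f(\mu)$, so $f(\mu)$ is anti-symmetric; for $f$ even this equals $f(\mu)$, so $f(\mu)$ is symmetric.

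The main obstacle is the naturality identity $T(f(\mu))=f(T\mu)$: once it is established, every remaining assertion reduces to trivial algebraic manipulation. For entire $f$ one can bypass the contour integral and work directly with the absolutely convergent power series, so the only place a genuine functional-calculus argument is needed is the symmetric case with merely holomorphic $f$, where the continuity of $T$ and its commutation with resolvents $(z\delta_0-\mu)^{-1}\mapsto (z\delta_0-T\mu)^{-1}$ must be made explicit.
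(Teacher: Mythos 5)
Your proof is correct, and for the nontrivial (anti-symmetric) part it takes a genuinely different route from the paper. The first assertion is handled identically (a decomposition of $A$ corresponds bijectively to one of $-A$), and your treatment of the symmetric case is the paper's argument in slightly different clothing: the paper shows directly that the resolvent $R(z,\mu)$ is symmetric and that the contour integral of symmetric measures is symmetric, which is exactly your identity $T(R(z,\mu))=R(z,T\mu)$ specialised to $T\mu=\mu$. For anti-symmetric $\mu$, however, the paper decomposes $R(z,\mu)=\tfrac12(\alpha_z+\beta_z)$ into a symmetric part $\alpha_z=R(z,\mu)-R(-z,\mu)$ and an anti-symmetric part $\beta_z=R(z,\mu)+R(-z,\mu)$ (identified via their Fourier transforms) and then argues that, depending on the parity of $f$, one of the two resulting contour integrals over a symmetric path vanishes. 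You instead push the reflection through the whole functional calculus once, $T(f(\mu))=f(T\mu)=f(-\mu)$, and finish with the absolutely convergent power series $f(-\mu)=\sum_n(-1)^na_n\mu^{*n}$. Your route is cleaner and avoids the somewhat delicate contour-parity bookkeeping (which, as written in the paper, appears to have the roles of the two summands interchanged — a harmless typo, but one your argument sidesteps entirely). What the paper's decomposition buys in exchange is the explicit symmetric/anti-symmetric splitting of the resolvent with support information, which is reused in the proof of Lemma~\ref{l:spezielle Symmetrie}; your argument does not produce that by-product. Two small points to make explicit if you write this up: $T$ is an isometry for the total variation norm (so it passes through the vector-valued contour integral), and it is an involution, hence an automorphism, which is what gives equality $\mathrm{spec}(T\mu)=\mathrm{spec}(\mu)$ rather than mere inclusion.
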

\begin{proof}
 Let $\mu$ be symmetric or anti-symmetric and $A\in\mathcal B(\mathbb R^d)$. Let $\epsilon>0$ and $\mathcal Z$ be a decomposition of $A$ such that $$\vert\mu\vert(A)\leq \epsilon+\sum_{B\in\mathcal Z}\vert\mu(B)\vert.$$
 Then
$$ \vert \mu\vert(A)-\epsilon\leq \sum_{B\in\mathcal Z}\vert\mu(-B)\vert 
                            \leq \vert\mu\vert(-A).$$
 Hence $\vert\mu\vert(A)\leq \vert\mu\vert(-A)\leq \vert\mu\vert(A)$, which implies symmetry.

 Let $\mu$ be symmetric, $f$ an entire function and $z\in\mathbb C$. Then $z\delta_0-\mu$ is symmetric. If $z$ is not in the spectrum of $\mu$, then the measure $R(z,\mu)$ with the property $(z\delta_0-\mu)*R(z,\mu)=\delta_0$ is symmetric as well. Thus
  $$ f(\mu) = \frac{1}{2\pi i}\int_{\Gamma} f(z)R(z,\mu)dz$$
 is symmetric as well, where $\Gamma$ is a suitable integration path.
 
 Now let $\mu$ be anti-symmetric and $z\in\mathbb C$ outside of the spectrum of $\mu$. Then
  \begin{align*}
   (R(z,\mu)-R(-z,\mu))\hat{\ }(u) = \frac{1}{z+\hat\mu(u)}+\frac{1}{z-\hat\mu(u)},\quad u\in\mathbb R^d.
  \end{align*}
 By Lemma \ref{p:Eigenschaften komplexer masse}(15) $\alpha_z:=R(z,\mu)-R(-z,\mu)$ is symmetric. Similar arguments yield that $\beta_z:=R(z,\mu)+R(-z,\mu)$ is anti-symmetric and we obviously have $R(z,\mu)=1/2(\alpha_z+\beta_z)$. Let $\Gamma$ be a symmetric path around the spectrum of $\mu$. Then
 \begin{equation}\label{e:fformel} 
  f(\mu) = \frac{1}{4\pi i}\int_{\Gamma}f(z)\alpha_zdz + \frac{1}{4\pi i}\int_{\Gamma}f(z)\beta_zdz.
 \end{equation}
  Observe that the first summand is symmetric and the second summand is anti-symmetric. If $f$ is even, then the first summand vanishes and hence $f(\mu)$ is symmetric. If $f$ is odd, then the second summand vanishes and hence $f(\mu)$ is anti-symmetric.
\end{proof}

\begin{lem}\label{l:spezielle Symmetrie}
 Let $\mu=z(\delta_a-\delta_{-a})$ for some $a\in\mathbb R^d$, $z\in\mathbb C$. Then $\vert\exp(\mu)\vert$ is symmetric.
\end{lem}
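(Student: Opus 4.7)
The plan is to use the decomposition $\exp(\mu)=\cosh(\mu)+\sinh(\mu)$ provided by the analytic functional calculus, and to verify that the two summands are orthogonal so that the total variation splits additively. Note that $\mu$ is anti-symmetric since $\mu(-A)=z(\delta_a(-A)-\delta_{-a}(-A))=-\mu(A)$. If $a=0$ then $\mu=0$ and $\exp(\mu)=\delta_0$ is trivially symmetric, so I restrict to $a\neq 0$ in what follows.

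First I would compute the supports. By induction, $\mu^{*n}=z^n\sum_{k=0}^n\binom{n}{k}(-1)^{n-k}\delta_{(2k-n)a}$, so $\mu^{*n}$ is carried by the set $\{ja:j\in\mathbb Z,\ j\equiv n\pmod 2,\ |j|\le n\}$. Consequently, the convergent series
\begin{equation*}
\cosh(\mu)=\sum_{n\text{ even}}\frac{\mu^{*n}}{n!},\qquad \sinh(\mu)=\sum_{n\text{ odd}}\frac{\mu^{*n}}{n!}
\end{equation*}
are supported on the disjoint sets $A_e:=\{ja:j\in 2\mathbb Z\}$ and $A_o:=\{ja:j\in 2\mathbb Z+1\}$ respectively (here I use that $a\neq 0$ makes the map $j\mapsto ja$ injective). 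Hence $\cosh(\mu)$ and $\sinh(\mu)$ are orthogonal in the sense of the appendix.

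By Lemma \ref{p:Eigenschaften komplexer masse}(12) this orthogonality gives
\begin{equation*}
|\exp(\mu)|=|\cosh(\mu)+\sinh(\mu)|=|\cosh(\mu)|+|\sinh(\mu)|.
\end{equation*}
Now I invoke Lemma \ref{l:symmetrie der variation}: since $\cosh$ is an even entire function and $\mu$ is anti-symmetric, $\cosh(\mu)$ is symmetric; since $\sinh$ is odd, $\sinh(\mu)$ is anti-symmetric. Applying the first sentence of the same lemma, the total variations $|\cosh(\mu)|$ and $|\sinh(\mu)|$ are both symmetric. The sum of two symmetric measures is symmetric, which yields the claim.

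The only non-routine step is the disjointness of the supports of $\cosh(\mu)$ and $\sinh(\mu)$; once this is in hand, the remainder is a direct bookkeeping exercise using the previous lemmas. A subtle point worth writing out carefully is that the analytic functional calculus expression $f(\mu)=\frac{1}{2\pi i}\int_\Gamma f(z)R(z,\mu)\,dz$ coincides with the Taylor series $\sum_n f^{(n)}(0)\mu^{*n}/n!$ when $f$ is entire, so that the explicit computation of $\mu^{*n}$ legitimately identifies the supports of $\cosh(\mu)$ and $\sinh(\mu)$.
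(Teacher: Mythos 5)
Your proof is correct and takes essentially the same route as the paper: decompose $\exp(\mu)=\cosh(\mu)+\sinh(\mu)$, note that the two summands are carried by the disjoint sets of even resp.\ odd multiples of $a$, apply Lemma \ref{p:Eigenschaften komplexer masse}(12) to split the total variation, and invoke Lemma \ref{l:symmetrie der variation} for the symmetry of each piece. The only (harmless) difference is that you identify the supports from the explicit power series for $\mu^{*n}$, whereas the paper reads them off from the Fourier transforms of the resolvents $R(\pm w,\mu)$ and formula (\ref{e:fformel}).
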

\begin{proof}
 For $w\in\mathbb C$ with $\vert w\vert>2|z|$ define
 \begin{align*}
  \alpha_w:=&\ R(w,\mu)-R(-w,\mu), \\
   \beta_w:=&\ R(w,\mu)+R(-w,\mu),
 \end{align*}
 where $R(w,\mu)$ is the complex measure such that $R(w,\mu)*(w\delta_0-\mu)=\delta_0$. 
From their Fourier transforms we conclude that that $\alpha_w$ is supported on $O:=\{ka:k\in\mathbb Z,k\text{ is odd}\}$ and $\beta_w$ is supported on $E:=\{ka:k\in\mathbb Z,k\text{ is even}\}$. 
 By (\ref{e:fformel}) and the subsequent observation, $\sinh(\mu)$ is concentrated on $O$ while $\cosh(\mu)$ is concentrated on $E$. Proposition \ref{p:Eigenschaften komplexer masse} yields
 $$\vert\exp(\mu)\vert=\vert\sinh(\mu)\vert+\vert\cosh(\mu)\vert.$$
 Lemma \ref{l:symmetrie der variation} yields that $\vert\exp(\mu)\vert$ is the sum of symmetric measures and hence symmetric.
\end{proof}

\begin{rem}\label{b:Momente}
 Let $\mu$ be a complex measure on $\mathbb R^d$ such that $\int\vert v\vert^2|\mu|(dv)<\infty$. Then
\begin{align*}
 \exp(\mu)(\mathbb R^d) =&\ \exp(\mu(\mathbb R^d)), \\
 \int_{} v\exp(\mu)(dv) =&\ \int_{}v\mu(dv)\exp(\mu(\mathbb R^d)), \\
 \int_{} \vert v\vert^2\exp(\mu)(dv) =&\ \left(\int_{}\vert v\vert^2\mu(dv)+2\left\vert\int_{}v\mu(dv)\right\vert^2\right)\exp(\mu(\mathbb R^d)).
\end{align*}
\end{rem}

\section{Infinite product measures and convolutions}
In this section we recall the definition and properties for infinite product measures and infinite convolution 
for complex Borel measures on $\mathbb R^d$.
Denote by $\mathcal B(\mathbb R^d)^\mathbb N$  the $\sigma$-algebra on $(\mathbb R^d)^{\mathbb N}$ which is generated 
by the mappings
 $$ \pi_n :(\mathbb R^d)^{\mathbb N} \rightarrow \mathbb R^d,\quad  (x_k)_{k\in\mathbb N} \mapsto x_n. $$
It is also generated by the algebra
 $$ \mathcal R:=\bigcup_{n\in\mathbb N} \sigma(\pi_1,\dots,\pi_n).$$

\begin{defn}
  Let $(\mu_n)_{n\in\mathbb N}$ be a sequence of complex Borel measures on $\mathbb R^d$
  and define  $a_n:=\mu_n(\mathbb R^d)$, $n\in\mathbb N$. Assume that $a:=\Pi_{n=1}^\infty a_n$ exists in $\mathbb C$.
  A complex Borel measure $\mu$ on $((\mathbb R^d)^\mathbb N,\mathcal B(\mathbb R^d)^\mathbb N)$ is the {\em infinite product measure} of $(\mu_n)_{n\in\mathbb N}$ if 
 \begin{equation}\label{e:infprod}
  \int f(\pi_1,\dots,\pi_n) d\mu = \frac{a}{\Pi_{j=1}^na_j}\int\cdots\int f(x_1,\dots,x_n) \mu_1(dx_1),\dots,\mu_n(dx_n) 
 \end{equation}
      for any $n\in\mathbb N$ and any bounded measurable function $f:(\mathbb R^d)^n\rightarrow \mathbb R$.
\end{defn} 
If it exists, the infinite product measure is unique because it is unique on $\mathcal R$.
It is denoted by $\otimes_{n=1}^\infty\mu_n$.
The following existence statement is classical.
\begin{prop}\label{p:unend product}
   Let $(P_n)_{n\in\mathbb N}$ be a sequence of probability measures on $\mathbb R^d$. Then there is a unique probability measure $P$ on $((\mathbb R^d)^{\mathbb N},\mathcal B(\mathbb R^d)^\mathbb N)$ such that $(\pi_n)_{n\in\mathbb N}$ is a sequence of independent random variables with 
   $P^{\pi_n} = P_n$.
\end{prop}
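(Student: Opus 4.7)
The plan is to invoke (or mirror the proof of) Kolmogorov's extension theorem. First, I would define a finitely additive set function $P$ on the algebra $\mathcal R = \bigcup_{n\in\mathbb N}\sigma(\pi_1,\dots,\pi_n)$ by putting
\[
P(B\times (\mathbb R^d)^{\mathbb N}) := (P_1\otimes\cdots\otimes P_n)(B)
\]
for every cylinder set with $B\in \mathcal B(\mathbb R^d)^n$. Consistency under enlarging $n$ is automatic because the $P_k$ are probability measures, so $P$ is well-defined on $\mathcal R$ and finitely additive there.

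The core step is to verify $\sigma$-additivity on $\mathcal R$, equivalently: if $(A_k)_{k\in\mathbb N}\subset\mathcal R$ decreases to $\emptyset$, then $P(A_k)\to0$. I would argue by contradiction as in the standard Kolmogorov proof: assume $P(A_k)\geq \varepsilon$ for all $k$. For each $A_k$, viewed as a cylinder based on the first $n_k$ coordinates, use tightness of the finite-dimensional product $P_1\otimes\cdots\otimes P_{n_k}$ on $\mathbb R^{dn_k}$ to find a compact $K_k$ such that $A_k':= A_k\cap (K_k\times(\mathbb R^d)^\mathbb N)$ satisfies $P(A_k\setminus A_k')\leq \varepsilon 2^{-k-1}$. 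Then $B_k:=\bigcap_{j\leq k}A_j'$ is still a non-empty cylinder set with compact base for each $k$. Picking $\omega^{(k)}\in B_k$ and extracting a diagonal subsequence whose first $n$ coordinates converge for every $n$ (possible by compactness of the bases), the limit $\omega^\ast$ lies in every $A_k$, contradicting $\bigcap A_k=\emptyset$. Hence $P$ is $\sigma$-additive on $\mathcal R$, and Carathéodory's extension theorem produces a unique probability measure on $\mathcal B(\mathbb R^d)^\mathbb N=\sigma(\mathcal R)$.

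Finally, to obtain the independence and marginal statements, I would observe directly from the construction that for any $n$ and any Borel sets $B_1,\dots,B_n\subset\mathbb R^d$,
\[
P(\pi_1\in B_1,\dots,\pi_n\in B_n) = \prod_{j=1}^n P_j(B_j),
\]
which simultaneously identifies $P^{\pi_j}=P_j$ and gives mutual independence on the $\pi$-system of cylinder events; a $\pi$--$\lambda$ argument extends it to all Borel events. Uniqueness follows from uniqueness of measures agreeing on the generating $\pi$-system $\mathcal R$.

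The main obstacle is the $\sigma$-additivity step on $\mathcal R$. All the finite-dimensional consistency is trivial; the genuine content is the compactness/tightness argument that excludes ``mass escaping to infinity'' along a decreasing cylinder sequence. In the present Euclidean setting this is harmless because each $P_1\otimes\cdots\otimes P_n$ is automatically tight, but it is the step that does not reduce to pure algebra.
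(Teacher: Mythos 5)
The paper offers no proof of this proposition at all: it is introduced with the sentence ``The following existence statement is classical'' and left as a citation-free classical fact, so there is nothing to compare your argument against except the standard literature. Your proposal is exactly the standard Kolmogorov-extension construction and is essentially correct: define the finitely additive set function on the cylinder algebra $\mathcal R$, prove continuity at $\emptyset$ by a tightness/compactness argument, extend by Carath\'eodory, and read off independence and the marginals on the generating $\pi$-system. One detail needs tightening: if $A_k$ is the cylinder over a Borel base $B_k\subset(\mathbb R^d)^{n_k}$ and you merely intersect with a compact rectangle $K_k$, the base of $A_k'$ is $B_k\cap K_k$, which is bounded but not closed, so the coordinatewise limit $\omega^\ast$ of your diagonal subsequence is only guaranteed to lie in the closure of each base, not in $A_k$ itself. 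The standard fix is to use inner regularity of the finite Borel measure $P_1\otimes\cdots\otimes P_{n_k}$ to choose a \emph{compact} $K_k\subseteq B_k$ with $(P_1\otimes\cdots\otimes P_{n_k})(B_k\setminus K_k)\leq\varepsilon 2^{-k-1}$ and replace $A_k$ by the cylinder over $K_k$; then the bases of the $B_k$'s are compact and the limit point genuinely lies in every $A_k$. You might also note that for \emph{independent} products one can avoid topology entirely via the Ionescu--Tulcea theorem (the transition kernels are constant), which proves the statement on arbitrary measurable spaces; your route buys nothing extra here but is the more familiar one in the Euclidean setting.
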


This can be easily lifted to finite measures as long as the product of their total mass converges to a finite non-zero number.

\begin{lem}\label{l:unend product endl}
  Let $(\mu_n)_{n\in\mathbb N}$ be a sequence of finite Borel measures on $\mathbb R^d$ and define $a_n:=\mu_n(\mathbb R^d)$ for any $n\in\mathbb N$. Assume that $a:=\Pi_{n=1}^\infty a_n\in(0,\infty)$. Then the infinite product measure of $(\mu_n)_{n\in\mathbb N}$ exists.
\end{lem}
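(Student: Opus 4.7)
The plan is to reduce the claim to the probabilistic case handled by Proposition \ref{p:unend product}. Since $a=\prod_{n=1}^\infty a_n\in(0,\infty)$, each $a_n>0$, so the normalised measures $P_n:=\mu_n/a_n$ are genuine probability measures on $\mathbb R^d$. Apply Proposition \ref{p:unend product} to the sequence $(P_n)_{n\in\mathbb N}$ to obtain a probability measure $P$ on $((\mathbb R^d)^{\mathbb N},\mathcal B(\mathbb R^d)^{\mathbb N})$ under which the projections $(\pi_n)$ are independent with $P^{\pi_n}=P_n$.

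Then define the candidate product measure as
\[
 \mu:=a\,P,
\]
which is a finite nonnegative Borel measure on the infinite product space with total mass $a$. Verification of the defining property \eqref{e:infprod} is a direct computation: for any $n\in\mathbb N$ and any bounded measurable $f:(\mathbb R^d)^n\to\mathbb R$, independence under $P$ together with $P^{\pi_k}=\mu_k/a_k$ yields
\begin{align*}
\int f(\pi_1,\dots,\pi_n)\,d\mu
&=a\int\!\cdots\!\int f(x_1,\dots,x_n)\,P_1(dx_1)\cdots P_n(dx_n)\\
&=\frac{a}{\prod_{j=1}^n a_j}\int\!\cdots\!\int f(x_1,\dots,x_n)\,\mu_1(dx_1)\cdots\mu_n(dx_n),
\end{align*}
which is precisely \eqref{e:infprod}. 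Uniqueness has already been noted in the text (the relation \eqref{e:infprod} determines $\mu$ on the algebra $\mathcal R$, hence on $\mathcal B(\mathbb R^d)^{\mathbb N}$ by the uniqueness part of Carathéodory's extension theorem, using that $\mu$ is finite).

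The only potential obstacle is a triviality: making sure the hypothesis $a\in(0,\infty)$ is used to conclude $a_n>0$ for every $n$, so that the normalisations $P_n$ make sense. Once this is noted, the rest of the argument is a mechanical reduction; no measure-theoretic subtleties arise because $\mu$ is obtained as a positive scalar multiple of a probability measure produced by the classical Kolmogorov-type construction in Proposition \ref{p:unend product}.
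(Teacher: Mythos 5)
Your proof is correct and follows exactly the route the paper takes: normalise each $\mu_n$ to a probability measure $P_n=\mu_n/a_n$ (using $a\in(0,\infty)$ to guarantee $a_n>0$), invoke Proposition \ref{p:unend product} to get $P$, and set $\mu:=aP$. The explicit verification of \eqref{e:infprod} via independence is a welcome addition that the paper leaves implicit.
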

\begin{proof}
  Define $P_n := \mu_n/a_n$. Then $(P_n)_{n\in\mathbb N}$ satisfies the requirements of Proposition \ref{p:unend product} and, hence, there is a probability measure $P$ as in Proposition \ref{p:unend product}. The measure $\mu:=aP$ has the required property.
\end{proof}

\begin{prop}\label{p:unend product compl}
  Let $(\mu_n)_{n\in\mathbb N}$ be a sequence of complex Borel measures on $\mathbb R^d$ and define $a_n:=\mu_n(\mathbb R^d)$ and $c_n:=|\mu_n|(\mathbb R^d)$ for any $n\in\mathbb N$. Assume that $c:=\Pi_{n=1}^\infty c_n\in(0,\infty)$ and that $a:=\Pi_{n=1}^\infty a_n$ exists in $\mathbb C$. 
  Then the infinite product measure 
  $(\mu_n)_{n\in\mathbb N}$ exists and we have 
$|\otimes_{n=1}^\infty\mu_n|=\otimes_{n=1}^\infty|\mu_n|$.
\end{prop}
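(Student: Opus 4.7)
The plan is to construct $\mu:=\otimes_{n=1}^\infty\mu_n$ by tilting the positive infinite product $\nu:=\otimes_{n=1}^\infty|\mu_n|$, which exists by Lemma \ref{l:unend product endl} since $c=\prod_{n=1}^\infty c_n\in(0,\infty)$. The complex Radon--Nikod\'ym theorem furnishes measurable densities $f_n:\mathbb R^d\to\mathbb C$ with $|f_n|\equiv 1$ and $\mu_n=f_n\cdot|\mu_n|$. Viewing each $f_n$ as a function of the $n$-th coordinate $\pi_n$ on $(\mathbb R^d)^{\mathbb N}$, I would form the partial products
$$h_N:=\prod_{n=1}^N f_n\circ\pi_n\in L^\infty(\nu),\qquad |h_N|\equiv 1,$$
and take $\mu:=h\cdot\nu$ for an $L^2(\nu)$-limit $h$ of $(h_N)$.

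The crux is showing that $(h_N)$ is Cauchy in $L^2(\nu)$. Exploiting $|h_N|\equiv 1$, for $N<N'$ one computes
$$\int|h_{N'}-h_N|^2\,d\nu=2c-2\Re\int\prod_{n=N+1}^{N'}f_n(\pi_n)\,d\nu,$$
and the cylindrical formula for $\nu$ together with $\int f_n\,d|\mu_n|=a_n$ evaluates the remaining integral to $c\prod_{n=N+1}^{N'}a_n\big/\prod_{n=N+1}^{N'}c_n$. Because $\prod a_n$ converges to a non-zero limit $a$ and $\prod c_n$ converges to $c>0$, both tail products tend to $1$ as $N,N'\to\infty$, so the whole expression tends to zero. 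Denote the $L^2(\nu)$-limit by $h$; extracting a pointwise-convergent subsequence and using $|h_N|\equiv 1$ yields $|h|=1$ $\nu$-almost everywhere.

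It then remains to verify the defining relation \eqref{e:infprod} for $\mu:=h\cdot\nu$. For a bounded measurable $g:(\mathbb R^d)^M\to\mathbb C$ and $N\geq M$, Fubini and the cylindrical formula for $\nu$ give
$$\int g(\pi_1,\dots,\pi_M)\,h_N\,d\nu=\frac{c\prod_{n=M+1}^N a_n}{\prod_{n=1}^N c_n}\int g\,d(\mu_1\otimes\cdots\otimes\mu_M).$$
Cauchy--Schwarz and $L^2$-convergence let the left-hand side pass to $\int g(\pi_1,\dots,\pi_M)\,d\mu$ as $N\to\infty$, while the prefactor on the right tends to $a/\prod_{n=1}^M a_n$. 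This is precisely \eqref{e:infprod}, and since $|h|=1$ $\nu$-a.e., the standard density formula for the total variation of a complex measure gives $|\mu|=|h|\cdot\nu=\nu=\otimes_{n=1}^\infty|\mu_n|$.

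The main obstacle is the $L^2$-Cauchy estimate: the argument succeeds precisely because $|h_N|\equiv 1$ collapses the squared difference to an expression determined entirely by the tail products $\prod_{n=N+1}^{N'}a_n$ and $\prod_{n=N+1}^{N'}c_n$, both of which are Cauchy by the convergence hypotheses on $\prod a_n$ and $\prod c_n$. Everything that follows is routine Fubini together with passage to the limit.
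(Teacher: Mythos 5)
Your proof is correct, but it takes a genuinely different route from the paper's. The paper defines the candidate measure as a finitely additive set function on the cylinder algebra $\mathcal R$ via \eqref{e:mutilde}, obtains $\sigma$-additivity from continuity at $\varnothing$ of the already-constructed positive product $\nu=\otimes_{n=1}^\infty|\mu_n|$ (which dominates the cylinder values up to a bounded factor), extends by Carath\'eodory, and then deduces $|\mu|=\nu$ from the finite-dimensional identity $|\mu\otimes\eta|=|\mu|\otimes|\eta|$ of Lemma \ref{p:Eigenschaften komplexer masse}(11) together with uniqueness on the generating algebra. You instead realise $\mu$ as $h\cdot\nu$, where $h$ is the $L^2(\nu)$-limit of the unimodular partial products of the polar densities $f_n$ --- a Kakutani-type argument. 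What your route buys is that the absolute continuity $\mu\ll\nu$ with a density of modulus one is explicit, so $|\mu|=\nu$ drops out immediately and no appeal to Carath\'eodory or to well-definedness and continuity at $\varnothing$ on $\mathcal R$ is needed; the price is the polar decomposition and the $L^2$-Cauchy computation, which you carry out correctly. One shared caveat: your estimate uses that the tail products $\prod_{n=N+1}^{N'}a_n$ tend to $1$, which requires $a\neq0$ and $a_n\neq 0$ for all $n$. This is not literally contained in the hypothesis that ``$a$ exists in $\mathbb C$'', but it is implicitly forced by the definition of the infinite product measure (which divides by $\prod_{j=1}^na_j$) and is equally needed in the paper's own argument; indeed the conclusion $|\otimes_{n=1}^\infty\mu_n|=\otimes_{n=1}^\infty|\mu_n|$ would fail if $a=0$.
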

\begin{proof}
By Lemma \ref{l:unend product endl} the infinite product measure $\nu$ of $(|\mu_n|)_{n\in\mathbb N}$ exists.
Define a mapping $\tilde\mu:\mathcal R\to\mathbb C$ via 
\begin{equation}\label{e:mutilde}
 \tilde\mu((\pi_1,\dots,\pi_n)^{-1}(A)) = \frac{a}{\Pi_{j=1}^na_j}\int\cdots\int 1_A(x_1,\dots,x_n) \mu_1(dx_1),\dots,\mu_n(dx_n)
\end{equation}
 for $A\in\mathcal B(\mathbb R^d)^n, n\in\mathbb N$.
It is easy to verify that $\tilde\mu$ is a well-defined finitely additive measure on $\mathcal R$.
Since $\nu$ is $\sigma$-additive and hence continuous in $\varnothing$, this also holds for 
$\tilde\mu$.
Carath\'eodory's extension theorem yields that $\tilde\mu$ can be extended to a measure $\mu$ on $\mathcal B(\mathbb R^d)^\mathbb N$.
Equation (\ref{e:mutilde}) implies that (\ref{e:infprod}) holds and hence $\mu$ is the product measure of 
$(\mu_n)_{n\in\mathbb N}$.
From Lemma \ref{p:Eigenschaften komplexer masse}(11) it follows that $|\mu|$ coincides with 
$\otimes_{n=1}^\infty |\mu_n|$ on $\mathcal R$ and hence on $\mathcal B(\mathbb R^d)^\mathbb N=\sigma(\mathcal R)$.
\end{proof}

Now we turn to infinite convolutions which roughly coincide with pushforward measures of infinite  sums of independent random variables. 
\begin{defn}
 Let $(\mu_n)_{n\in\mathbb N}$ be a sequence of complex Borel measures on $\mathbb R^d$. A complex Borel measure $\eta$  on $\mathbb R^d$ is the {\em infinite convolution} of $(\mu_n)_{n\in\mathbb N}$ if
  $$ \int f(x) \eta(dx) = \lim_{n\rightarrow\infty}\int f(x)(*_{k=1}^n\mu_k)(dx)$$
  for any bounded continuous $f:\mathbb R^d\rightarrow \mathbb R$.
\end{defn}

Observe that the infinite convolution is uniquely defined by the limiting property if it exists.
It is denoted as $*_{n=1}^\infty\mu_n$.
We give a simple criterion for its existence.
\begin{prop}\label{p:infinite convolution}
   Let $(\mu_n)_{n\in\mathbb N}$ be a sequence of complex Borel measures on $\mathbb R^d$ and define $c_n:=|\mu_n|(\mathbb R^d)$ and $a_n:=\mu_n(\mathbb R^d)$ for any $n\in\mathbb N$. Assume that $c:=\Pi_{n=1}^\infty c_n\in(0,\infty)$, that $a:=\Pi_{n=1}^\infty a_n\in\mathbb C$, that
  $\sum_{n=1}^\infty \int |v|^2 |\mu_n|(dv) <\infty$,
  and that $\int v |\mu_n|(dv) =0$, $n\in\mathbb N$.  
Then the infinite convolutions of $(\mu_n)_{n\in\mathbb N}$ as well as $(|\mu_n|)_{n\in\mathbb N}$ exist.
Moreover,
$|*_{n=1}^\infty\mu_n|\leq*_{n=1}^\infty|\mu_n|$.
\end{prop}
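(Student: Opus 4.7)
The plan is to realise both infinite convolutions as pushforward measures on $\mathbb R^d$ of the sequence-space infinite products from Proposition \ref{p:unend product compl} under the infinite summation map, and to verify almost-sure convergence of that sum by Kolmogorov's two-series theorem.

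First, the hypothesis $c=\prod c_n\in(0,\infty)$ together with existence of $a=\prod a_n$ in $\mathbb C$ is exactly what Proposition \ref{p:unend product compl} requires, so the infinite product measures $\mu:=\otimes_{n=1}^\infty\mu_n$ and $\nu:=\otimes_{n=1}^\infty|\mu_n|$ exist on $((\mathbb R^d)^{\mathbb N},\mathcal B(\mathbb R^d)^{\mathbb N})$ and satisfy $|\mu|=\nu$. Since $c\in(0,\infty)$ forces $c_n\to 1$, the normalised measure $P:=\nu/c$ is a probability, and under $P$ the coordinate projections $(\pi_n)_{n\in\mathbb N}$ are independent with marginal laws $|\mu_n|/c_n$ by the factorisation (\ref{e:infprod}). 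The hypotheses $\int v\,|\mu_n|(dv)=0$ and $\sum_n\int|v|^2|\mu_n|(dv)<\infty$, combined with $c_n\to 1$, then give $E_P(\pi_n)=0$ and $\sum_n E_P(|\pi_n|^2)<\infty$. Applying Kolmogorov's two-series theorem coordinate-wise therefore produces a measurable $S:(\mathbb R^d)^{\mathbb N}\to\mathbb R^d$ such that $S_N:=\sum_{k=1}^N\pi_k\to S$ both $P$-almost surely and in $L^2(P)$.

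Second, I would use identity (\ref{e:infprod}) to relate finite convolutions to pushforwards: for every bounded continuous $f:\mathbb R^d\to\mathbb R$,
\[
\int f\,d(*_{k=1}^N\mu_k)=\int f(x_1+\cdots+x_N)\,\mu_1(dx_1)\cdots\mu_N(dx_N)=\frac{\prod_{k=1}^N a_k}{a}\int f\circ S_N\,d\mu,
\]
and analogously $\int f\,d(*_{k=1}^N|\mu_k|)=\frac{\prod_{k=1}^N c_k}{c}\int f\circ S_N\,d\nu$. Since $\prod_{k=1}^N a_k/a\to 1$ and $\prod_{k=1}^N c_k/c\to 1$, and since $f\circ S_N$ is uniformly bounded by $\|f\|_\infty$ and tends to $f\circ S$ on a set of full $P$-measure (hence of full $\nu=|\mu|$-measure), dominated convergence for complex measures yields
\[
\int f\,d(*_{k=1}^N\mu_k)\to\int f\circ S\,d\mu,\qquad \int f\,d(*_{k=1}^N|\mu_k|)\to\int f\circ S\,d\nu.
\]
Hence both infinite convolutions exist and coincide with the pushforwards $S_{*}\mu$ and $S_{*}\nu$ respectively.

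Finally, for any $A\in\mathcal B(\mathbb R^d)$ and any finite measurable partition $(A_i)$ of $A$, the standard pushforward estimate
\[
\sum_i|S_{*}\mu(A_i)|=\sum_i|\mu(S^{-1}A_i)|\leq\sum_i|\mu|(S^{-1}A_i)=|\mu|(S^{-1}A)=S_{*}\nu(A)
\]
together with the supremum characterisation of total variation gives $|*_{n=1}^\infty\mu_n|=|S_{*}\mu|\leq S_{*}\nu=*_{n=1}^\infty|\mu_n|$. The only point requiring care is that $\mu$ is complex rather than positive, but since all almost-sure convergence is phrased with respect to the honest probability $P=|\mu|/c$ and complex dominated convergence is available via the finite dominating measure $|\mu|=\nu$, no genuine obstacle arises beyond the bookkeeping of the two normalising factors $\prod a_k/a$ and $\prod c_k/c$.
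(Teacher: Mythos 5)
Your argument is correct and follows essentially the same route as the paper: both realise $*_{n=1}^\infty\mu_n$ as the pushforward of the infinite product measure from Proposition \ref{p:unend product compl} under the limit $S$ of the partial sums of coordinates, use the centering and summable second moments to get convergence of $S_N$ under $P=\nu/c$, pass to the limit by bounded (dominated) convergence with the normalising factors $\prod_{k\le N}a_k/a\to1$, and conclude the total variation bound from $|S_*\mu|\le S_*|\mu|$. The only cosmetic difference is that you invoke Kolmogorov's two-series theorem for almost sure convergence where the paper settles for $L^2$-convergence and convergence in probability, which suffices for the same dominated convergence step.
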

\begin{proof}
The measure $P:=|\otimes_{n=1}^\infty\mu_n|/c$ is a probability measure 
and $\pi_n$ is a sequence of independent random variables relative to $P$.
Since $\lim_{n\to\infty}c_n=1$, we have
$$\sum_{n=1}^\infty \int |\pi_n|^2 dP =\sum_{n=1}^\infty c_n^{-1}\int |v|^2 |\mu_n|(dv) <\infty$$ 
and hence $(\sum_{k=1}^n\pi_k)_{n\in\mathbb N}$
 converges in  $L^2((\mathbb R^d)^{\mathbb N},\mathcal B(\mathbb R^d)^\mathbb N,P)$ 
 and hence in probability to some random variable $S$.
Set $\eta:=(\otimes_{n=1}^\infty\mu_n)^S$. For any bounded continuous function $f:\mathbb R^d\to\mathbb R$ we obtain
  \begin{align*}
   \left|\int \bigg(f(S)-f\bigg(\sum_{k=1}^n\pi_k\bigg)\bigg) d(\otimes_{n=1}^\infty\mu_n)\right| 
                       &\leq \int \bigg|f(S)-f\bigg(\sum_{k=1}^n\pi_k\bigg)\bigg| d|\otimes_{n=1}^\infty\mu_n| \\
   &=c\int \bigg|f(S)-f\bigg(\sum_{k=1}^n\pi_k\bigg)\bigg| dP \\
      &\to0\\
   \end{align*}
 for $n\to\infty$ and hence 
 \begin{align*}
   \int f(x) \eta(dx) &= \int f(S) d(\otimes_{n=1}^\infty\mu_n) \\
                       &= \lim_{n\rightarrow \infty} \int f\bigg(\sum_{k=1}^n\pi_k\bigg) d(\otimes_{n=1}^\infty\mu_n) \\    
                       &= \lim_{n\rightarrow \infty} {a\over\prod_{k=1}^na_k}\int\cdots\int f\bigg(\sum_{k=1}^nx_k\bigg) \mu_1(dx_1)\cdots\mu_n(dx_n) \\
                        &= \lim_{n\rightarrow\infty}\int f(x)(*_{k=1}^n\mu_k)(dx)
 \end{align*}
as desired for the infinite convolution.
The existence of $*_{n=1}^\infty|\mu_n|$ follows along the same lines by setting
$\gamma:=|\otimes_{n=1}^\infty\mu_n|^S$.

The last statement follows from $|(\otimes_{n=1}^\infty\mu_n)^S|\leq|\otimes_{n=1}^\infty\mu_n|^S$.
 \end{proof}

%%%\bibliographystyle{plain}
%%%\bibliography{./bib}

\end{document}